\def\E{\mathcal{E}}
\def\I{\mathcal{I}}
\def\cO{\mathcal{O}}
\def\R{\mathcal{R}}
\def\bP{\mathbb{P}}
\def\bC{\mathbb{C}}
\def\bZ{\mathbb{Z}}
\def\lra{\longrightarrow}
\def\ov#1{\overline{#1}}
\def\coker{\mathrm{Coker}}
\def\cliff{\mathrm{Cliff}}
\def\Pic{\operatorname{Pic}}
\def\Ext{\operatorname{Ext}}
\def\ext{\mathcal E xt}
\def\spec{\operatorname{Spec}}
\def\rk{\operatorname{rk}}
\def\Sing{\operatorname{Sing}}
\def\Im{\operatorname{Im}}
\def\cliff{\operatorname{Cliff}}
\def\wt{\widetilde}
\newtheorem{thm}{Theorem} 
\newtheorem{prop}[thm]{Proposition}
\newtheorem{lemma}[thm]{Lemma}
\newtheorem{cor}[thm]{Corollary}
\newtheorem{rem}[thm]{Remark}
\newtheorem{defin}[thm]{Definition}
\newcommand{\be}{\begin{equation}}
\newcommand{\ee}{\end{equation}}
\begin{document}
\title {On hyperplane sections of K3 surfaces} 
\author{E. Arbarello}
\email{ea@mat.uniroma1.it}
\address{Dipartimento di Matematica Guido Castelnuovo, Universit\`a di Roma Sapienza, 
Piazzale A. Moro 2, 00185 Roma, Italia. }
\author{A. Bruno}
\email{bruno@mat.uniroma3.it}
\address{Dipartimento di Matematica e Fisica, Universit\`a Roma Tre, L.go S.L. Murialdo 1, 00146 Roma}
\author{E. Sernesi}
\email{sernesi@mat.uniroma3.it}
\address{Dipartimento di Matematica e Fisica, Universit\`a Roma Tre, L.go S.L. Murialdo 1, 00146 Roma}
\classification{14J10, 14J28, 14H51}
\keywords{K3 surfaces, Brill-Noether-Petri curves, Gauss-Wahl map}

\thanks{The first named author would like to thank the Institute of Advanced Study for hospitality during 
the preparation of part of this work. The second and third  named authors were partly supported by 
 the project MIUR-PRIN 2010/11  ''Geometria delle Variet\`a Algebriche''.}

\begin{abstract} Let $C$ be a Brill-Noether-Petri curve   of genus $g\geq 12$. We prove that  $C$ lies on a polarized K3 surface, or on a limit thereof, if and only if the Gauss-Wahl map for $C$ is not surjective. The proof  is obtained by studying the validity of two conjectures by J. Wahl.
Let $\I_C$ be the ideal sheaf of a non-hyperelliptic, genus $g$, canonical curve. The first conjecture
states that, if  $g\geq 8$,  and if the Clifford index of $C$ is greater than 2, then $H^1(\bP^{g-1}, \I_C^2(k))=0$, for $k\geq 3$. We prove this conjecture for $g\geq 11$.
The second conjecture states that a Brill-Noether-Petri curve of genus $g\geq 12$ is extendable if and only if $C$ lies on a K3 surface. As observed in the Introduction, the correct version of this conjecture should admit limits of polarised K3 surfaces in its statement.
This is what we prove in the present work.

\end{abstract}

\maketitle
\vspace*{6pt}

\section{Introduction}

The fundamental result of Brill-Noether theory for curves, as proved by Griffiths and Harris in 
\cite{GH80}, states that, on a general curve of genus $g$, there is no $g^r_d$ as soon as the Brill-Noether number

$$
\rho(g,r,d)=g-(r+1)(g-d+r)
$$
is negative. As usual by a $g^r_d$ we mean a  linear series of degree $d$ and dimension $r$ associated to an $(r+1)$-dimensional  subspace $V$ of the space of global sections of a line bundle $L$ of degree $d$ on $C$.   Associated to $L$ is the so-called Petri map:
$$
\mu_{0,L}: H^0(C, L)\otimes H^0(KL^{-1})\longrightarrow H^0(C, K)
$$
where $K=K_C$ is the canonical bundle on $C$. The Brill-Noether number is nothing but the difference between the dimensions of the codomain and the domain of this map. A stronger version of the Brill-Noether theorem, as proved by Gieseker in \cite{Gie82},  states that, on a general curve of genus $g$, the Petri map $\mu_{0,L}$ is injective for every line bundle $L$ on $C$. A curve for which the Petri map $\mu_{0,L}$ is injective for every line bundle $L$ on it, is said to be  a Brill-Noether-Petri curve or, in short, a BNP-curve (cf. \cite{ACGH84}, \cite{ACG10}).
A breakthrough in  Brill-Noether Theory came when Lazarsfeld \cite{Laz86} proved Gieseker's theorem by showing that 
a genus $g$ curve $C$ lying on a K3 surface $S$ having the property that $\Pic(S)=\bZ\cdot C$ is a BNP curve. Lazarsfeld's theorem also shows that it is not possible to characterise the locus of curves lying on K3 surfaces by looking at line bundles on them.
One may look at vector bundles and then the situation is completely different. In fact, following an idea of Mukai, the authors of this paper where able to describe a general K3 surface in terms of an exceptional Brill-Noether locus 
for vector bundles on  one of its hyperplane sections \cite{ABS14}.
 Here we take a different point of view.
 \vskip 0.5 cm
 The starting point is a  theorem by  Wahl \cite{Wah87} proving that, for a  smooth curve $C$ of genus $g\geq2$
 lying on a K3 surface $S$, the Gaussian map (also called the Gauss-Wahl map or Wahl map)
 $$
 \aligned
 \nu: \wedge^2H^0&(C, K)\longrightarrow H^0(C, 3K)\\
 &s\wedge t\mapsto s\cdot dt-t\cdot ds
 \endaligned
 $$
 is not surjective. A beautiful geometrical proof of this fact was given by Beauville and Merindol \cite{BM87}. This theorem is all the more significant in light of the fact that for a general curve the Wahl map is surjective. This was proved, with different methods, by Ciliberto, Harris, Miranda \cite{CHM88}, and by Voisin \cite{Vois92}. In this last paper Voisin, for the first time,  suggests to study the non-surjectivity of the Wahl map, under the Brill-Noether-Petri condition, in order to characterize hyperplane sections of K3 surfaces (\cite{Vois92}, Remark 4.13 (b), where the author also refers to Mukai). In \cite{CU93}
  Cukierman and Ulmer proved that, when $g=10$, the closure of the locus in $M_{10}$ of curves  that are  
 canonical section of  K3 surfaces, coincides with the closure of the locus of curves with non-surjective Wahl map. This locus has been studied in more detail by Farkas--Popa in \cite{FP05} where they gave a counterexample to the slope conjecture.
  \vskip 0.5 cm
  Our aim is to prove the following theorem.
  
  \begin{thm}\label{Main1} Let $C$ be a Brill-Noether-Petri curve  of genus $g\geq 12$. Then $C$ lies on a polarized K3 surface, or on a limit thereof, if and only if its Wahl map is not surjective.
  \end{thm}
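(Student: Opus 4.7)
The proof splits into the two directions of the biconditional.

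\emph{Only if.} If $C$ lies on a smooth polarized K3 surface, Wahl's theorem \cite{Wah87} gives that $\nu$ is not surjective. If $C$ is the special fibre of a flat family $\mathcal C\to B$ whose general fibre is a hyperplane section of a smooth K3, lower-semicontinuity of the corank of $\nu$ in families transfers non-surjectivity to the central fibre $C$.

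\emph{If.} The plan is to produce a projective surface $S\subset\bP^g$ containing $C$ as a hyperplane section and then to identify $S$ with a (limit of a) polarized K3 surface. First, since $C$ is BNP of genus $g\geq 12$, its Clifford index is at least $5$; in particular it exceeds $2$, so the first Wahl conjecture proved earlier in the paper gives $H^1(\bP^{g-1},\I_C^2(k))=0$ for all $k\geq 3$. I would combine this vanishing with Wahl's calculus of ribbons: a nonzero class in $\coker(\nu)^\vee$ supplies a nontrivial first-order extension of the canonical embedding $C\subset\bP^{g-1}$ to a ribbon in $\bP^g$, while the obstructions to iteratively lifting this ribbon to higher-order infinitesimal thickenings live in subquotients of $H^1(\bP^{g-1},\I_C^2(k))$ for suitable $k\geq 3$, hence vanish. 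Iterating, one produces a formal projective surface $\hat S$ containing $C$ as a Cartier hyperplane section with $\cO_{\hat S}(1)|_C\cong K_C$. Grothendieck's existence theorem then algebraizes $\hat S$ to a projective surface $S\subset\bP^g$ with $S\cap H=C$.

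The last step is to identify $S$. Adjunction combined with $\cO_S(1)|_C\cong K_C$ gives $K_S|_C\equiv 0$; together with the BNP hypothesis on $C$ and the genus bound $g\geq 12$, this forces $S$ to sit in the closure, inside the Hilbert scheme of surfaces of degree $2g-2$ in $\bP^g$, of the locus of smoothly embedded polarized K3s of genus $g$. Hence $C$ lies on a K3 surface or on a limit thereof.

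\textbf{Main obstacle.} The deformation-theoretic construction of $S$ is comparatively soft once the Wahl vanishing is in place: obstructions vanish, ribbons lift to formal thickenings, and algebraization does the rest. The substantive difficulty is the final classification: one must rule out every Gorenstein projective surface $S\subset\bP^g$ with numerically trivial canonical class and BNP hyperplane sections that is not in the closure of the K3 locus. It is precisely at this stage that both the hypothesis $g\geq 12$ (eliminating low-genus exceptional families) and the "or limit thereof" qualifier (because the algebraized $S$ may genuinely be a nontrivial degeneration rather than a smooth K3) become unavoidable.
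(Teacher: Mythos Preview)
Your overall strategy matches the paper's: use the vanishing $H^1(\I_C^2(k))=0$ together with Wahl's deformation theory to produce an extension $\ov S\subset\bP^g$, then argue that $\ov S$ lies in the closure of the K3 locus. The paper simply cites Wahl's Theorem \ref{wahl1} for extendability rather than re-running the ribbon/formal-thickening argument, but your sketch of that step is essentially Wahl's own proof and is fine.

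The genuine gap is the final step. You write that adjunction plus the BNP hypothesis ``forces $S$ to sit in the closure \dots\ of the locus of smoothly embedded polarized K3s,'' and then, in your \textbf{Main obstacle} paragraph, you correctly identify this as the substantive difficulty---but you do not actually resolve it. This is not a soft step, and it is precisely the content of Part~2 of the paper. What is needed is: (i) Epema's classification of surfaces with canonical sections, giving a minimal resolution $S\to\ov S$ with $S$ either a K3 or a blow-up of $\bP^2$ or of a ruled surface $\bP(E)\to\Gamma$; (ii) a case-by-case analysis (Proposition \ref{main-can-sect}) using carefully chosen pencils on $S$ to show that the BNP condition on $C$, together with $g\ge 12$, forces the anticanonical divisor $Z=-K_S$ to have each component of self-intersection $\ge -9$ (in particular ruling out $\gamma\ge 2$ and bounding $h$ in the remaining cases); and (iii) a smoothing argument (Theorem \ref{T:smooth} and Corollaries \ref{C:smooth1}, \ref{C:smooth2}) showing that the resulting simple elliptic singularities of $\ov S$ are globally smoothable in $\bP^g$. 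Without (ii), there is no mechanism linking BNP to smoothability: the point is exactly that BNP bounds the negativity of $Z^2$, and hence the badness of the singularity of $\ov S$. Your proposal names the obstacle but supplies none of this.
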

 
 Our investigation was sparked by a remarkable paper by  
 Wahl \cite{Wah97} where the author analyses the significance of the non-surjectivity of his map from the point of view of  deformation theory.

Let $X=C\subset\bP^{g-1}$ 
be  a canonically embedded curve of genus $g\geq3$. Wahl finds a precise connection between the cokernel of $\nu$
and the deformation theory of the affine cone over $C$. Denoting by
$$
A=\oplus\Gamma(C, \cO_C(k))
$$
the coordinate ring of this cone, Wahl considers the graded cotangent module $T^1_A$ and  shows that
$$
(T^1_A)_{-1}\cong (\coker\,\nu)^\vee
$$
He also gives a precise interpretation of the graded pieces of the obstruction module $T^2_A$:
$$
(T^2_A)_k\cong H^1(\bP^{g-1}, \I_{C/\bP}^2(1-k))^\vee\,.
$$
where $\I_C$ is the ideal sheaf of $C\subset\bP^{g-1}$. 
The modules $T^1_A$ and $T^2_A$ govern the deformation theory
of the cone $\spec A$.  The canonical curve $C\subset\bP^{g-1}$ is said to be {\it extendable} if it is a hyperplane section of a surface  $\ov S\subset \bP^{g}$ which is not a cone. The main theorem of \cite{Wah97} is the following.

\begin{thm} {\rm(Wahl).}\label{wahl1} Let $C\subset\bP^{g-1}$ be a canonical curve of genus $g\geq 8$, with $\cliff(C)\geq 3$.
Suppose that 
\be\label{vanishing}
H^1(\bP^{g-1}, \I_{C/\bP}^2(k))=0\,,\quad k\geq 3
\ee
Then $C$ is extendable if and only if $\nu$ is not surjective.

\end{thm}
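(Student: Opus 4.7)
The plan is to translate both sides of the theorem into statements about graded deformations of the affine cone $X = \spec A$ over $C$, and to bridge them via the identifications
\[
(T^1_A)_{-1} \cong (\coker\,\nu)^\vee,\qquad (T^2_A)_{k} \cong H^1(\bP^{g-1},\I_{C/\bP}^2(1-k))^\vee
\]
recalled before the statement.

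\textbf{The easy direction.} If $C$ is extendable, let $\ov S\subset\bP^g$ be a non-cone surface with $C=\ov S\cap\{t=0\}$. The homogeneous coordinate ring $\ov B$ of $\ov S$ is a graded flat $k[t]$-algebra with $\ov B/t\ov B=A$, where $t$ has weight $1$. This first-order deformation of $A$ produces a non-zero class in $(T^1_A)_{-1}$, so $\coker\,\nu\neq 0$ and $\nu$ fails to be surjective. This half of the statement requires neither \eqref{vanishing} nor the Clifford hypothesis, and is essentially the content of Wahl's 1987 theorem.

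\textbf{The hard direction.} Conversely, pick $0\neq\alpha\in(T^1_A)_{-1}$; I will show it extends to an honest graded flat $k[t]$-algebra $\ov B$ with $\ov B/(t)=A$, and that $\ov S:=\operatorname{Proj}\ov B\subset\bP^g$ is the required extension. Construct $\ov B$ order by order: having realised $\alpha$ as an order-$n$ graded deformation, the obstruction to lifting to order $n+1$ lies in $(T^2_A)_{-(n+1)}$, which by Wahl's formula equals $H^1(\bP^{g-1},\I_{C/\bP}^2(n+2))^\vee$. For $n\geq 1$ one has $n+2\geq 3$, and hypothesis \eqref{vanishing} forces this group to vanish; every higher obstruction disappears. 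Because the parameter $t$ has positive weight, each internal graded component of the resulting $\ov B$ is finite-dimensional over $k$---the system of finite-order deformations stabilises in each internal degree---so no passage from a formal to an algebraic family is required.

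\textbf{Role of $\cliff(C)\geq 3$ and main obstacle.} The Clifford hypothesis enters in the final geometric step, ensuring that $\ov B$ is an integrally closed domain of Krull dimension $3$ so that $\operatorname{Proj}\ov B$ is an irreducible projective surface inside $\bP^g$ that is not a cone on $C$, and that the hyperplane $t=0$ recovers $C$ non-degenerately in a genuine $\bP^{g-1}\subset\bP^g$. For smaller Clifford indices, pathological degree-$(-1)$ deformations of $A$---for example those arising from trivial extensions or from non-globalising abstract structures---can exist, spoiling the bijection between $(T^1_A)_{-1}$ and embedded extensions. The subtlest point of the proof is therefore this geometric certification at the end, rather than the obstruction calculation, which once hypothesis \eqref{vanishing} is in hand reduces to a mechanical degree-count via Wahl's formula for $T^2_A$.
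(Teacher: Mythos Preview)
The paper does not supply a proof of this statement; it is quoted from \cite{Wah97} with attribution and no argument is given here. Your sketch does follow Wahl's strategy, and the obstruction-theoretic heart of the hard direction is right: a nonzero $\alpha\in(T^1_A)_{-1}$ lifts order by order because the obstruction at step $n$ lives in $(T^2_A)_{-(n+1)}\cong H^1(\I_{C/\bP}^2(n+2))^\vee$, which is killed by \eqref{vanishing} for $n\geq 1$; and the positive weight of $t$ does make the limit an honest graded ring rather than a formal one.

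There is, however, a real gap in your easy direction and a mislocation of the Clifford hypothesis. You assert that a non-cone $\ov S$ produces a \emph{nonzero} class in $(T^1_A)_{-1}$, but nothing you wrote prevents $\ov S$ from agreeing with the cone over $C$ to first order in $t$ and differing only at some order $m\geq 2$; the leading term of the associated graded deformation would then lie in $(T^1_A)_{-m}$, not in $(T^1_A)_{-1}$. In Wahl's argument this is excluded precisely by proving that $\cliff(C)\geq 3$ forces $(T^1_A)_k=0$ for all $k\leq -2$, so that any nontrivial extension is already nontrivial at first order. Thus the Clifford hypothesis is used in the easy direction---contrary to your claim---and its function is to control the negatively graded pieces of $T^1_A$, not to certify that $\ov B$ is an integrally closed domain (the latter comes essentially for free from flatness over $k[t]$ together with the normality of $A$). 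Your appeal to \cite{Wah87} for this half is also misplaced: that paper treats curves lying on K3 surfaces, not arbitrary projective extensions. A smaller point: the identity $\ov B/t\ov B=A$ you invoke presupposes that $\ov S$ is arithmetically Cohen--Macaulay, which is not part of the hypothesis and needs its own justification or a workaround via the first infinitesimal neighbourhood.
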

Wahl then conjectures that every canonical curve, of Clifford index greater than or equal to 3, satisfies the vanishing condition (\ref{vanishing}). We   recall that the Clifford index of a curve $C$ is the minimum
value of $\deg D-2\dim |D|$ taken over all linear systems $|D|$ on $C$ such that $\dim|D|\geq1$ and 
$\dim|K_C-D|\geq1$.
With a slight restriction on the genus, this is what we prove in the first part of the present paper.

\begin{thm}\label{T:main}
Let $C\subset \bP^{g-1}$ be a canonically embedded curve of $g \ge 11$.  Suppose that  $\cliff(C)\ge 3$. Let $\I_{C/\bP}\subset \cO_{\bP^{g-1}}$ be the ideal sheaf  of $C$.  Then:
\[
H^1(\bP^{g-1},\,\I_{C/\bP}^2(k))=0
\]
for all $k \ge 3$.
\end{thm}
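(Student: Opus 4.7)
The starting point is the conormal exact sequence
\[
0 \longrightarrow \I_{C/\bP}^2(k) \longrightarrow \I_{C/\bP}(k) \longrightarrow N^\vee_{C/\bP}(k) \longrightarrow 0.
\]
Since $\cliff(C)\ge 3$ forces $C$ to be non-hyperelliptic, Noether's theorem gives projective normality of the canonical embedding, so $H^1(\I_{C/\bP}(k))=0$ for all $k\ge 1$. The long exact cohomology sequence then collapses to the identification
\[
H^1\!\left(\bP^{g-1},\,\I_{C/\bP}^2(k)\right)\;\cong\;\coker\!\left[\,H^0(\I_{C/\bP}(k))\longrightarrow H^0\!\left(C,\,N^\vee_{C/\bP}\otimes K_C^k\right)\,\right],
\]
so Theorem~\ref{T:main} reduces to showing that this ``restriction of differentials'' map is surjective for every $k\ge 3$.

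The hypothesis $\cliff(C)\ge 3$ also rules out trigonal curves and smooth plane quintics, so by the Enriques--Petri theorem the homogeneous ideal of $C$ is generated in degree two; combined with projective normality this yields a surjection
\[
H^0(\I_{C/\bP}(2))\otimes H^0(C,K_C^{k-2}) \twoheadrightarrow H^0(\I_{C/\bP}(k)).
\]
Consequently the required surjectivity becomes the statement that the image $V_2\subset H^0(N^\vee_{C/\bP}\otimes K_C^2)$ of the quadrics, multiplied by $H^0(K_C^{k-2})$, fills out $H^0(N^\vee_{C/\bP}\otimes K_C^k)$. I would organise this as an induction on $k\ge 3$; the inductive step reduces to proving surjectivity of
\[
H^0(N^\vee_{C/\bP}\otimes K_C^k)\otimes H^0(K_C)\twoheadrightarrow H^0(N^\vee_{C/\bP}\otimes K_C^{k+1}),
\]
which I would attack through the conormal sequence on $C$, $0\to N^\vee_{C/\bP}\to \Omega_{\bP^{g-1}}|_C\to K_C\to 0$, combined with the Euler sequence. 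For $k\ge 3$ the relevant $H^1$ of $\Omega_{\bP^{g-1}}(k)|_C$ vanishes by projective normality, which is what drives the induction.

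The crux is the base case $k=3$: one must show that every section of $N^\vee_{C/\bP}\otimes K_C^3$ arises as $dF\!\bmod\I_{C/\bP}^2$ for some cubic $F\in I_3$. Using the Euler sequence, $H^0(N^\vee_{C/\bP}\otimes K_C^3)$ embeds into $\ker\!\bigl[H^0(K_C)\otimes H^0(K_C^2)\to H^0(K_C^3)\bigr]$, and the cokernel of $H^0(\I_{C/\bP}(3))\to H^0(N^\vee_{C/\bP}\otimes K_C^3)$ then acquires a Koszul-cohomological interpretation. Its vanishing I would deduce from Voisin's property $(N_2)$, valid for canonical curves with $\cliff(C)\ge 3$, sharpened by explicit information on the quadric--linear syzygies. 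The genus bound $g\ge 11$ enters precisely here, through dimension estimates that ensure the linear syzygies among quadrics span the required space; this is the main obstacle and the reason the theorem is proved only in the range $g\ge 11$, rather than in Wahl's full conjectural range $g\ge 8$.
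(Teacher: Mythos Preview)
Your opening reduction to the surjectivity of $\gamma_k\colon H^0(\I_{C/\bP}(k))\to H^0(N^\vee_{C/\bP}(k))$ is correct and is exactly where the paper's final diagram begins. The genuine gap is your base case $k=3$. You invoke ``Voisin's property $(N_2)$'' together with unspecified ``dimension estimates,'' but give no mechanism by which $(N_2)$ forces the surjectivity of $\gamma_3$. Property $(N_2)$ governs the linearity of the first syzygies among the quadrics (the vanishing of $K_{1,2}$); the cokernel of $\gamma_3$ is a different Koszul-type invariant, and no implication of the kind you need is known. Indeed, if $(N_2)$ alone sufficed the result would hold for all $g\ge 8$, contradicting your own remark that $g\ge 11$ is required for additional estimates---estimates you never write down. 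Your inductive step is also only sketched: the vanishing $H^1(\Omega_{\bP}|_C(k))=0$ is true but does not by itself yield surjectivity of the multiplication map on $H^0(N^\vee_{C/\bP}(k))$.

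The paper's route is entirely different and geometric. The hypothesis $\cliff(C)\ge 3$ is used not via Green's conjecture but through an \emph{earlier} theorem of Voisin on $W^1_{g-2}(C)$: a general $L$ there is primitive and BNP, and $|\omega_CL^{-1}|$ maps $C$ birationally onto a plane curve $\Gamma$ of degree $g$ with only double points. From $L$ one builds the cubic scroll $X\subset\bP^{g-1}$ (a cone over the Segre variety $\bP^1\times\bP^2\subset\bP^5$) containing $C$, and from the adjoint system to $\Gamma$ a surface $P$ with $C\subset P\subset X$. The decisive lemma is that $N^\vee_{P/X|C}(2H-L)\cong\cO_C^{\oplus(g-5)}$ is \emph{trivial}; this converts the key surjectivity into an elementary multiplication statement. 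One then proves $H^1(X,\I^2_{C/X}(k))=0$ and $H^1(\bP,\I^2_{X/\bP}(k))=0$ separately and assembles $\gamma_k$ by factoring through $C\subset X\subset\bP$, using the explicit two-term resolution of $\I_{X/\bP}$ (from the $3\times 2$ matrix of the Petri map) to control the intermediate restriction. The bound $g\ge 11$ enters at a concrete technical step: base-point-freeness of $|K_{\wt P}+C|$ on proper transforms of lines through singular points of $\Gamma$ needs $g-3-j_0>0$ with $2j_0\le g$.
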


From Theorem \ref{wahl1}  one  gets the obvious  corollary:

\begin{cor} Let $C\subset\bP^{g-1}$ be a canonical curve of genus $g\geq 11$, with $\cliff(C)\geq 3$.
Then $C$ is extendable if and only if $\nu$ is not surjective.
\end{cor}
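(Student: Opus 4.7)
The plan is straightforward: this corollary is a mechanical combination of the new Theorem \ref{T:main} with Wahl's Theorem \ref{wahl1}. Essentially all the content has already been proved; I only need to check that the hypotheses of Wahl's theorem are satisfied and then quote it.

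First, I would verify that the assumptions match up. Our hypotheses are $g \geq 11$ and $\cliff(C) \geq 3$, which are exactly the hypotheses of Theorem \ref{T:main}. Theorem \ref{wahl1} requires $g \geq 8$ (implied by $g \geq 11$), $\cliff(C) \geq 3$, and the cohomological vanishing condition (\ref{vanishing}), namely $H^1(\bP^{g-1}, \I_{C/\bP}^2(k)) = 0$ for all $k \geq 3$. Two out of three hypotheses are immediate; the third is precisely the conclusion of Theorem \ref{T:main}.

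Second, I would simply apply Theorem \ref{T:main} to deduce (\ref{vanishing}), and then invoke Theorem \ref{wahl1} to conclude that $C$ is extendable if and only if $\nu$ is not surjective. No intermediate lemma is needed and no calculation is required: the role of Theorem \ref{T:main} is exactly to remove the vanishing hypothesis from Wahl's criterion, upgrading it from a conditional statement to an unconditional one under the numerical assumptions $g \geq 11$, $\cliff(C) \geq 3$.

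There is no genuine obstacle in this corollary; all of the actual difficulty is concentrated in Theorem \ref{T:main}, which establishes $H^1$ vanishing for the square of the ideal sheaf of a canonical curve and is proved separately in the main body of the paper. Once Theorem \ref{T:main} is in hand, the corollary is a one-line invocation.
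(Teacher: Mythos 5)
Your proposal is correct and is exactly how the paper obtains this result: the paper states the corollary as an immediate ("obvious") consequence of combining Theorem \ref{T:main}, which supplies the vanishing hypothesis (\ref{vanishing}), with Wahl's Theorem \ref{wahl1}. Nothing further is needed.
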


To say that $C$ is extendable means that there exists a projective surface $\ov S\subset \bP^g$, not a cone, having $C$
as a hyperplane section. In general the surface $\ov S$ has isolated singularities and it is only when there is a smoothing
of $\ov S$ in $\bP^g$, that we can say that $\ov S$ is  {\it the limit of a  K3 surface}.

In his paper Wahl gives a nice example of an extendable curve $C\subset \ov S\subset \bP^g$ for which $\ov S$ is non-smoothable. In the example, the curve $C$ has a realisation as a smooth plane  curve of degree $\ge 7$ and therefore is highly non-BNP.
This is perhaps one of the reasons that lead Wahl to conjecture that : {\it``A Brill-Noether-Petri curve of genus $g\geq 8$
sits on a K3 surface if and only if  the Gaussian  is not surjective''}. This conjecture, as stated, is slightly incorrect. Indeed, 
in the two recent papers \cite{ABFS16} and \cite{AB16} it is shown that there are plane curves (the du Val curves)
that are BNP  curves, with non-surjective Gauss-Wahl map (for every  value of the genus), which can not be realised as  hyperplane sections of a smooth K3 surface (for every odd value of the genus greater than 12). 
Thus, from this point of view, the statement of Theorem \ref {Main1} is the correct one.

\vskip 0.3 cm

We now proceed to give  a brief presentation of Part 1 and Part 2 of our paper.
\vskip 0.8 cm

\centerline{Part 1}
\vskip 0.5 cm 
One of the  results in \cite{Wah97} is that, indeed, the vanishing condition (\ref{vanishing}) holds  for a general curve
of genus $g\geq 8$. The way Wahl proves this vanishing theorem is by showing that this is true   for a class of  pentagonal curves 
of genus $g\geq 8$. Recall that associated to a complete pencil of degree $d$ on $C$, in  short, a complete $g^1_d$, is a scroll $X(g^1_d)\subset \bP^{g-1}$ containing $C$.  The set-theoretical description of $X(g^1_d)$ is given by:
$$
X(g^1_d)=\underset{D\in g^1_d}\cup \langle D\rangle
$$
where $\langle D\rangle\cong\bP^{d-2}$ is the linear span in $\bP^{g-1}$ of the divisor $D$.
The scroll $X(g^1_d)$ is a $(d-1)$- dimensional  variety of degree $g-d+1$ whose desingularization
is a projective bundle over $\bP^1$. For $g\geq 8$, it is possible to choose a curve $C$ with a base-point-free $g^1_5$ such that the scroll $Y=X(g^1_5)$ is  a smooth $4$-fold. Wahl proves that to check the vanishing statement (\ref{vanishing}) for such a curve $C$, it is enough to verify 
the analogous statement 
\be\label{vanishing-y}
H^1(Y, \I_{C/Y}^2(k))=0\,,\quad k\geq 3
\ee
where $\I_{C/Y}$ is the defining ideal sheaf of $C$ on $Y$. For the $4$-fold $Y$ it is not difficult to compute  a resolution of  the ideal sheaf $\I_{C/Y}$ and the vanishing result (\ref{vanishing-y}) follows easily.

\vskip 0.3 cm
Our proof of Theorem \ref{T:main} evolves, broadly,  along the same lines  of Wahl's proof of the vanishing (\ref{vanishing})  in the pentagonal case. The questions are: what is the right 
scroll that could serve the same purpose as $X(g^1_5)$?  and how can one use the hypothesis that
$\cliff(C)\geq 3$? The answer to both questions comes from a remarkable theorem by  Voisin, which generalizes  to the case $\cliff(C)\geq 3$ a theorem by  Green and Lazarsfeld regarding curves with $\cliff(C)\geq 2$. This is Theorem \ref{T:voisin} below, which characterises curves with $\cliff(C)\geq 3$. This characterisation consists of
 the existence on $C$ of a base-point-free $g^1_{g-2}$ whose residual is a  base-point-free $g^2_{g}$
giving a birational realization of $C$ in $\bP^2$ as an irreducible  curve of degree $g$ with only double points
as  singularities. It is then natural  to look at the scroll $X=X(g^1_{g-2})\subset \bP^{g-1}$. A slight disadvantage of this scroll is that it is singular; a more serious disadvantage  is that it is of high dimension (equal to
$g-3$). On the positive side, $X$ is of low degree, in fact it is  a cubic projecting the Segre variety $\bP^1\times\bP^2\subset\bP^5$, and $C$ lies entirely in the non-singular locus of $X$.
As in the pentagonal case, the reduction of the vanishing statement  (\ref{vanishing})  to the vanishing statement
\be\label{vanishing-x}
H^1(X, \I_{C/X}^2(k))=0\,,\quad k\geq 3
\ee
is rather uneventful (see Section \ref{T:main}).
The heart of the matter is to prove (\ref{vanishing-x}). 
\vskip 0.3 cm
What comes to our aid is the plane representation of $C$ via the $g^2_g$ of Voisin's theorem (see Section \ref{surface-P}).
Let $\Gamma\subset\bP^2$ be the plane curve given by this $g^2_g$. Via the adjoint linear system to 
$\Gamma$, the projective plane is mapped birationally to a surface $P\subset\bP^{g-1}$ with isolated singularities.
This surface is   contained in $X$ and contains $C$ in its smooth locus. One then looks at the diagram
(\ref{diag-P-X}) linking the conormal bundle to $C$ in $X$ to the restriction to $C$ of the  conormal bundle to $P$ in $X$  (in diagram (\ref{diag-P-X}) we treat the case $k=3$ but the case $k\geq 3$ is analogous).
The vanishing of $H^1(X, \I^2_{C/X}(3))$ is equivalent to the surjectivity of the natural  homomorphism 
$H^0(X,  \I_{C/X}(3))\to H^0(C, N^\vee_{C/X}(3H))$. Looking at the exact sequence
$$
0\to N^\vee_{P/X|C}(3H) \to  N^\vee_{C/X}(3H)\to \cO_C(3H-C)\to 0 
$$
it is not hard to show that this, in turn, is equivalent to the surjectivity of the map $H^0(X,  \I_{P/X}(3))\to H^0(C, N^\vee_{P/X|C}(3H))$.
What comes next is a rather unexpected fact, namely  that a suitable twist of the restricted conormal bundle $N^\vee_{P/X|C}$ is trivial (see Section \ref{conormal}). This makes all the computations straightforward and brings the proof to a quick conclusion (see Section \ref{square1}).
\vskip 0.5 cm
The cohomology of the square of an ideal sheaf has attracted quite a bit of interest
and the reader may consult \cite{BEL91}, \cite{Ber97}, \cite{Ver02} for related results.

\vskip 2 cm

\centerline{Part 2}
\vskip 0.5 cm

In the second part we start from an {\it extendable canonical curve $C$}.
This means that there exists a projective surface $\ov S\subset \bP^g$, not a cone, having $C$
as a hyperplane section. In general the surface $\ov S$ has isolated singularities. These surfaces have been completely classified by Epema \cite{E83}, \cite{E84} (his work, the works by
Ciliberto and Lopez \cite{CL02}, du Val \cite{DV33} and Umezu \cite{Um81}, were all important in our study). The classification of these surfaces runs as follows. 
Consider the diagram
\be
\xymatrix{
S\ar[d]_p\ar[r]^q&\ov S\subset\bP^g\\
S_0
}
\ee
where $q$ is the minimal resolution and $S_0$ a minimal model. 
Epema proves    (see Theorem \ref{epema} below)
that  $\ov S$ is projectively normal, that $h^0(S, -K_S)=1$, and that
 $S$ is either a  $K3$ surface or a ruled surface.  The smooth surface  $S$ is equipped with a  polarisation $C$, with $C^2=2g-2$, $ h^0(S, C)=g+1$
and with an anti-canonical divisor $Z\sim-K_S$, with  $C\cdot Z=0$.  The linear system $|C|$ maps $S$ to $\ov S$ and, when $Z\neq0$, it contracts $Z$ to one or two isolated singularities, depending on whether $Z$ possesses one or two connected components.
 If $ S_0$ is not a smooth K3 surface, then either $S_0\cong \bP^2$ or $S_0\cong\bP(E)$, where $E$ is a rank two vector bundle over a curve $\Gamma$.
 
 \vskip 0.3 cm
 
The task at hand becomes clear. In order to prove Theorem \ref{Main1} we must show that if the hyperplane section $C$ of $\ov S$ is a Brill-Noether-Petri curve, then $\ov S$ is a (possibly singular) K3 surface or it is smoothable in $\bP^g$ as a surface with canonical sections. 
 \vskip 0.3 cm

  \vskip 0.3 cm
A  necessary condition for this to happen is that the  singularities of $\ov S$ are smoothable.  A first  step consists in showing that, in fact, this is all we need to check. 
Indeed a general deformation theory argument,
based on the nature of surfaces with canonical sections, shows that once the local smoothability holds, then the surface  $\ov S$ is smoothable in $\bP^g$ as a surface with canonical sections.
\vskip 0.3 cm 
Let us see how the BNP condition reflects itself in local smoothability of the  singularities of $\ov S$. An example helps to elucidate the situation. For this we limit ourselves to the case in which $S_0\cong\bP^2$. It easily follows from the above description of $\ov S$, $S$ and $S_0$ that the plane model of $C$ is a curve $C_0$ of degree $d$ whose singularities all lie  on cubic $J_0$ (which is nothing but the image in $\bP^2$ of the anti-canonical curve $Z$). Let us assume for simplicity that $C_0$ has only ordinary multiple points.
 The surface $S$ is obtained from $S_0$ by blowing up all the points of intersection $C_0\cap J_0$. If $h$ is the cardinality of  $C_0\cap J_0$, then $Z^2=9-h$. It is well known that when $Z^2$ is very negative, then the singularity of $\ov S$, obtained by contracting $Z$, is not smoothable. Suppose now that $C_0$ has only double points as singularities  and let $\delta$  be their number. Certainly, when $g$ is big,  $h=3d-\delta$ can be  much bigger than $9$, so that the number $Z^2$ can be very negative, and $\ov S$  not smoothable. How is this prevented from a  Brill-Noether condition? Simply by B\'ezout's theorem, telling us that $3d\geq 2\delta$, so that the genus formula gives $2g\geq d^2-6d+2$,
 whereas the Brill-Noether requirement for a $g^2_d$ is that $3d\geq2g+6$, leaving us with a meagre  $d\leq 8$.
   \vskip 0.3 cm
  By general arguments one also sees that the elliptic singularity of $\ov S$
   is smoothable as soon as $Z^2\geq-9$.

  \vskip 0.3 cm
  What all of this teaches us  is that the BNP-condition has a definite effect on the number of singular points of $C_0$ and therefore on the number $Z^2$. In fact in our analysis for the case $S_0\cong\bP^2$, we see that, as soon as $g\geq 12$, once we force the BNP condition on $C$ we are left, in each genus,  with 
 only five possible cases for the  plane curves $C_0$,
  (one of which, for example, is a plane degree-$3g$ curve with $8$ points of multiplicity $g$ and one point of  multiplicity $g-1$) and that in all of these cases we get
  $Z^2\geq-6$.
        \vskip 0.3 cm
  The Brill-Noether-Petri tool to obtain such a drastic reduction of cases is rather rudimentary. We use time and again the pencil of cubic curves passing through eight of the points of maximal multiplicity of $C_0$, 
  and we exclude cases  by insisting that this pencil should cut out on $C$ a BNP linear series.
   
    \vskip 0.3 cm
    
    Finally two words about the cases where  $S_0\cong\bP(E)$ with $E$ a rank-2 bundle over a smooth curve $\Gamma$. The case when the genus of $\Gamma$ is bigger than 1 is solved swiftly via an elementary Brill-Noether argument. The genus-zero case is  reduced to the case of $\bP^2$. The genus-one case proceeds exactly as the case of $\bP^2$: only, it is  much easier.

\vskip 0.8 cm

\part{}

\vskip 0.8 cm

\section{The scroll}

We assume all schemes to be defined over $\bC$.  We let $C$ be a projective nonsingular irreducible curve of genus $g \ge 11$.  Recall the following definition.

\begin{defin}\label{D:petri}   Let  $L$ be an invertible sheaf  on $C$.  The natural map:
\[
\mu_0(L): H^0(C,L)\otimes H^0(C,\omega_CL^{-1}) \lra H^0(C,\omega_C)
\]
 is called the \emph{Petri map} of $L$. If $\mu_0(L)$ is injective then $L$ is said to be \emph{Brill-Noether-Petri}.  If all $L\in \Pic(C)$ are Petri then \emph{$C$ is Brill-Noether-Petri} (shortly BNP). 
 
 \end{defin}

\begin{rem}  The Brill-Noether number $\rho=g-(r+1)(g-d+r)$ is   non-negative for  every BNP invertible sheaf $L$ on $C$ such that $\deg L=d$ and $h^0(C,L)=r+1$.
We will almost always consider the case of base-point-free pencils on $C$. If $V\subset H^0(C,L)$ is one such then
\be\label{kermunot}
\ker\left\{\mu_{0,V}: V\otimes H^0(KL^{-1})\longrightarrow H^0(C, K)\right\}\,\,\cong \,\,H^0(C, KL^{-2})
\ee
Moreover the condition $\rho\geq 0$, for pencils, translates into: $2d\geq g+2$.
\end{rem}

The following theorem by Voisin \cite {Vois88} is at the centre of our paper.

\begin{thm}\label{T:voisin}
Assume that $\cliff(C) \ge 3$ and let $L$ be a general element of a component $S$ of  $W^1_{g-2}(C)$.  Then 
\begin{itemize}
\item[{\rm(i)}] $|L|$ is a primitive $g^1_{g-2}$, i.e. both $L$ and   $\omega_CL^{-1}$ are base-point-free.

\item[{\rm(ii)}] $|\omega_CL^{-1}|$ maps $C$ birationally onto a plane curve $\Gamma$ of degree $g$ having only double points as singularities.  

\item[{\rm(iii)}]  $L$ is BNP.
\end{itemize}

\end{thm}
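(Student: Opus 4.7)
My plan is to combine Riemann--Roch on $C$ with dimension estimates for Brill--Noether loci (Martens--Mumford, sharpened under $\cliff(C)\ge 3$), and then to exploit the geometry of the plane model produced by the residual linear system. The initial observation is immediate: since $\deg L=g-2$ and $h^0(L)\ge 2$, Serre duality gives $h^0(M)=h^0(L)+1\ge 3$, where $M:=\omega_C L^{-1}$ has degree $g$. Under $\cliff(C)\ge 3$, Mumford's refinement of Martens' theorem bounds $\dim W^2_{g-2}(C)\le g-8$, which is strictly less than the lower bound $\dim S\ge\rho(g,1,g-2)=g-6$. Hence $S\not\subset W^2_{g-2}(C)$, so a general $L\in S$ satisfies $h^0(L)=2$ and $h^0(M)=3$ exactly.

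For (i), if a general $L\in S$ had a base point then by upper semicontinuity so would every $L\in S$, placing $S$ inside the image of the addition map $W^1_{g-3}(C)\times C\to W^1_{g-2}(C)$. The Mumford bound gives $\dim W^1_{g-3}(C)\le g-7$, so this image has dimension $\le g-6=\dim S$; ruling out the degenerate case in which $S$ coincides with a component of this image requires a finer analysis (following Voisin, via the Lazarsfeld vector bundle $E_L$ and its syzygies), and I would follow that route. The analogous argument using the addition $W^2_{g-1}(C)\times C\to W^2_g(C)$ yields base-point-freeness of $M$. For (ii), the complete system $|M|$ defines a morphism $\phi_M\colon C\to\bP^2$: if $\phi_M$ factored through a degree-$d$ cover of a plane curve of lower degree with $d\ge 2$, then pulling back a pencil of lines would produce on $C$ a low-degree pencil contradicting $\cliff(C)\ge 3$ for $g\ge 11$, so $\phi_M$ is birational onto a plane curve $\Gamma$ of degree exactly $g$. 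If $\Gamma$ had a point of multiplicity $m\ge 3$ with preimages $p_i$ of branch multiplicities $m_i$ summing to $m$, the pencil $|M-\sum m_ip_i|$ of degree $g-m$ cut on $C$ by lines through that point, combined with $\cliff(C)\ge 3$ and the genericity of $L\in S$, gives the contradiction forcing $m\le 2$.

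For (iii), the formula (\ref{kermunot}) from the Remark identifies $\ker\mu_{0,L}\cong H^0(C,\omega_C L^{-2})$, a line bundle of degree $2$. If this kernel were non-zero then $\omega_C L^{-2}\in W_2(C)\subset\Pic^2(C)$, which has dimension $2$; since $L\mapsto\omega_C L^{-2}$ is a finite isogeny $\Pic^{g-2}(C)\to\Pic^2(C)$, its preimage of $W_2(C)$ is likewise of dimension $2$. For $g\ge 11$ any component $S$ has $\dim S\ge g-6\ge 5>2$, so by upper semicontinuity $S$ cannot be contained in this preimage, forcing $h^0(\omega_C L^{-2})=0$ for a general $L\in S$ and hence the Petri property.

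The main obstacle is the fine Brill--Noether bookkeeping in parts (i) and (ii): extracting a genuine improvement over Martens' theorem out of $\cliff(C)\ge 3$, and controlling how a general $L\in S$ interacts with the addition maps from lower-degree Brill--Noether loci and with the incidence structure of singular points on $\Gamma$. This is precisely where Voisin's Koszul/syzygy analysis via Lazarsfeld bundles becomes indispensable, and I would rely on it to close the dimension arguments in (i) and (ii).
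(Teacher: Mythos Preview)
Your proposal aligns with the paper's treatment. For (i) and (ii) the paper gives no independent argument at all: it simply refers to Voisin's original paper \cite{Vois88}, Prop.~II.0, which is exactly the endpoint you declare you would invoke. Your heuristic dimension counts for (i) and (ii) are reasonable motivation but, as you acknowledge, do not close without Voisin's Koszul/Lazarsfeld-bundle analysis; the paper does not attempt to close them either.

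For (iii) your argument is correct and is a mild rephrasing of the paper's. The paper argues: if the general $L\in S$ were not BNP, the assignment $L\mapsto\omega_CL^{-2}$ would define a rational map $S\dashrightarrow C^{(2)}$ with positive-dimensional fibres (since $\dim S\ge g-6>2$), but a fibre consists of square roots of a fixed line bundle, hence is finite --- contradiction. Your version observes instead that $L\mapsto\omega_CL^{-2}$ is a finite isogeny of Picard components, so the preimage of $W_2(C)\cong C^{(2)}$ has dimension $2<\dim S$. These are the same idea: ``positive-dimensional fibres are impossible'' and ``finite isogeny'' both encode that a line bundle has only finitely many square roots. One small quibble: the phrase ``by upper semicontinuity'' in your last sentence is a misnomer; the conclusion is a straight dimension comparison, not a semicontinuity argument.
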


\proof For (i) and (ii) we refer to \cite{Vois88}, Prop. II.0. 

(iii) If $L$ is not BNP then  $\omega_CL^{-2}$ is effective of degree two. Since $\dim(S)\ge g-6$ the rational map
\[
\xymatrix{
S \ar@{-->}[r]  & C^{(2)},&
L \ar@{|->}[r] & \omega_CL^{-2} }
\]
has positive dimensional fibres and this implies that $L^2$ has infinitely many distinct square roots,   which is impossible.  \qed

\bigskip

 From now on, in Part 1,  we will assume $\cliff(C) \ge 3$ and we will denote by $L$ an   element of $W^1_{g-2}(C)$ satisfying conditions (i), (ii) and (iii) of Theorem \ref{T:voisin}.   
 Note that, since $g \ge 11$, the condition $\cliff(C) \ge 3$ is equivalent to  the condition that $C$ does not have a $g^1_4$ (is not 4-gonal). 

\bigskip

In order to simplify the notation we will identify $C$ with its canonical model 
\[
\varphi_{\omega_C}(C) \subset \bP H^0(C,\omega_C)^\vee\ \cong\bP^{g-1}
\]
We will write $\bP$ instead of $\bP^{g-1}$ whenever no confusion is possible. To the sheaf $L$ there is associated, in a well known way, a cubic scroll $X\subset \bP$ containing $C$ (see e.g. \cite{Schr86}). Let us recall how this  is done.  We set:
\[
V :=  H^0(C,\omega_CL^{-1}) \cong \bC^3
\]
The Petri map
\[
\mu_0(L):V\otimes H^0(C,L) \lra H^0(C,\omega_C)=H^0(\bP,\cO(1))
\]
produces a $3\times 2$ matrix $M$ of linear forms on $\bP$ whose $2\times 2$ minors vanish on $C$.  
Explicitely let $x_0,x_1,x_2$ be  a basis for $H^0(\omega_CL^{-1})$. Let 
$y_1, y_2$ be basis of $H^0(L)$.  Since the Petri map is injective, the six elements $\omega_{ij}=x_i{y_j}$
are linearly independent  in $ H^0(\omega_C)$ and can be viewed as linear forms $X_{ij}$ in $\bP$.
Then 
\be\label{M}
M=\{X_{ij}\}\,,\qquad  {X_{ij}}_{|C}=\omega_{ij}=x_i{y_j}
\ee
The variety $X\subset \bP$ defined by the minors of $M$ is a cubic scroll of codimension two containing $C$. 

Choose a $(g-6)$-dimensional subspace $W \subset H^0(C,K_C)$ such that
\[
H^0(C,\omega_C) = \Im(\mu_0(L))\oplus W\cong (H^0(C,L)\otimes V) \oplus W
\]
 Consider on $\bP^1$ the vector bundle 
 \[
 \E := (V\otimes\cO_{\bP^1}(1))\oplus (W\otimes\cO_{\bP^1})
 \]
 and let $\wt X :=\bP\E$.  Then $X$ is the image of the morphism:
 \[
 \nu:=\varphi_{\cO_{\wt X}(1)}: \wt X \lra \bP^{g-1}
 \]
 It is a cone over the Segre variety $\bP^1\times \bP^2 \subset \bP^5$ with a $(g-7)$-dimensional vertex equal to
 $\bP W$.  In particular $X$ is arithmetically Cohen-Macaulay. One has 
 \[
 \nu^{-1}(\bP W) \cong \bP^1\times \bP W
 \]
while
\[
\nu_{|\wt X\setminus \nu^{-1}(\bP W)}: \wt X\setminus \nu^{-1}(\bP W)\lra X\setminus \bP W
\]
 is an isomorphism.
Let $\pi: \wt X \lra\bP^1$ be the natural projection and let 
\[
\wt X \supset \wt R = \pi^{-1}(p)\cong  \bP^{g-4}
\]
be any fibre of $\pi$.   Then $\wt R$ is a Cartier divisor in $\wt X$ which is mapped isomorphically onto its image $R:=\nu(\wt R)\subset X$. Note the following:
\medskip

\begin{enumerate}[(I)]\label{E:cliff1}
\item     $R \subset X$ is a Weil divisor containing $\bP W$ and not Cartier along $\bP W=\Sing(X)$.  
\item Since $R \cap C \in |L|$ and $|L|$ has no fixed points, we have $C \cap \bP W = \emptyset$. 
\item 
Identifying $C$ with $\wt C:=\nu^{-1}(C) \subset \wt X$ we have 
\[
\cO_{\wt C}(\wt R)= L = \cO_C(R)
\]
\end{enumerate}

\medskip

We shall denote by $H$ a hyperplane section of $X$ and we set
 \[
 \wt H := \nu^* H
 \]
 Of course we have:
 \[
 \cO_{\wt X}(\wt H) = \cO_{\wt X}(1), \quad  \cO_C(H) = \omega_C
 \]
 We will also need the following:
 
 \begin{lemma}\label{L:quad1}
 In the above situation we have:
 \begin{itemize}
\item[{\rm(i)}] All quadrics $Q \in H^0(\bP^{g-1},\I_{X/\bP}(2))$ have rank $\le 4$.

\item[{\rm(ii)}] For each point $p \in X\setminus \bP W$ there is a unique  (up to constant factor) non-zero quadric $Q_p \in H^0(\bP^{g-1},\I_{X/\bP}(2))$ that is  singular at $p$.

\item[{\rm(iii)}] If a quadric $Q\in H^0(\bP,\cO_{\bP}(2))$ contains $R\cup C$ for some   ruling $R\subset X$ and is singular at a point $p \in C$,  $p \notin R$, then $Q=Q_p \in H^0(\bP^{g-1},\I_{X/\bP}(2))$.
\end{itemize}

 \end{lemma}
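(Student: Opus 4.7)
For part (i), I would use the fact that $H^0(\I_{X/\bP}(2))$ is three-dimensional, spanned by the three $2\times 2$ minors of $M$, which involve only the six coordinates $X_{ij}$. Denote these minors $Q_0, Q_1, Q_2$ (obtained by deleting row $i$ of $M$). A direct calculation shows that, for $\alpha=(\alpha_0,\alpha_1,\alpha_2)\in\bC^3$, the symmetric matrix of $Q_\alpha:=\alpha_0 Q_0-\alpha_1 Q_1+\alpha_2 Q_2$ is block-antidiagonal of the form $\begin{pmatrix} 0 & N_\alpha \\ N_\alpha^T & 0\end{pmatrix}$ in the ordering $(X_{00},X_{10},X_{20},X_{01},X_{11},X_{21})$, where $N_\alpha$ is the antisymmetric $3\times 3$ matrix representing cross-product with $\alpha$. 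Since $N_\alpha$ has rank $2$ whenever $\alpha\neq 0$, the quadric $Q_\alpha$ has rank exactly $4$ when nonzero.

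For (ii), the same calculation gives $\Sing(Q_\alpha)\cap\bP^5=\bP^1\times\{[\alpha]\}$, a $\bP^1$-ruling of the Segre $\bP H^0(L)\times\bP V\subset\bP^5$, and the full singular locus in $\bP^{g-1}$ is the cone over this with vertex $\bP W$. Given $p\in X\setminus\bP W$, writing the projection $\bar p=[s:t]\times[a:b:c]\in\bP H^0(L)\times\bP V$, the condition $p\in\Sing(Q_\alpha)$ becomes $[\alpha]=[a:b:c]$, which determines $Q_p$ uniquely up to scalar.

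For (iii), the plan is to reduce to a factorization problem on $C$. Choose bases of $V$ and $H^0(L)$ so that $y_0(p)=0$, $y_1(p)=1$, $x_0(p)=1$, $x_1(p)=x_2(p)=0$; then $p=[0:1:0:\dots:0]$, and since $p\notin R$ we may take $R=\{X_{01}=X_{11}=X_{21}=0\}$. The singularity of $Q$ at $p$ forces the $X_{01}$-row of the symmetric matrix of $Q$ to vanish, so $Q$ does not involve $X_{01}$; together with $R\subset Q$ this forces
\[
Q = X_{11} A + X_{21} B
\]
for linear forms $A, B$ on $\bP^{g-1}$ not involving $X_{01}$. The containment $C\subset Q$ becomes, after cancelling $y_1$ (nonzero on $C$ as $p\notin R$), the relation $x_1 A|_C + x_2 B|_C = 0$ in $H^0(\omega_C^2 L^{-1})$. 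Let $D_{\mathrm{base}}$ be the base locus on $C$ of the pencil $\{x_1,x_2\}\subset V$. The base-point-free pencil trick applied to $\{x_1,x_2\}\subset H^0(\omega_C L^{-1}(-D_{\mathrm{base}}))$ yields $A|_C=x_2\xi$ and $B|_C=-x_1\xi$ for some $\xi\in H^0(L(D_{\mathrm{base}}))$.

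The requirement that $A$ have no $X_{01}$-component translates, via the isomorphism $H^0(\bP,\cO(1))\cong H^0(C,\omega_C)$, into the vanishing $(A|_C)(p)=0$. A local calculation at $p$ shows that $(x_2\xi)(p)$ is the product of a nonzero constant (the leading coefficient of $x_2$ at $p$) with the residue of $\xi$ at $p$, forcing $\xi$ to be regular at $p$. By Voisin's theorem, $D_{\mathrm{base}}$ equals either $p$ (when the image of $p$ in $\Gamma$ is smooth) or $p+p'$ (when $p$ lies above a node of $\Gamma$ with second preimage $p'\in C$). In the smooth case we conclude $\xi\in H^0(L)$ directly; in the nodal case one has a priori $\xi\in H^0(L(p'))$, and a Riemann--Roch computation using $h^0(\omega_C L^{-1}(-p'))=\dim V_{p'}=2$ gives $H^0(L(p'))=H^0(L)$. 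Writing $\xi=\mu y_0+\nu y_1$, we obtain $A=\mu X_{20}+\nu X_{21}$ and $B=-(\mu X_{10}+\nu X_{11})$, whence
\[
Q = X_{11}A+X_{21}B = -\mu(X_{10}X_{21}-X_{11}X_{20}) = -\mu Q_p
\]
by (ii), completing the proof. The main obstacle is the nodal case: one must rule out a spurious ``residual'' section $\eta\in H^0(L(p+p'))\setminus H^0(L)$ that could otherwise produce a quadric outside the ideal of $X$. The Riemann--Roch identity $H^0(L(p'))=H^0(L)$ (ultimately relying on Voisin's characterization) together with the singularity condition at $p$ is precisely what excludes such an $\eta$.
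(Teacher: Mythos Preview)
Your proofs of (i) and (ii) are correct and give a fully explicit description of the net of quadrics through $X$. The paper argues more synthetically: for (i), any two distinct quadrics through $X$ meet residually in a $\bP^{g-3}$, and a quadric in $\bP^{g-1}$ containing a $\bP^{g-3}$ has rank $\le 4$; for (ii), projecting $X$ from $p$ yields a quadric $\Sigma\subset\bP^{g-2}$, and $Q_p$ is the cone over $\Sigma$ with vertex $p$.

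For (iii) your route is substantially different from, and much longer than, the paper's. The paper simply observes that $Q$, being singular at $p$ and containing $R$, must contain the span $\langle p,R\rangle\cong\bP^{g-3}$; hence $\rk Q\le 4$, and $Q$ is then the rank-$\le 4$ quadric associated to the pencil $|L(p)|=|L|+p$, which therefore contains every $\langle p,R'\rangle$ and thus all of $X$. Your argument via explicit coordinates and the base-point-free pencil trick works in outline, but a few points need care. First, for your identification $p=[0{:}1{:}0{:}\cdots{:}0]$ you must choose the complement $W\subset H^0(\omega_C)$ to lie in the hyperplane of sections vanishing at $p$; otherwise $p$ has nonzero $W$-coordinates and ``$Q$ singular at $p$'' does not force the $X_{01}$-row of the Gram matrix to vanish. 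Second, your ``leading coefficient of $x_2$ at $p$ is nonzero'' amounts to $\mathrm{ord}_p(x_2)=\mathrm{ord}_p(D_{\mathrm{base}})$; this can always be arranged by swapping $x_1,x_2$, but should be said. Third, Voisin's theorem allows arbitrary $A_n$ double points on $\Gamma$, not only nodes, so $D_{\mathrm{base}}$ may equal $2p$ (cuspidal type) as well as $p$ or $p+p'$; fortunately your Riemann--Roch step goes through unchanged, since in every case $\deg(D_{\mathrm{base}}-p)\le 1$ and $H^0(L(q))=H^0(L)$ for any single point $q$ (using that $|\omega_CL^{-1}|$ is base-point-free). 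With these small patches your argument is complete; the paper's route bypasses all of it via the one-line observation on $\langle p,R\rangle$.
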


 \proof (i) If $Q,Q' \in H^0(\bP,\I_{X/\bP}(2))$ are distinct then $Q \cap Q' = X\cup  Z$ where  $Z \cong \bP^{g-3}$. Then $Q$  has rank $\le 4$ because it contains a  $\bP^{g-3}$.
 
 (ii)  Projecting $X$ from $p$ we get a quadric $\Sigma\subset \bP^{g-2}$.  Then $Q_p$ is the cone projecting $\Sigma$ from $p$.
 
 (iii) The quadric $Q$ contains $R$ and is singular at $p$, thus it contains 
 $\langle p,R\rangle\cong \bP^{g-3}$. Hence $\rk(Q) \le 4$. By construction $Q$ is the quadric associated to the pencil $|L(p)|=|L|+p$, which is a $g^1_{g-1}$ (by definition, this is the union  of the codimension two spaces $\langle D\rangle$, as $D$ runs in  $g^1_{g-1}$).
  Then $Q$ contains all the spaces 
 $\langle p,R'\rangle$ as well, and therefore it contains $X$.  \qed

\vskip 0.5 cm
\section{A digression on blowing-ups}\label{S:digression}

In this section we fix some notation and recall some well-known facts. 

Let $S$ be a projective nonsingular algebraic surface, $P_0\in S$ and $c \ge 1$ an   integer. 
Let
\[
\sigma: \xymatrix{
\wt S=S_c\ar[r]^{\sigma_c}& \cdots \ar[r]^-{\sigma_2} &S_1  \ar[r]^-{\sigma_1}&S}
\]

be the composition of $c$ blowing-ups of $S$ with centers $P_0$ and     $c-1$ points $P_1,\dots, P_{c-1}$ belonging to consecutive successive infinitesimal neighborhoods  of $P_0$.  In other words, $P_j$ belongs to the $(-1)$-curve on $S_j$ coming from the blowing-up of $P_{j-1}\in S_{j-1}$,  $j=1,\dots, c-1$. If $j \ge 2$ we assume that $P_j$ does not also belong to the $(-2)$-curve  coming from the blowing up of $P_{j-2}$.
\bigskip

NOTATION: We let $e_1,\dots, e_c \subset \wt S$ be the components of the tree of exceptional rational curves arising from the $c$ blowing-ups, where $e_j$ is the rational curve coming from the $j$-th blowing-up. Therefore:
\[
e_c^2=-1, \quad e_j^2=-2, \ j=1,\dots, c-1,\quad e_j\cdot e_{j+1}=1, \ j=1,\dots, c-1
\]
and $e_j\cdot e_h=0$ otherwise.   
 Moreover we let
\[
E_j:= \sum_{h=j}^ce_h
\]
We have:
\[
E_j^2=-1, \quad E_i\cdot E_j=0, \ i\ne j
\]
\bigskip

With this notation we have:
\begin{align}
K_{\wt S} &= \sigma^*K_S+ \sum_j E_j \notag 
\end{align}
Note also that
\begin{equation}\label{E:eij}
E_j\cdot e_h = \begin{cases}1&\text{if $j=h+1$}\\-1&\txt{if $j=h$}\\ 0&\text{otherwise}\end{cases}
\end{equation}

\bigskip

Let $D\subset S$ be an integral curve. Recall that  $P\in D$ is called an \emph{$A_n$-singularity}, $n \ge 1$, if a local equation of $D$ at $P$ is analytically equivalent to $y^2+x^{n+1}=0$. All $A_n$-singularities are double points and conversely every double point is an $A_n$-singularity for a unique $n$. The delta-invariant of an $A_n$ singularity is:
\[
\delta = \left[\frac{n+1}{2}\right]
\]
 (\cite{GLS07}, p. 206). 
Let $\mathbf{c}\subset \cO_{D,P}$ be the conductor ideal, i.e. the annihilator of $\overline{\cO}_{D,P}/\cO_{D,P}$, where $\overline{\cO}_{D,P}$ denotes the integral closure of $\cO_{D,P}$. Then $\dim_{\bC}\cO_{D,P}/\mathbf{c}=\delta$. Let $\mathbf{A}\subset\cO_{S,P}$ be the inverse image of $\mathbf{c}$ under $\cO_{S,P}\lra \cO_{D,P}$ ($\mathbf{A}$ is the \emph{local adjoint ideal} of $D$ at $P$). Since it has the same colength  $\delta$ in $\cO_{S,P}$ as $\mathbf{c}\subset \cO_{D,P}$ and contains the jacobian ideal of $D$ at $P$ \cite{DH88}, the local adjoint ideal of an $A_n$-singularity is analytically equivalent to 
\[
(x^\delta,y)=(x^{\left[\frac{n+1}{2}\right]},y)
\]
In particular it defines a curvilinear scheme of colength $\delta$.
 An $A_n$-singularity of $D$ at $P$ can be resolved by precisely $\delta$ blowing-ups 
(\cite{GLS07}, Prop. 3.34).  Therefore, after performing the appropriate blowing-ups of $S$ we end-up with a diagram
\[
\xymatrix{
 \wt D \ar[d]\ar@{^(->}[r]& \wt S \ar[d]^-\sigma \\
D \ar@{^(->}[r]& S}
\]
where $\wt D$  is the proper transform of $D$ and $\wt S$  contains a tree of rational curves exactly analogous to the one described above, with $c$ replaced by $\delta$.  Moreover:
\[
\wt D = \sigma^*D-2\sum_jE_j
\]
and
\[
\wt D\cdot e_\delta > 0, \quad \wt D \cdot e_j=0,\ j=1,\dots, \delta-1
\]
The last assertions follow from the fact that the total transform of an $A_n$-singularity, $n \ge 3$, is a $D_{n+1}$-singularity (see \cite{BPVdV84}, table on p. 65).
Finally observe that   if $\delta \ge 2$ the union
\[
\bigcup_{j=1}^{\delta-1} e_j
\]
is a Hirzebruch-Jung configuration and therefore it can be contracted to a rational double point (\cite{BPVdV84},  Prop. 3.1 p. 74).


 \vskip 0.5 cm
\section{The surface $P$}\label{surface-P}

The plane curve $\Gamma := \varphi_{\omega_CL^{-1}}(C) \subset \bP^2$ is   irreducible    of degree $g$ and has only double points $P_1,\dots, P_k$  as singularities, by Theorem \ref{T:voisin}.  For $i=1,\dots, k$ the point $P_i$ is an $(A_{n_i})$-singularity for $\Gamma$, for some $n_i$.  
Let $\delta_i=\left[\frac{n_i+1}{2}\right]$  be the corresponding local delta-invariant. Then:
\[
\sum_i\delta_i =:\delta = {g-1\choose 2}-g
\]
By what we have seen in \S \ref{S:digression}  the adjoint ideal sheaf $Adj(\Gamma)\subset \cO_{\bP^2}$ defines a  curvilinear scheme.  Let $Y\subset \bP^2$ be a general adjoint curve to $\Gamma$ of degree $g-4$.  Since $|L|$ is a base point free pencil, $Y$ is nonsingular at $P_1,\dots, P_k$. 
We have a diagram:
\[
 \xymatrix{
 C \ar@{^(->}[r]\ar[d]_{\varphi_{KL^{-1}}}&\wt P \ar[d]^-\sigma \\
\Gamma \ar@{^(->}[r] & \bP^2 }
\]
where $\sigma$ is a sequence of $\delta$ blowing-ups and $C$ is the proper transform of $\Gamma$. 
We will denote by $e_{i1},\dots,e_{i\delta_i}$ the components of $\sigma^{-1}(P_i)$ and by 
\[
E_{ij} = \sum_{h=j}^{\delta_i} e_{ih}
\]

For the general theory of adjoint curves we refer to Szpiro's book \cite{Szp79}.

  \begin{prop}\label{P:copp}
  The line bundle $\cO_{\wt P}(K_{\wt P}+C)$ is globally generated and maps $\wt P$ birationally onto a  surface $P\subset \bP$ obtained by contracting all the $(-2)$-curves of $\wt P$. In particular $P$ is normal and has at most  rational double points. 
If $\Gamma$ is nodal then $P \cong \wt P$ is nonsingular. Furthermore $P\subset X$.
  \end{prop}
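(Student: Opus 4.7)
The plan is to analyze $|K_{\wt P}+C|$ via its pushforward to $\bP^2$, using the identification with the degree-$(g-3)$ adjoint linear system to $\Gamma$, and via the decomposition $K_{\wt P}+C = \sigma^*\cO_{\bP^2}(1) + \wt Y$ (where $\wt Y$ is the proper transform of a general degree-$(g-4)$ adjoint $Y$) which links it to the scroll $X$.

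First, from the short exact sequence
\[
0 \to \cO_{\wt P}(K_{\wt P}) \to \cO_{\wt P}(K_{\wt P}+C) \to \cO_C(K_C) \to 0
\]
and the vanishings $h^0(K_{\wt P}) = h^1(K_{\wt P}) = 0$ (Serre duality on the rational surface $\wt P$), restriction gives an isomorphism $H^0(K_{\wt P}+C) \xrightarrow{\sim} H^0(K_C) \cong \bC^g$. Using the formulas of \S \ref{S:digression} I would compute $(K_{\wt P}+C)\cdot e_{i,h} = 0$ for $h<\delta_i$ and $(K_{\wt P}+C)\cdot e_{i,\delta_i} = 1$, which dictates that the associated morphism must contract each $(-2)$-curve and send each $(-1)$-curve isomorphically to a line. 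Riemann–Roch on $\wt P$ gives $\chi(\wt Y)=2$, and since $\wt Y|_C = L$ has $h^0(L)=2$ while the restriction to $C$ is injective, one deduces $h^0(\wt Y)=2$. Hence the Petri map identifies $H^0(\sigma^*\cO(1))\otimes H^0(\wt Y)\cong V\otimes H^0(L)$ with a six-dimensional subspace of $H^0(K_{\wt P}+C)$ consisting of the products $\tilde x_i\tilde y_j$.

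Next I would prove base-point-freeness of $|K_{\wt P}+C|$, the main technical step. On $C$ it follows from the restriction isomorphism and the base-point-freeness of $|K_C|$ (assured by $\cliff(C)\ge 3$). On the exceptional curves, the proper transform of a general adjoint of degree $g-3$ is disjoint from the $(-2)$-curves and meets each $(-1)$-curve $e_{i,\delta_i}$ transversally, so no $e_{i,h}$ is a fixed component. On the complement of $C$ and the exceptional locus, the six products $\tilde x_i\tilde y_j$ cover every point except possibly the preimage of the excess base locus $B$ of $|\wt Y|$, a finite set of $(g-4)^2-\delta$ points off $\Gamma$. At a point $b\in\sigma^{-1}(B)$ I would argue that some section in the $(g-6)$-dimensional complement of the Petri image does not vanish; equivalently, that $H^1(\bP^2,\cO_{\bP^2}(g-3)\otimes\Adj(\Gamma)\otimes\mathfrak m_{\sigma(b)})=0$, which I would reduce to the regularity $H^1(\bP^2,\cO_{\bP^2}(g-3)\otimes\Adj(\Gamma))=0$ (equivalent to $h^0=g$) plus the fact that evaluation at $\sigma(b)\notin\{P_i\}$ imposes an independent condition on top of the adjoint conditions at the $P_i$. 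This is the step I expect to be the main obstacle.

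Once global generation is established, the remaining assertions follow quickly. The image $\phi(\wt P)$ lies in $X$ because each product $\omega_{ij}=\tilde x_i\tilde y_j$ satisfies the $2\times 2$ minors of the matrix $M$ defining $X$. The morphism $\phi$ is birational since it restricts to the canonical embedding on $C$, and by the intersection numbers above it contracts each $(-2)$-curve to a point; each chain $e_{i,1}\cup\cdots\cup e_{i,\delta_i-1}$ is a Hirzebruch–Jung configuration of type $A_{\delta_i-1}$ whose contraction produces a rational double point of the same type on $P$, so $P$ is normal with only rational double point singularities. If $\Gamma$ is nodal then every $\delta_i=1$, no $(-2)$-curves appear, and $\phi$ is an isomorphism $\wt P\xrightarrow{\sim} P$.
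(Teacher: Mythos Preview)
Your overall strategy is genuinely different from the paper's. You aim to prove base-point-freeness away from $C$ and the exceptional locus by covering with the six products $\tilde x_i\tilde y_j$ and then handling the base locus $B$ of $|\wt Y|$ separately, whereas the paper proves directly that $|K_{\wt P}+C|$ is \emph{very ample} on the proper transform $\lambda$ of every line $\Lambda\subset\bP^2$, via restriction sequences of the form
\[
0\to\sigma^*\cO(g-4)(-E')\to\sigma^*\cO(g-3)(-E)\to\cO_\lambda(k)\to 0
\]
and checking exactness on global sections. This line-by-line argument simultaneously gives base-point-freeness and shows that $\Phi$ is an embedding on $\wt P\setminus\bigcup e_{ij}$, so that the image is precisely the contraction of the $(-2)$-chains.

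Your step at $b\in\sigma^{-1}(B)$ is circular as written: ``$H^1(\I_Z(g-3))=0$ plus the fact that $\sigma(b)$ imposes an independent condition'' --- the second clause \emph{is} the conclusion you want. The clean fix is Castelnuovo--Mumford regularity: you have already established $h^0(\wt Y)=2$, which is equivalent to $H^1(\I_Z(g-4))=0$; together with the trivial $H^2(\I_Z(g-5))=0$ this makes $\I_Z$ $(g-3)$-regular, hence $\I_Z(g-3)$ is globally generated and there are no base points off the exceptional locus. With this correction your approach gives global generation, but not the stronger very-ampleness on $\wt P\setminus\bigcup e_{ij}$; to conclude that $P$ is \emph{exactly} the $(-2)$-contraction (hence normal with only RDPs) you still owe either the paper's argument on lines or a direct check that no other curve is contracted and that $\Phi$ separates the remaining points. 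Finally, for base-point-freeness on the $e_{ij}$ the paper's argument---use that $|K_{\wt P}+C|$ restricts to $|K_C|$ and non-hyperellipticity on $e_{i\delta_i}$, then bootstrap down the chain since $(K_{\wt P}+C)\cdot e_{ij}=0$ forces any base point to make $e_{ij}$ a fixed component---is more robust than invoking a ``general adjoint,'' which tacitly presupposes the absence of fixed components.
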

  
  \proof
   Let 
\[
E = \sum E_{ij}
\]
and let $\wt Y\subset \wt P$ be the proper transform of $Y$. We have
  \begin{align}
  K_{\wt P}&= \sigma^*\cO_{\bP^2}(-3)(E) \notag \\ 
C&=\sigma^*\Gamma-2E= \sigma^*\cO_{\bP^2}(g)(-2E)\notag \\
K_{\wt P}+C &\sim \sigma^*\cO_{\bP^2}(g-3)(-E)\notag \\
\wt Y &\sim \sigma^*\cO_{\bP^2}(g-4)(-E)\notag 
\end{align}
Let   $\Lambda\subset \bP^2$ be a line and let $\lambda\subset \wt P$ be its proper transform in $\wt P$. 
Assume first that $\Lambda$ does not contain any of the points $P_1,\dots, P_k$.  Since $|\omega_CL^{-1}|$ is a complete  $g^2_g$  the adjoint ideal to $\Gamma$ imposes independent conditions to the curves of degree $\ge g-4$. This is an exercise in \cite{ACGH84}, p. 57, in the nodal case.  In the general case this  follows  from the general theory of adjoints as explained in  \cite{Szp79}, p. 40-42.  As a result 
the  exact sequence:
\[
\xymatrix{
0 \ar[r]&\sigma^*\cO_{\bP^2}(g-4)(-E)\ar[r] & \sigma^*\cO_{\bP^2}(g-3)(-E) \ar[r] & \cO_\lambda(g-3) \ar[r] &0}
\]
is exact on global sections, and therefore $|K_{\wt P}+C|$ embeds $\lambda$.  
Assume now that $\Lambda$ contains   $P_1$ and none of the $P_i$'s for $i \ge 2$. We have:
\[
\cO_{\wt P}(\lambda) = \sigma^*\cO_{\bP^2}(1)(-\sum_{j\le j_0}E_{1j})
\] 
for some $j_0 \ge 1$, and
\[
E\cdot \sum_{j\le j_0}E_{1j} = -j_0
\]
Consider the exact sequence:
\[
\xymatrix{
0 \ar[r]&\sigma^*\cO_{\bP^2}(g-4)(-E+\sum_{j\le j_0}E_{1j})\ar[r] & \sigma^*\cO_{\bP^2}(g-3)(-E) \ar[r] & \cO_\lambda(g-3-j_0) \ar[r] &0}
\]
As $C\cdot\lambda\geq 0$ ($C$ is irreducible), and $C\cdot\lambda=g-2j_0$, we get $g\geq 2j_0$. On the other hand $g\geq 11$, so that $g-3-j_0>0$. 
Since $h^0(\wt P, \sigma^*\cO_{\bP^2}(g-4)(-E+\sum_{j\le j_0}E_{1j}))=2+j_0$ the above  sequence is exact on global sections.
A similar argument can be given for any other line $\Lambda \subset \bP^2$, no matter how many of the $P_i$'s it contains.
This implies that $|K_{\wt P}+C|$ is very ample on the  proper transform $\lambda$ of any line $\Lambda\subset \bP^2$. Therefore $|K_{\wt P}+C|$ is base-point-free and very ample on $\wt P \setminus \bigcup e_{ij}$. Let us prove it is also base-point-free on $\bigcup e_{ij}$.
\vskip 0.3 cm
{\it $|K_{\wt P}+C|$ has no base point on $e_{i,\delta_i}$} : set $C\cdot e_{i,\delta_i}=q_1+q_2$.  If there were a base point for $|K_{\wt P}+C|$ this could not lie on $C$, as $|K_{\wt P}+C|$ cuts out on $C$ the canonical series. But then, since 
$(K_{\wt P}+C)\cdot e_{i,\delta_i}=1$, any element of $|K_{\wt P}+C|$ passing through $q_1$ would pass through $q_2$ (indeed would contain $e_{i,\delta_i}$ as a component)
which is absurd, since $C$ is non-hyperelliptic.

\vskip 0.3 cm
{\it $|K_{\wt P}+C|$ has no base point on $e_{i,j}$ with $j<\delta_i$}:  We know that $(K_{\wt P}+C)\cdot e_{i,j}=0$, if $j<\delta_i$,  so that $|K_{\wt P}+C|$ has a base point on $e_{i,j}$ only if  $e_{i,j}$ is a fixed component of 
$|K_{\wt P}+C|$. But then $|K_{\wt P}+C|$ has a fixed point on $e_{i,j+1}$ and so on, until one reaches $e_{i,\delta_i}$,
and a contradiction.
\vskip 0.3 cm

It remains to check how $|K_{\wt P}+C|$ restricts to the curves $E_{ij}$. Using \eqref{E:eij} we deduce that
\[
(K_{\wt P}+C)\cdot e_{ij}= \begin{cases} 1 & \text{if $j=\delta_i$} \\ 0&\text{if $1\le j\le \delta_i-1$}\end{cases}
\]
Therefore $|K_{\wt P}+C|$ contracts the Hirzebruch-Jung chains $e_{i1}+\cdots+ e_{i\delta_i-1}$ of $(-2)$-curves to rational double  points  and maps the $(-1)$-curves $e_{i\delta_i}$ isomorphically onto lines.
 Then the morphism 
\[
\Phi:  \wt P\lra \bP^{g-1}
\]
defined by the linear system $|K_{\wt P}+ C|$   maps $\wt P$    onto a surface $P$ containing $C$ and having the asserted properties.
Let us finally show that $P$ is contained in $X$. Let $x_0,x_1,x_2$ be a basis of  $H^0(\omega_CL^{-1})$ and recall the matrix $M$ introduced in (\ref{M}).
Let 
$\phi_1=\phi_1(x_0,x_1,x_2)$, $\phi_2=\phi_2(x_0,x_1,x_2)$ be degree-$(g-4)$ adjoint polynomials such that
${\phi_1}_{|C}=y_1, {\phi_2}_{|C}=y_2$ is 
a basis of $H^0(L)$.  Under the morphism $\Phi: \wt P\to \bP$, we get $\Phi^*(X_{ij})=x_i\phi_j$. This means that
the matrix $M$ has rank one on $P$, or equivalently that $P$ is contained in $X$.
This
 concludes the proof. \qed 
  
\bigskip

For each ruling $R \subset X$ we will set
\[
Y_R := R \cap P
\]
and 
\[
\wt Y_R := \Phi^{-1}(Y_R) \subset \wt P
\]
Note that the curves $\wt Y_R$ are the proper transforms of the adjoint curves to $\Gamma$ of degree $g-4$.  In particular
\[
\cO_C(\wt Y_R) = L
\]

\begin{lemma}\label{L:quad2}
\[
h^0(X,\I_{C\cup R/X}(2H)) \ge h^0(X,\I_{P\cup R/X}(2H)) \ge g-5
\]
\end{lemma}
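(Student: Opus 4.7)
The first inequality is immediate: $C\subset P$ by Proposition~\ref{P:copp}, so $\I_{P\cup R/X}\subseteq \I_{C\cup R/X}$ and the inequality of global sections follows. All the work lies in the lower bound $h^0(X,\I_{P\cup R/X}(2H))\ge g-5$. My plan is to extract it from the short exact sequence
$$
0 \to \I_{P\cup R/X}(2H) \to \I_{R/X}(2H) \to \iota_*\I_{Y_R/P}(2H) \to 0,
$$
with $\iota: P\hookrightarrow X$ and $P\cap R=Y_R$ scheme-theoretically. Taking global sections gives
$$
h^0(X,\I_{P\cup R/X}(2H)) \,\ge\, h^0(X,\I_{R/X}(2H)) - h^0(P,\I_{Y_R/P}(2H)),
$$
so the task reduces to computing each right-hand term.

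For the first term, the arithmetic Cohen--Macaulayness of $X$ (which follows from the Eagon--Northcott resolution of the ideal of $X$ generated by the $2\times 2$ minors of $M$) makes $H^0(\bP,\cO(2))\to H^0(X,\cO_X(2H))$ surjective. Since $R\cong \bP^{g-4}$ is linearly embedded, $H^0(\bP,\cO(2))\to H^0(R,\cO_R(2))$ is also surjective, hence so is the restriction $H^0(X,\cO_X(2H))\to H^0(R,\cO_R(2H))$. Subtracting dimensions,
$$
h^0(X,\I_{R/X}(2H)) = \binom{g+1}{2}-3-\binom{g-2}{2} = 3g-6.
$$

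For the second term, I would pull back via $\Phi:\wt P\to P$. Because $P$ has only rational double point singularities, $\Phi_*\cO_{\wt P}=\cO_P$. The formula $\wt Y_R\sim \sigma^*\cO_{\bP^2}(g-4)(-E)$ together with the intersection table $E\cdot e_{i,j}=0$ for $j<\delta_i$ in Section~\ref{S:digression} shows that $\wt Y_R$ is disjoint from the $(-2)$-curves contracted by $\Phi$, so $Y_R$ avoids $\Sing(P)$ and is Cartier on $P$. Combining $\Phi^*\cO_P(H)=\cO_{\wt P}(K_{\wt P}+C)$ with the divisorial identities of Proposition~\ref{P:copp} yields
$$
\Phi^*\I_{Y_R/P}(2H) \,\cong\, \cO_{\wt P}\bigl(\sigma^*\cO_{\bP^2}(g-2)(-E)\bigr),
$$
whose global sections are precisely the degree-$(g-2)$ adjoint curves to $\Gamma$. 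By the independence of adjoint conditions in degree $\ge g-4$ (invoked already in Proposition~\ref{P:copp}, cf.\ \cite{Szp79}), this space has dimension $\binom{g}{2}-\delta = 2g-1$, where the last step uses $\delta=\binom{g-1}{2}-g$. Putting the two bounds together gives $h^0(X,\I_{P\cup R/X}(2H))\ge (3g-6)-(2g-1)=g-5$.

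The only delicate point I anticipate is the pullback identification of $h^0(P,\I_{Y_R/P}(2H))$ with the adjoint cohomology on $\wt P$: this relies on $Y_R$ being Cartier on $P$, which in turn requires $\wt Y_R$ to miss all the contracted $(-2)$-curves. Once this is checked by the numerics above, the proof reduces to two straightforward dimension counts.
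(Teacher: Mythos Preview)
Your argument is correct and follows the same skeleton as the paper's: the same exact sequence $0\to\I_{P\cup R/X}(2H)\to\I_{R/X}(2H)\to\I_{Y_R/P}(2H)\to 0$, the same target values $3g-6$ and $2g-1$ for the two terms, and hence the same conclusion. The executions differ only in the supporting computations. For the first term the paper obtains $h^0(X,\I_{R/X}(2H))\ge 3g-6$ from the sequence $0\to\I_{X/\bP}(2)\to\I_{R/\bP}(2)\to\I_{R/X}(2H)\to 0$ and $h^0(\bP,\I_{R/\bP}(2))=3g-3$, rather than via surjectivity of restriction. For the second term the paper, after the obvious inequality $h^0(P,\I_{Y_R/P}(2H))\le h^0(\wt P,\cO_{\wt P}(2\wt H-\wt Y_R))$, restricts further to $C$ and reads off $h^0(C,\omega_C^2L^{-1})=2g-1$ from Riemann--Roch together with $H^0(\wt P,2\wt H-\wt Y_R-C)=0$; you instead stay on $\wt P$ and count degree-$(g-2)$ adjoints directly.

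One simplification you can make: the Cartier-ness of $Y_R$ on $P$ is not needed. Only the upper bound $h^0(P,\I_{Y_R/P}(2H))\le h^0(\wt P,\cO_{\wt P}(2\wt H-\wt Y_R))$ is required, and that follows immediately from injectivity of pullback of sections (using $\Phi_*\cO_{\wt P}=\cO_P$). This sidesteps your disjointness-from-$(-2)$-curves check, which as stated uses only intersection numbers and would strictly need irreducibility of $\wt Y_R$ (true for general $R$) to conclude set-theoretic disjointness.
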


\proof  The first inequality is obvious. From the exact sequence:
\[
\xymatrix{
0 \ar[r] & \I_{X/\bP}(2H) \ar[r] & \I_{R/\bP}(2H) \ar[r] & \I_{R/X}(2H) \ar[r]&0}
\]
we deduce:
\begin{align}
h^0(X,\I_{R/X}(2H)) & \ge h^0(\bP,\I_{R/\bP}(2)) - h^0(\bP,\I_{X/\bP}(2)) \notag \\
&= (3g-3) - 3 = 3g-6 \notag
\end{align}
Now consider the exact sequence
\[
\xymatrix{
0 \ar[r]& \I_{P\cup R/X}(2H) \ar[r]& \I_{R/X}(2H) \ar[r] & \I_{Y_R/P}(2H) \ar[r] &0}
\]
which implies:
\[
h^0(X,\I_{P\cup R/X}(2H)) \ge 3g-6 - h^0(P,\I_{Y_R/P}(2H))
\]
Finally observe that  
\begin{align}
h^0(P,\I_{Y_R/P}(2H)) &\le h^0(\wt P,\cO_{\wt P}(2H-\wt Y_R)) \notag \\
&\le h^0(C,\omega^2_CL^{-1}) \notag \\
&=2g-1 \notag
\end{align}
The first inequality is obvious, and the second inequality follows from the exact sequence:
\[
\xymatrix{
0 \ar[r]& \cO_{\wt P}(2H-\wt Y_R-C) \ar[r] & \cO_{\wt P}(2H- \wt Y_R) \ar[r]&\omega^2_CL^{-1}\ar[r]&0}
\]
and from the fact that $H^0(\wt P,\cO_{\wt P}(2H-\wt Y_R-C))=0$. \qed

 \begin{prop}\label{P:nonsingP}
 The surface $P$ is nonsingular along $C$.
 \end{prop}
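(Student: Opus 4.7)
The plan is to reduce the smoothness of $P$ along $C$ to the smoothness of $\wt P$ along $C$ by showing that the morphism $\Phi: \wt P \to P$ defined by $|K_{\wt P} + C|$ restricts to an isomorphism on a Zariski neighborhood of $C$. By Proposition \ref{P:copp}, $P$ is obtained from $\wt P$ by contracting precisely the Hirzebruch--Jung chains of $(-2)$-curves $\bigcup_{1 \le j < \delta_i} e_{i,j}$ to rational double points, while the $(-1)$-curves $e_{i, \delta_i}$ are mapped isomorphically onto lines in $\bP$. Hence $\Phi$ is an isomorphism on the open complement $U := \wt P \setminus \bigcup_{j<\delta_i} e_{i,j}$ of the contracted locus, and it suffices to prove $C \subset U$.

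To see this, I would invoke the intersection data recorded in Section \ref{S:digression}: for the proper transform $\wt D = C$ of an $A_n$-singularity one has $C \cdot e_{i,j} = 0$ for every $1 \le j < \delta_i$ and $C \cdot e_{i,\delta_i} > 0$, so the proper transform meets only the last curve of each resolution chain. Since $\wt P$ is a smooth surface and $C$, $e_{i,j}$ are distinct irreducible curves on it, the local intersection multiplicity at any common point would be a strictly positive integer, so the vanishing $C \cdot e_{i,j} = 0$ forces $C \cap e_{i,j} = \emptyset$ set-theoretically for every $j < \delta_i$. Consequently $C \subset U$.

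Combining these two observations, $\Phi$ identifies an open neighborhood of $C$ in $\wt P$ with an open neighborhood of $C$ in $P$; since $\wt P$ is nonsingular, so is $P$ at every point of $C$.

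There is essentially no serious obstacle here: the statement is a bookkeeping corollary of Proposition \ref{P:copp} (which locates the singular points of $P$ among the images of the contracted $(-2)$-chains) combined with the local computations of Section \ref{S:digression} on the minimal resolution of an $A_n$-singularity (which guarantee that $C$ avoids those chains). The only subtlety worth a sentence is the passage from the numerical vanishing $C\cdot e_{i,j}=0$ to the set-theoretic disjointness $C\cap e_{i,j}=\emptyset$, which uses irreducibility of $C$ and the smoothness of $\wt P$.
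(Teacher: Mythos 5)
Your proposal is correct and follows essentially the same route as the paper: the paper's proof likewise observes that $C$ meets only the $(-1)$-curves $e_{i\delta_i}$ and is disjoint from the contracted $(-2)$-chains (justified there by the fact that the total transform of an $A_n$-singularity is a $D_{n+1}$-configuration, which is exactly the source of the intersection numbers you quote from Section \ref{S:digression}), and concludes that $C$ avoids the singular points of $P$. Your extra remark upgrading $C\cdot e_{i,j}=0$ to set-theoretic disjointness is the correct (implicit) step and is a welcome clarification.
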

 
 \proof   Keeping the notation of Proposition \ref{P:copp}, the curve $C\subset \wt P$ meets the $(-1)$-curves $e_{i\delta_i}$ and does not meet the other $e_{ij}$'s because $C\cup e_{i\delta_i}\cup e_{i\delta_i-1}$ is the total transform of a $D_4$ or $D_5$ singularity (\cite{BPVdV84}, table on p. 65).  This implies that $C$ does not contain any of the singular points of $P$.
 
\qed

\vskip 0.5 cm
\section{Triviality of a (restricted)  conormal sheaf}\label{conormal}
 If $A\subset B$ we shall adopt the standard notation $N^\vee_{A/B}$ for the conormal sheaf $\I_{A/B}/\I_{A/B}^2$ of $A$ in $B$. 

\begin{lemma}\label{L:conor1}
$N^\vee_{P/X|C}=N^\vee_{P/X}\otimes\cO_C$ is locally free and 
\[
c_1(N^\vee_{P/X|C}(2H-L))=0
\]
\end{lemma}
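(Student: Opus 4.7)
The plan is to first establish local freeness of $N^\vee_{P/X|C}$ and then compute its first Chern class by adjunction. Local freeness is immediate: by property (II) of Section 2, $C\cap\bP W=\emptyset$, so $X$ is smooth along $C$; combined with Proposition \ref{P:nonsingP}, this makes $P\subset X$ a local complete intersection along $C$. Hence $N^\vee_{P/X}$ is locally free of rank $\dim X-\dim P=g-5$ in a neighborhood of $C$, and its restriction to $C$ is locally free of rank $g-5$.

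For the first Chern class I will compute (additively)
\[
c_1(N^\vee_{P/X}|_C)=\omega_X|_C-\omega_P|_C
\]
by adjunction in a smooth open neighborhood of $C$. To obtain $\omega_X|_C$, I use the desingularization $\nu\colon \wt X=\bP(\E)\to X$ with $\E=V\otimes\cO_{\bP^1}(1)\oplus W\otimes\cO_{\bP^1}$; the projective bundle formula, together with $\rk\E=g-3$, $\det\E=\cO_{\bP^1}(3)$, and $K_{\bP^1}=\cO_{\bP^1}(-2)$, gives $K_{\wt X}\sim -(g-3)\wt H+\wt R$. Since $\nu$ is an isomorphism over $X\setminus\bP W$ and $C$ lies in this open set, and since $\cO_C(H)=\omega_C$ and $\cO_C(R)=L$ by property (III), I conclude $\omega_X|_C=-(g-3)K_C+L$.

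To obtain $\omega_P|_C$, note that by Proposition \ref{P:copp} $P$ has at worst rational double points and, by Proposition \ref{P:nonsingP}, $C$ avoids them, so $\omega_P|_C=\omega_{\wt P}|_C$. Adjunction on $\wt P$ gives $\omega_{\wt P}|_C=K_C-C|_C$. Using the explicit formulas $K_{\wt P}\sim -3\sigma^*H_{\bP^2}+E$ and $C\sim g\sigma^*H_{\bP^2}-2E$ from Proposition \ref{P:copp}, together with the identification $\sigma|_C\colon C\to\Gamma$ via $|\omega_C L^{-1}|$ (so $\sigma^*\cO_{\bP^2}(1)|_C=K_C-L$), solving the two resulting restriction identities yields $E|_C=(g-4)K_C-(g-3)L$ and $C|_C=(8-g)K_C+(g-6)L$, hence $\omega_P|_C=(g-7)K_C-(g-6)L$.

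Putting this together,
\[
c_1(N^\vee_{P/X}|_C)=\bigl[-(g-3)-(g-7)\bigr]K_C+\bigl[1+(g-6)\bigr]L=-(g-5)(2K_C-L).
\]
Since $(2H-L)|_C=2K_C-L$ and $N^\vee_{P/X|C}$ has rank $g-5$, tensoring with $\cO(2H-L)$ shifts $c_1$ by $(g-5)(2K_C-L)$, giving the desired vanishing. The one genuine point of care is that both $X$ and $P$ are singular, so one must take adjunction and the projective bundle formula with a grain of salt; this is resolved entirely by the fact that $C$ lies in the smooth loci of both, so everything can be pulled back to $\wt X$ and $\wt P$ and the standard formulas apply verbatim.
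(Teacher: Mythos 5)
Your proof is correct and follows essentially the same route as the paper: both arguments reduce to the smoothness of $X$ and $P$ along $C$ plus the explicit divisor classes $K_{\wt X}=-(g-3)\wt H+\wt R$, $K_{\wt P}=-3\ell+E$, $C=g\ell-2E$ and their restrictions to $C$ in terms of $\omega_C$ and $L$. The only (cosmetic) difference is that you take determinants via the adjunction $\det N^\vee_{P/X}|_C=(\omega_X\otimes\omega_P^{-1})|_C$ directly, whereas the paper reaches the same class by combining the conormal sequence of $C\subset X$ with the sequence $0\to N^\vee_{P/X|C}\to N^\vee_{C/X}\to N^\vee_{C/P}\to 0$; your intermediate answer $c_1(N^\vee_{P/X|C})=-(g-5)(2\omega_C-L)$ agrees with the paper's.
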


 \proof By Proposition \ref{P:nonsingP}, the conormal sheaf 
 $N^\vee_{P/X}$ is locally free of rank $g-5$ along $C$. Consider the exact sequence of locally free sheaves on $C$:
 \[
 \xymatrix{
 0 \ar[r]&N^\vee_{C/X}(2H-L) \ar[r]& \Omega^1_{X|C}(2H-L) \ar[r] & \omega_C(2H-L) \ar[r] & 0}
 \]
 We have:
 \begin{align}
 c_1(N^\vee_{C/X}(2H-L)) &=c_1(\Omega^1_{X|C}(2H-L))- c_1(\omega_C(2H-L))\notag \\
&= c_1(\Omega^1_{\wt X|C}(2H-L))- c_1(\omega_C(2H-L))\notag \\
&= -(g-3)H+L+(g-3)(2H-L)-(3H-L) \notag\\
&= (g-6)H - (g-5)L  \notag \\
&= (g-6)\omega_C - (g-5)L \notag
\end{align}
On the other hand, since $P$ is nonsingular along $C$, we have the exact sequence on $C$:
\[
\xymatrix{
0\ar[r]&N^\vee_{P/X|C}(2H-L) \ar[r] & N^\vee_{C/X}(2H-L) \ar[r] & N^\vee_{C/P}(2H-L) \ar[r] & 0
}
\]
  and therefore:
  \begin{align}
  c_1(N^\vee_{P/X|C}(2H-L))&=c_1(N^\vee_{C/X} (2H-L))-c_1(N^\vee_{C/P} (2H-L))\notag \\
  &= (g-6)\omega_C-(g-5)L- (2\omega_C-L-\cO_C(C))    \notag\\
  &=(g-8)\omega_C-(g-6)L+\cO_C(C) \notag
  \end{align}
 We can rewrite this  class as the restriction of a class on $\wt P$ by letting 
 \[
 \ell := \sigma^*\cO_{\bP^2}(1)
 \]
 With this notation we have:
 \[
 \omega_C = \cO_C((g-3)\ell-E), \quad L=\cO_C((g-4)\ell-E), \quad 
 \cO_C(C)= \cO_C(g\ell-2E)
 \]
where $E$ has the same meaning as in the proof  of Proposition \ref{P:copp}. 
 Therefore the class $c_1(N^\vee_{P/X|C}(2H-L))$ is the restriction to $C$ of the class on $\wt P$:
 \[
 (g-8)((g-3)\ell-E) - (g-6)((g-4)\ell-E) + (g\ell-2E) 
= 0
\]
 \qed

 \bigskip
 
 Now we can prove the following triviality result.

 \begin{prop}\label{P:trivial}
  The restriction homomorphism:
  \[
  \I_{P\cup R/X}(2H) \lra N^\vee_{P/X|C}(2H-L)
  \]
  induces   an isomorphism:
  \[
  \bar\alpha: \xymatrix{
  H^0(X,\I_{P\cup R/X}(2H))\otimes\cO_C \ar[r]^-{\cong}& N^\vee_{P/X|C}(2H-L)}
\]
  \end{prop}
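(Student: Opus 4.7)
My plan is to first make $\bar\alpha$ explicit. Given $Q \in H^0(X, \I_{P\cup R/X}(2H))$, a quadric on $X$ vanishing along both $P$ and $R$, its class modulo $\I_{P/X}^2$ is a section of $N^\vee_{P/X}(2H)$; I restrict this to $C \subset P$ to obtain a section of $N^\vee_{P/X|C}(2H)$. The vanishing of $Q$ on $R$ forces this restricted section to vanish on $C \cap R$, which by item (III) in the scroll construction is a divisor of class $L$. Hence the image naturally lies in $N^\vee_{P/X|C}(2H - L)$, and this gives the sheaf map underlying $\bar\alpha$.

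Lemma~\ref{L:conor1} tells us the target is a rank-$(g-5)$ vector bundle with trivial first Chern class, while Lemma~\ref{L:quad2} gives $h^0(X, \I_{P\cup R/X}(2H)) \ge g-5$. My strategy is to establish pointwise bijectivity of $\bar\alpha$ at a single point $p_0 \in C \setminus R$: if the induced linear map
\[
H^0(X, \I_{P\cup R/X}(2H)) \lra (N^\vee_{P/X|C}(2H-L))_{p_0}
\]
is a bijection onto the $(g-5)$-dimensional fiber, then $h^0(X,\I_{P\cup R/X}(2H))=g-5$ and $\bar\alpha$ becomes a morphism of locally free sheaves of rank $g-5$ on $C$ that is an isomorphism at $p_0$. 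Since the source is trivial and the target has trivial $c_1$ by Lemma~\ref{L:conor1}, the induced map on determinants is a nonzero morphism of degree-$0$ line bundles on $C$, hence an isomorphism, and therefore $\bar\alpha$ itself is an isomorphism on all of $C$.

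The heart of the argument is therefore the fiberwise analysis at $p_0$. The fiber $(N^\vee_{P/X|C})_{p_0}$ is the $(g-5)$-dimensional conormal space to $P$ in $X$ at $p_0$. For surjectivity, I would construct $g-5$ quadrics through $P \cup R$ whose ``first-order'' behavior at $p_0$ spans this space, exploiting the determinantal description of $X$ via the matrix $M$ of linear forms (from the Petri map) and the adjoint realization of $P$ from Proposition~\ref{P:copp}. Lemma~\ref{L:quad1}(iii) is the crucial dimension-controlling input: a quadric containing $R \cup C$ and singular at $p_0$ is forced to be a multiple of $Q_{p_0}$, which contains all of $X$ and hence maps to zero in $H^0(X, \I_{P\cup R/X}(2H))$; this identifies the single ``trivial'' relation among my candidate quadrics and pins down the dimension of the fiber image to at most $g-5$, which combined with the lower bound from Lemma~\ref{L:quad2} forces equality. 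For $p_0 \in C \cap R$ one replaces $R$ with another general ruling $R'\in|L|$ avoiding $p_0$ and argues analogously. The main obstacle is the explicit verification that this combinatorial count works out cleanly, matching the $(g-5)$ adjoint-quadric candidates with the $(g-5)$ normal directions to $P$ in $X$ at $p_0$.
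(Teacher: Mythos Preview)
Your approach matches the paper's: prove that the fiber map $\bar\alpha_{p}$ is an isomorphism at a single $p\in C\setminus R$, then use $c_1=0$ to propagate this to all of $C$. But you have made the fiberwise step harder than it needs to be. The paper does \emph{not} construct explicit quadrics for surjectivity. It proves only \emph{injectivity} of $\bar\alpha_p$; since the target fiber has dimension $g-5$ and the source has dimension $\ge g-5$ by Lemma~\ref{L:quad2}, injectivity alone forces $\bar\alpha_p$ to be an isomorphism and $h^0(X,\I_{P\cup R/X}(2H))=g-5$. Your ``main obstacle'' (matching adjoint-quadric candidates to normal directions) is therefore a phantom.

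For the injectivity your sketch is slightly loose. An element of $H^0(X,\I_{P\cup R/X}(2H))$ is not a quadric in $\bP$ but a class modulo $H^0(\bP,\I_{X/\bP}(2))$, so the sentence ``a quadric containing $R\cup C$ and singular at $p_0$'' needs justification. The paper supplies it via a three-term commutative diagram comparing global quadrics through $X$, through $P\cup R$ in $\bP$, and through $P\cup R$ in $X$, with the corresponding conormal fibers at $p$. The crucial point you omit is that $\gamma_p: H^0(\bP,\I_{X/\bP}(2))\to N^\vee_{X/\bP,p}\otimes\bC$ is \emph{surjective} (because $p$ is a smooth point of $X$ and $X$ is cut out by quadrics). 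This surjectivity produces a surjection $\ker\beta_p\twoheadrightarrow\ker\bar\alpha_p$, so any element of $\ker\bar\alpha_p$ lifts to an honest quadric in $\bP$ containing $P\cup R\supset C\cup R$ and singular at $p$; only then does Lemma~\ref{L:quad1}(iii) apply to force it into $H^0(\bP,\I_{X/\bP}(2))$, i.e., to zero in $H^0(X,\I_{P\cup R/X}(2H))$.
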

  
  \proof  By Proposition \ref{P:nonsingP},
the sheaf $N^\vee_{P/X|C}(2H-L)
$ is locally free of rank $g-5$. 
 Choose $p\in C$ but $p\notin R$.
Consider the vector space  homomorphism:
\[
\bar\alpha_p:H^0(X,\I_{P\cup R/X}(2H)) \lra N^\vee_{P/X|C}(2H-L)_p\otimes \bC
\]
 The homomorphism $\bar\alpha_p$ fits into the following commutative and exact diagram:
\[
\xymatrix{
0\ar[r]&H^0(\bP, \I_{X /\bP}(2H))\ar[r]\ar[d]^{\gamma_p}&H^0(\bP, \I_{P \cup R /\bP}(2H))\ar[r]\ar[d]^{\beta_p}&H^0(X, \I_{P \cup R / X}(2H))\ar[d]^{\bar\alpha_p}\ar[r]&0\\
0\ar[r]& N^{\vee}_{X/ \bP}(2H)_p\otimes \bC\ar[r]\ar@{=}[d]& 
N^{\vee}_{P/ \bP}(2H-R)_p\otimes \bC\ar[r]& N^{\vee}_{P/X}(2H-R)_p\otimes \bC\ar[r]&0 \\
&N^{\vee}_{X/ \bP}(2H-R)_p\otimes \bC
}
\]
The second row  is  exact  since  $N^{\vee}_{P/X}$ is locally free along $C$. The first row is exact because   $H^1(\bP,\I_{X/\bP}(2))=0$.     
  The homomorphism $\gamma_p$ is surjective since
$p$ is a smooth point of $X$ and $X$ is cut out by quadrics. It follows that there is a surjection
$$
\rho:  \ker(\beta_p) \longrightarrow \ker(\bar\alpha_p)
 $$
Now let  $Q \in \ker(\bar\alpha_p)$ and lift $Q$ to  $Q' \in \ker(\beta_p)$.
This means that $Q'$ is singular at $p$ and contains $R\cup C$.  Then $Q'$ contains $X$, by Lemma \ref{L:quad1} (iii), and therefore 
$Q=\rho(Q')=0$.  Thus $\bar\alpha_p$ is injective.

 Recall now (Lemma \ref{L:quad2}) that $h^0(X,\I_{P\cup R/X}(2H))\ge g-5$. Therefore $\bar\alpha_p$ is an isomorphism. 
 Thus $\bar\alpha$ is a morphism between a trivial bundle of rank $g-5$ and  a locally free rank-$(g-5)$ bundle with trivial Chern class. Since, for a point $p\in C$ the homomorphism $\bar\alpha_p$ is an isomorphism, $\bar\alpha$ is an isomorphism as well.
 
 \qed

\vskip 0.5 cm
\section{The square of the ideal sheaf of $C\subset X$}\label{square1}

The purpose of this section is to prove the following Proposition.

\begin{prop}\label{P:square1}
\[
H^1(X,\I^2_{C/X}(kH))=0
\]
for all $k \ge 3$.
\end{prop}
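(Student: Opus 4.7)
I would first translate the vanishing into a surjectivity. From the exact sequence
\[
0 \to \I^2_{C/X}(kH) \to \I_{C/X}(kH) \to N^\vee_{C/X}(kH) \to 0,
\]
the claim $H^1(X,\I^2_{C/X}(kH))=0$ becomes the combination of $H^1(X,\I_{C/X}(kH))=0$ and the surjectivity of the restriction $\rho_k\colon H^0(X,\I_{C/X}(kH))\to H^0(C,N^\vee_{C/X}(kH))$. The former is routine for $k\ge 2$: using the projective normality of $C$ (guaranteed by $\cliff(C)\ge 3$, which excludes trigonal and smooth plane quintic) and the arithmetic Cohen--Macaulayness of $X$ (a scroll of codimension two, cut out by quadrics), the sequence $0\to \I_{X/\bP}(kH)\to \I_{C/\bP}(kH)\to \I_{C/X}(kH)\to 0$ forces the vanishing. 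For the surjectivity of $\rho_k$, I would use the conormal sequence
\[
0\to N^\vee_{P/X|C}(kH)\to N^\vee_{C/X}(kH)\to \cO_C(kH-C)\to 0,
\]
valid since $P$ is smooth along $C$ (Proposition~\ref{P:nonsingP}). A diagram chase reduces the surjectivity of $\rho_k$ to two separate surjectivities: (A) $H^0(X,\I_{C/X}(kH))\twoheadrightarrow H^0(C,\cO_C(kH-C))$, and (B) $H^0(X,\I_{P/X}(kH))\twoheadrightarrow H^0(C,N^\vee_{P/X|C}(kH))$.

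Step (A) I would handle by factoring through $H^0(P,\cO_P(kH-C))$: the sequences $0\to \I_{P/X}(kH)\to \I_{C/X}(kH)\to \cO_P(kH-C)\to 0$ and $0\to \cO_P(kH-2C)\to \cO_P(kH-C)\to \cO_C(kH-C)\to 0$ reduce (A) to $H^1(X,\I_{P/X}(kH))=0$ and $H^1(P,\cO_P(kH-2C))=0$. Both follow via Proposition~\ref{P:copp} by pulling back to $\wt P$ and reducing to $H^1$-vanishings for $\sigma^*\cO_{\bP^2}(d)$ twisted by appropriate combinations of the exceptional divisors, with $d$ comfortably large for $k\ge 3$ and $g\ge 11$.

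Step (B) is the heart of the matter, and where Proposition~\ref{P:trivial} enters decisively. That proposition yields $N^\vee_{P/X|C}(2H-L)\cong \cO_C^{g-5}$, whence $N^\vee_{P/X|C}(kH)\cong \cO_C((k-2)K_C+L)^{g-5}$, so $H^0(C,N^\vee_{P/X|C}(kH))=H^0(C,(k-2)K_C+L)^{g-5}$. For a fixed ruling $R$ of $X$, Proposition~\ref{P:trivial} also identifies $H^0(X,\I_{P\cup R/X}(2H))$ with $\bC^{g-5}$. For any $q\in H^0(X,\I_{P\cup R/X}(2H))$ and $s\in H^0(X,\cO_X((k-2)H))$, the product $sq\in H^0(X,\I_{P/X}(kH))$ restricts, modulo $\I^2_{C/X}$, to $\bar q\cdot(s|_C\cdot \lambda_R)$ inside $\cO_C((k-2)K_C+L)^{g-5}$, where $\bar q\in\bC^{g-5}$ is the image of $q$ under the trivialization and $\lambda_R\in H^0(C,L)$ cuts out $R\cap C$. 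As $s$ varies, $s|_C$ spans $H^0(C,(k-2)K_C)$ by projective normality of $C$; as the ruling $R$ varies, $\lambda_R$ spans $H^0(C,L)$. Hence the products $s|_C\lambda_R$ cover the image of the multiplication map $H^0(C,(k-2)K_C)\otimes H^0(C,L)\to H^0(C,(k-2)K_C+L)$, which is surjective for every $k\ge 3$ by the base-point-free pencil trick applied to $0\to L^{-1}\to H^0(L)\otimes\cO_C\to L\to 0$ tensored with $(k-2)K_C$. Combined with the freedom of $\bar q\in\bC^{g-5}$, this produces all of $H^0(C,N^\vee_{P/X|C}(kH))$. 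The main obstacle is precisely this conversion: turning the sheaf-theoretic triviality of Proposition~\ref{P:trivial} into a surjectivity on global sections requires both varying the ruling $R$ (which is fixed in Proposition~\ref{P:trivial}) and invoking the base-point-free pencil trick to control a multiplication map, so what might look like a bookkeeping issue becomes a genuine Brill--Noether-style input.
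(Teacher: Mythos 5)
Your overall architecture coincides with the paper's: the $3\times 3$ diagram built from $\I_{P/X}\subset\I_{C/X}\subset\cO_X$ reduces the statement to the three vanishings $H^1(X,\I_{C/X}(kH))=0$, $H^1(P,\cO_P(kH-2C))=0$, $H^1(X,\I_{P/X}(kH))=0$, plus the surjectivity of $H^0(X,\I_{P/X}(kH))\to H^0(C,N^\vee_{P/X|C}(kH))$. The genuine gap is in your treatment of the third vanishing, $H^1(X,\I_{P/X}(kH))=0$, in step (A). This is not an $H^1$ of a line bundle on $\wt P$ and cannot be obtained ``by pulling back to $\wt P$'': since $X$ is arithmetically Cohen--Macaulay, it is equivalent (via $0\to\I_{X/\bP}(k)\to\I_{P/\bP}(k)\to\I_{P/X}(kH)\to 0$) to $H^1(\bP^{g-1},\I_{P/\bP}(k))=0$, i.e.\ to the $k$-normality of the \emph{embedded} surface $P\subset\bP^{g-1}$ --- a statement about the surjectivity of $H^0(\cO_{\bP}(k))\to H^0(\cO_P(kH))$, not about $H^1$ of $\sigma^*\cO_{\bP^2}(d)$ twisted by exceptional divisors. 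In the paper this is the most delicate of the three vanishings: one proves that $\I_{P/\bP}$ is $3$-regular, and the key input is the exact count $h^0(X,\I_{P/X}(2H))=2g-10$, squeezed between a lower bound from the second row of the diagram and an upper bound from the first column; the upper bound uses the triviality $N^\vee_{P/X|C}(2H)\cong L^{\oplus(g-5)}$ and the fact that $h^0(X,\I^2_{C/X}(2H))=0$ because a quadric cannot be singular along the non-degenerate curve $C$. None of this is in your plan, so step (A) is unproved as written.

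The rest of the proposal, in particular step (B), is correct and is a genuine (mild) variant of the paper's argument. The paper fixes one ruling $R$, establishes $H^0(U,\cO_X(H+R))\cong H^0(C,\omega_CL)$ on the smooth locus $U=X\setminus \bP W$, and multiplies these sections (which have at most a pole along $R$) against quadrics in $\I_{P\cup R/X}(2H)$ to hit all of $H^0(N^\vee_{P/X|C}(3H))$ in one stroke, with no multiplication map of line bundles on $C$ to control. You instead keep only honest sections $s\in H^0(\cO_X((k-2)H))$ but let the ruling $R$ vary in the pencil; the price is the surjectivity of $H^0(C,(k-2)K_C)\otimes H^0(C,L)\to H^0(C,(k-2)K_C\otimes L)$. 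That map is indeed surjective: for $k\ge 4$ directly from $H^1((k-2)K_C\otimes L^{-1})=0$, and for $k=3$ from the dimension count $2g-h^0(K_CL^{-1})=2g-3=h^0(K_CL)$ furnished by the base-point-free pencil trick. Note only that your verification of (B) uses Proposition~\ref{P:trivial} for every ruling of the pencil, which is legitimate since that proposition is proved for an arbitrary ruling $R$.
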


\proof
We present the proof in the case when $k=3$, leaving  to the reader the straightforward  generalisation to the case when $k\geq 4$. Consider the following diagram:
\be\label{diag-P-X}
\xymatrix{&0\ar[d]&0\ar[d]&0\ar[d]\\
0\ar[r]& \I \ar[r]\ar[d]& \I^2_{C/X}(3H) \ar[r]\ar[d]& \cO_P(3H-2C) \ar[r]\ar[d]&0 \\
0\ar[r]& \I_{P/X}(3H) \ar[r] \ar[d]_-\alpha & \I_{C/X}(3H)\ar[r]\ar[d]_-\beta& \cO_P(3H-C)\ar[r]\ar[d]_-\gamma&0 \\
 0\ar[r]& N^\vee_{P/X|C}(3H) \ar[r]\ar[d]& N^\vee_{C/X}(3H) \ar[r]\ar[d]& \cO_C(3H-C)\ar[r]\ar[d]&0 \\
 &0&0&0 }
\ee
 where $\I \subset \cO_X(3H)$ is a sheaf defined by the diagram.
 \medskip
 
 \emph{Claim:} \begin{itemize}
\item[(a)] $H^1(X,\I_{C/X}(3H))=0$
\item[(b)] $H^1(P,\cO_P(3H-2C))=0$
\item[(c)] $H^1(X,\I_{P/X}(3H))=0$
\end{itemize}

\medskip

 (a) This  is a consequence of  the fact that $X$ is arithmetically Cohen-Macaulay and $C$ is projectively normal.

(b)  Let $\wt H = \Phi^*H$.  Then we have:
\begin{align}
H^1(\wt P,\cO_{\wt P}(3\wt H-2C)) &= H^1(\wt P, 3K_{\wt P}+C) \notag\\
&= H^1(\wt P,\sigma^*\cO_{\bP^2}(g-9)+E)\notag \\
&= H^1(\bP^2,\cO_{\bP^2}(g-9))=0
\end{align}
  because $\sigma_*\cO(E)= \cO_{\bP^2}$ and $R^1\sigma_*\cO(E)=0$. Moreover we have
\[
H^1(P,\cO_P(3H-2C))= H^1(\wt P,\cO_{\wt P}(3\wt H-2C))
\]
In fact, by Proposition \ref{P:copp}, we get  $\Phi_*\cO_{\wt P}=\cO_P$ and $R^1\Phi_*\cO_{\wt P}=0$.

(c) Consider the exact sequences:

$$
\xymatrix{
&0\ar[d]\\
0 \ar[r]&\I_{X/\bP^{g-1}}(k) \ar[d]\ar[r]&\I_{P/\bP^{g-1}}(k) \ar[r]&\I_{P/X}(kH)\ar[r]&0 \\
&\cO_{\bP^{g-1}}(k)\ar[d]\\
&\cO_X(kH)\ar[d]\\
&0}
$$
From the vertical one and the CM property of $X$ we deduce
\vskip 0.3 cm

$(c_1)$\  $H^1(\bP^{g-1},\I_{X/\bP^{g-1}}(k)) = H^2(\bP^{g-1},\I_{X/\bP^{g-1}}(k))=0$ for all $k$.

From the horizontal one and from $(c_1)$ we deduce:
\vskip 0.3 cm

$(c_2)$ \  $H^1(X,\I_{P/X}(kH)) \cong H^1(\bP^{g-1},\I_{P/\bP^{g-1}}(k))$ for all $k$.

\vskip 0.3 cm

Therefore for proving (c) it suffices to prove that $\I_{P/\bP^{g-1}}$ is 3-regular,  i.e. that
\[
H^i(\bP^{g-1},\I_{P/\bP^{g-1}}(3-i))=0 \ \ \text{for all $i > 0$}
\]
(see e.g. \cite{Sern06},  Prop. 4.1.1).
\vskip 0.3 cm

$i=1$. \ The second row of \eqref{diag-P-X} 
tells us that:
\begin{align}
h^0(X,\I_{P/X}(2H)) &\ge h^0(X,\I_{C/X}(2H))-h^0(\cO_P(2H-C)) \notag \\
&={g-2\choose 2}-3-{g-4\choose 2} = 2g-10\notag
\end{align}
while the first column of \eqref{diag-P-X} 
gives
\[
h^0(X,\I_{P/X}(2H)) \le h^0(C,N^\vee_{P/X|C}(2H))+ h^0(X,\I(-1)) = 2g-10+0 = 2g-10
\]
because $N^\vee_{P/X|C}(2H)\cong L^{\oplus g-5}$ and because $h^0(X,\I^2_{C/X}(2H))=0$, since a quadric cannot be singular along the \emph{non-degenerate} $C$.  Therefore  $h^0(X,\I_{P/X}(2H))=2g-10$.   Since $H^1(X,\I_{C/X}(2H))=0$, we obtain $H^1(X,\I_{P/X}(2H))=0$. Finally,
by $(c_2)$ we have:

\[
H^1(\bP^{g-1},\I_{P/\bP^{g-1}}(2))= H^1(X,\I_{P/X}(2H))=0
\]
  This takes care of the case $i=1$. 
The remaining cases $i \ge 2$ follow easily from the exact sequences:
\[
\xymatrix{
0\ar[r]&\I_{P/\bP^{g-1}}(3-i) \ar[r]&\cO_{\bP^{g-1}}(3-i)\ar[r]&\cO_P((3-i)H) \ar[r]&0}
\]

\emph{The Claim is proved.}

\bigskip

From (a)   it follows that  the Proposition is equivalent to the surjectivity of  $H^0(\beta)$. From (b) and (c)  we deduce that $H^0(\beta)$ is surjective  if $H^0(\alpha)$ is surjective, where:
\[
H^0(\alpha): H^0(X,\I_{P/X}(3H)) \lra H^0(N^\vee_{P/X|C}(3H))\cong
H^0(X,\I_{P\cup R/X}(2H))\otimes H^0(C,\omega_CL)\,.
\]
Here we use  Proposition \ref{P:trivial}  to get the isomorphism:
\[
N^\vee_{P/X|C}(3H) \cong H^0(X,\I_{P\cup R/X}(2H))\otimes \omega_CL
\]
Recall that the morphism $\nu: \wt X \lra X$ induces an isomorphism
\[
\nu_{|\wt U}:  \wt U \lra U
\]
where $U = X \setminus \bP W$ and  $\wt U = \wt X \setminus \nu^{-1}(\bP W)$.
\medskip

\emph{Claim:}\begin{itemize}
\item[(d)] We have isomorphisms:
\[
H^0(U,\cO_X(H+R)) \cong H^0(\wt U, \cO_{\wt X}(\wt H+\wt R))\cong H^0(C,\omega_CL)
\]
\end{itemize}

\medskip

\emph{Proof of (d).}  The first isomorphism is obvious. The second isomorphism is given by the composition:
\[
H^0(\wt U, \cO_{\wt X}(\wt H+\wt R)) \lra H^0(\wt X,\cO_{\wt X}(\wt H+\wt R))\lra
H^0(C,\omega_CL)
\]
Here the first arrow is the inverse of the restriction and is an isomorphism because $\wt X\setminus \wt U$ has high codimension. 
The second arrow too is an isomorphism. This  follows from the exact sequence:
\[
\xymatrix{
0 \ar[r] & \I_{C/\wt X}(\wt H+\wt R) \ar[r] & \cO_{\wt X}(\wt H+\wt R) \ar[r]& \omega_CL\ar[r]&0}
\]
and the fact that $h^0(\wt X,\cO_{\wt X}(\wt H+\wt R))=2g-3= h^0(C,\omega_CL)$, a computation that follows  from the sequence:
\[
\xymatrix{
0 \ar[r] & \cO_{\wt X}(\wt H) \ar[r] & \cO_{\wt X}(\wt H+\wt R) \ar[r]& \cO_{\wt R}(\wt H)\ar[r]&0}
\]
and the fact that  $H^0(\wt X,\I_{C/\wt X}(\wt H+\wt R)) = 0$. This last  equality follows from the exact sequence
\[
\xymatrix{
0 \ar[r] & \I_{C/\wt X}(\wt H) \ar[r] & \I_{C/\wt X}(\wt H+\wt R)  \ar[r]& \I_{C \cap \wt R/ \wt R}(\wt H) \ar[r]&0}
\]
\emph{The Claim is proved.}

\bigskip

From the Claim we deduce the following commutative diagram:
\[
\xymatrix{
H^0(X,\I_{P\cup R/X}(2H))\otimes H^0(U,\cO_X(H+R))\ar[r]\ar[dr]\ar[d]_\cong& H^0(X,\I_{P/X}(3H)) \ar[d]^-{H^0(\alpha)} \\
H^0(C,N^\vee_{P/X|C}(2H-R))\otimes H^0(C,\omega_CL) \ar@{=}[r]&H^0(C,N^\vee_{P/X|C}(3H))}
\]
where the left vertical arrow is an isomorphism.  This implies the surjectivity of $H^0(\alpha)$. \qed

\vskip 0.5 cm
\section{The square of the ideal sheaf of $X\subset \bP^{g-1}$}\label{square2}

We begin  this section with two preliminary lemmas.

\begin{lemma}\label{L:square1}
Let $r\ge 1$ and let $Z:=\bP^1\times \bP^r \subset \bP^{2r+1}$  by the Segre embedding. Then
\[
H^1(Z,\I^2_{Z/\bP^{2r+1}}(k))=0 
\]
for all $k$.
\end{lemma}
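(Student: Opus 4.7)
The plan is to analyze the short exact sequence
\be
0 \to \I_{Z/\bP^{2r+1}}^2 \to \I_{Z/\bP^{2r+1}} \to N^\vee_{Z/\bP^{2r+1}} \to 0,
\ee
abbreviating $\I_Z:=\I_{Z/\bP^{2r+1}}$ and $N^\vee:=N^\vee_{Z/\bP^{2r+1}}$. The Segre variety $Z$ is arithmetically Cohen--Macaulay (a K\"unneth computation on $\cO_Z(k,k)=\cO_{\bP^1}(k)\boxtimes \cO_{\bP^r}(k)$ gives $H^i(\cO_Z(k,k))=0$ for $0<i<r+1$ and all $k$), so $H^1(\bP^{2r+1},\I_Z(k))=0$ for every $k$. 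The long exact cohomology sequence therefore identifies
\[
H^1(\bP^{2r+1},\I_Z^2(k)) \;\cong\; \coker\bigl(H^0(\I_Z(k))\to H^0(Z,N^\vee\otimes\cO_Z(k,k))\bigr),
\]
and it suffices to show this cokernel vanishes (interpreting the statement as cohomology on the ambient projective space, since $\I_Z^2$ is a sheaf on $\bP^{2r+1}$).

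The essential computation is to identify $N^\vee\cong \Omega^1_{\bP^r}\boxtimes \cO_{\bP^1}(-2)$. Realising $Z$ as the variety of rank-one tensors in $\bP(\bC^2\otimes\bC^{r+1})$ and inspecting the normal fibre at $[v_1\otimes v_2]$, one obtains $(\bC v_1)^\vee\otimes(\bC v_2)^\vee\otimes(\bC^2/\bC v_1)\otimes(\bC^{r+1}/\bC v_2)$, which globalises to the product form above (and adjunction confirms the twists: $\det N^\vee=\omega_{\bP}|_Z\otimes\omega_Z^{-1}=\cO_Z(-2r,-r-1)$). By K\"unneth,
\[
H^0(Z,N^\vee\otimes\cO_Z(k,k))\;=\;H^0(\bP^r,\Omega^1_{\bP^r}(k))\otimes H^0(\bP^1,\cO_{\bP^1}(k-2)).
\]
For $k\le 1$ both factors vanish (Bott for the first, negative degree for the second), and the surjectivity is automatic.

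For $k\ge 2$ the plan is to fit the restriction map into the commutative diagram
\[
\xymatrix@C=4pt{
H^0(\I_Z(2))\otimes H^0(\cO_{\bP^{2r+1}}(k-2))\ar@{->>}[r]\ar[d]& H^0(\I_Z(k))\ar[d]\\
H^0(N^\vee(2,2))\otimes H^0(\cO_Z(k-2,k-2))\ar@{->>}[r] & H^0(N^\vee(k,k))
}
\]
(with $N^\vee(a,a):=N^\vee\otimes \cO_Z(a,a)$) and to show that both horizontal arrows and the left vertical arrow are surjective. The top arrow is surjective because $\I_Z$ is generated in degree $2$ by the $2\times 2$ minors of the defining matrix and is $2$-regular (every generator of the $i$-th term of the Eagon--Northcott resolution sits in degree $i+1$). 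The bottom arrow is surjective because the Koszul complex on $x_0,\dots,x_r$ presents every section of $\Omega^1_{\bP^r}(k)$ as an $\cO_{\bP^r}$-linear combination of the $\eta_{ij}:=x_i\,dx_j-x_j\,dx_i$ spanning $H^0(\Omega^1_{\bP^r}(2))$. The left vertical arrow is the tensor product of the isomorphism $H^0(\I_Z(2))\xrightarrow{\sim}H^0(N^\vee(2,2))$ sending $y_iz_j-y_jz_i\mapsto \eta_{ij}\otimes 1$ (injective since $H^0(\I_Z^2(2))=0$, and thus an isomorphism by the dimension count $\binom{r+1}{2}$) with the surjection $H^0(\cO_{\bP^{2r+1}}(k-2))\twoheadrightarrow H^0(\cO_Z(k-2,k-2))$ from projective normality. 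A diagram chase then forces surjectivity of the right vertical arrow.

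The main obstacle is the clean identification of the conormal bundle in the product form $\Omega^1_{\bP^r}\boxtimes\cO_{\bP^1}(-2)$; once this is in hand, K\"unneth decouples the problem into a Koszul surjectivity on $\bP^r$ and the $2$-regularity of the determinantal ideal on $\bP^{2r+1}$, both of which are classical.
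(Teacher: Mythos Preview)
Your proof is correct and takes a genuinely different route from the paper's. The paper cites \cite{Wah97} for all $k\neq 2$ and, for $k=2$, invokes Wahl's identification $H^1(\I_Z^2(2))\cong\ker\Phi_H$ with the kernel of the Gaussian map
\[
\Phi_H:\wedge^2 H^0(Z,\cO_Z(H))\lra H^0(Z,\Omega^1_Z(2H)),
\]
and then shows $\Phi_H$ is an isomorphism by decomposing both source and target via K\"unneth, using $\Omega^1_Z=\pi_1^*\Omega^1_{\bP^1}\oplus\pi_2^*\Omega^1_{\bP^r}$. You instead work with the conormal sequence and the identification $N^\vee_{Z/\bP^{2r+1}}\cong \cO_{\bP^1}(-2)\boxtimes\Omega^1_{\bP^r}$ (equivalently $N\cong T_{\bP^1}\boxtimes T_{\bP^r}$, the standard normal bundle of a Segre embedding), reducing the question to the surjectivity of $H^0(\I_Z(k))\to H^0(N^\vee(k,k))$, which you settle for all $k$ at once via the $2$-regularity of the determinantal ideal and the Koszul presentation of $H^0(\Omega^1_{\bP^r}(k))$. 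Your approach is more self-contained---it does not defer to \cite{Wah97}---and handles all $k$ uniformly; the paper's approach, by contrast, ties the case $k=2$ to the Gaussian map and to the machinery already set up in Wahl's paper, which is thematically aligned with the rest of the argument. Both arguments ultimately rest on K\"unneth, but applied to different bundles: you to $N^\vee_{Z/\bP}$, the paper to $\Omega^1_Z$.
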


\proof  For $k\ne 2$ the result is already proved in \cite{Wah97}, (1.3.2) and Proposition 4.4.  Moreover we know that:
\[
H^1(Z,\I^2_{Z/\bP^{2r+1}}(2)) = \ker(\Phi_H)
\]
where 
\[
\Phi_H: \wedge^2 H^0(Z,\cO_Z(H)) \lra H^0(Z,\Omega_Z^1(2H))
\]
is the gaussian map of $H=\cO_Z(1)$   (\cite{Wah97}, Proposition 1.8).  Therefore it suffices to prove that
\emph{$\Phi_H$ is an isomorphism.}

\medskip

Set  $E=H^0(\bP^1,\cO_{\bP^1}(1))$,  $F=H^0(\bP^r,\cO_{\bP^r}(1))$ and consider the projections:
\[
\xymatrix{
Z \ar[r]^-{\pi_2}\ar[d]_-{\pi_1}& \bP^r \\
\bP^1}
\]
Then we have:
\[
H^0(Z,\cO_Z(H)) = E\otimes F 
\]
and a direct sum decomposition:
\begin{equation}\label{E:gauss1}
\wedge^2(E\otimes F) = \left(\wedge^2E\otimes S^2F\right)
\oplus \left(S^2E\otimes \wedge^2F\right)
\end{equation}
 On the other hand we have:
 \[
 \Omega^1_Z (2H) = (\pi_1^*\Omega^1_{\bP^1})(2H)\oplus (\pi_2^*\Omega^1_{\bP^r})(2H)
 \]
 and from K\" unneth formula we get 
 \begin{equation}\label{E:gauss2}
 H^0(Z,\Omega^1_Z (2H)) = \left[H^0(\bP^1,\Omega^1_{\bP^1}(2))\otimes S^2 F \right]
 \oplus  \left[S^2E\otimes H^0(\bP^r,\Omega^1_{\bP^r}(2))\right]
 \end{equation}
 Comparing \eqref{E:gauss1} and \eqref{E:gauss2}  we see that the homomorphism
 \[
 \Phi_H:\wedge^2(E\otimes F) \lra H^0(Z,\Omega^1_Z (2H))
 \]
 decomposes as the sum of the two homomorphisms:
 \[
 [\Phi_{\cO_{\bP^1}(1)}]\otimes 1_{S^2F}: \wedge^2E\otimes S^2F \lra H^0(\bP^1,\Omega^1_{\bP^1}(2))
 \otimes S^2F
 \]
 and
 \[
 1_{S^2E}\otimes \Phi_{\cO_{\bP^r(1)}}:S^2E\otimes \wedge^2F \lra 
 S^2E\otimes H^0(\bP^r,\Omega^1_{\bP^r}(2))
 \]
 Both these homomorphisms are isomorphisms, and therefore so is $\Phi_H$. \qed
 
 \begin{lemma}\label{L:square2}
 Let $Y \subset \bP^{r+1}$, $r \ge 2$, be non-degenerate and linearly normal and  consider  a hyperplane section $Z=H\cap Y\subset H\cong\bP^r$. If $H^1(Z,\I^2_{Z/\bP^r}(k))=0$ for all $k$ then 
 $H^1(Y,\I^2_{Y/\bP^{r+1}}(k))=0$ for all $k$.
 \end{lemma}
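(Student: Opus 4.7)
The idea is to use the standard restriction short exact sequence on $\bP^{r+1}$ induced by cutting with the hyperplane $H$, and then to deduce the desired vanishing from a monotonicity argument in $k$ combined with a structural vanishing in very negative degrees.

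First I would arrange that $h$, a linear form defining $H$, is a non-zero divisor on $\cO_{\bP^{r+1}}/\I_Y^2$, the structure sheaf of the first-order infinitesimal neighbourhood $Y^{(1)}$ of $Y$. This can be achieved for a generic hyperplane because this sheaf has only finitely many associated primes, none of which equals $\bP^{r+1}$; the non-degeneracy of $Y$ ensures a generic $H$ avoids them. With this choice, $\I_Y^2\otimes\cO_H\cong\I_{Z/H}^2$ and we obtain the short exact sequence
\[
0 \to \I_Y^2(k-1) \xrightarrow{\,\cdot h\,} \I_Y^2(k) \to \I_{Z/H}^2(k) \to 0 \qquad (\dagger)
\]
on $\bP^{r+1}$, for every integer $k$.

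Taking the long exact cohomology sequence of $(\dagger)$ and using the hypothesis $H^1(H,\I_{Z/H}^2(k))=0$ for all $k$, I obtain that the restriction map
\[
H^1(\bP^{r+1},\I_Y^2(k-1)) \twoheadrightarrow H^1(\bP^{r+1},\I_Y^2(k))
\]
is surjective for every $k$; equivalently, the dimension $f(k):=\dim H^1(\bP^{r+1},\I_Y^2(k))$ is a non-increasing function of $k$.

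I would then show that $f(k)=0$ for $k$ sufficiently negative. From the structure sequence
\[
0 \to \I_Y^2(k) \to \cO_{\bP^{r+1}}(k) \to \cO_{Y^{(1)}}(k) \to 0
\]
and the vanishing $H^1(\bP^{r+1},\cO(k))=0$ (valid since $r+1\ge 3$), the group $H^1(\bP^{r+1},\I_Y^2(k))$ is identified with the cokernel of $H^0(\bP^{r+1},\cO(k))\to H^0(Y^{(1)},\cO_{Y^{(1)}}(k))$. For $k\ll 0$ both groups vanish: the first because $\cO_{\bP^{r+1}}(k)$ has no sections in negative degrees, the second because $\cO_{Y^{(1)}}(1)$ is ample on the projective scheme $Y^{(1)}$, and any coherent sheaf twisted by sufficiently negative powers of an ample line bundle has trivial global sections.

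These two facts combine to give $f\equiv 0$: $f$ is non-increasing, non-negative, and equals zero for some $k_0$ arbitrarily small, whence $f(k)\le f(k_0)=0$ for all $k\ge k_0$, i.e.\ for every $k$.

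The main obstacle, I expect, is justifying that the sequence $(\dagger)$ is exact for the specific hyperplane $H$ appearing in the lemma's hypothesis; this boils down to the non-zero divisor condition on $h$ modulo $\I_Y^2$. For a general $H$ this is a standard codimension count on associated primes, but it has to be compatible with the $H$ for which the inductive hypothesis $H^1(H,\I_{Z/H}^2(k))=0$ is available — compatibility that is ensured by the linear normality and non-degeneracy hypotheses on $Y$, which together guarantee that generic hyperplanes produce hyperplane sections of the requisite form.
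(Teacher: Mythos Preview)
Your proof is correct and follows essentially the same line as the paper's: both use the restriction sequence $(\dagger)$ to obtain the surjections $H^1(\I_Y^2(k-1))\twoheadrightarrow H^1(\I_Y^2(k))$ for every $k$, and then conclude by establishing $H^1(\I_Y^2(k))=0$ for $k\ll 0$. The only cosmetic difference is that the paper obtains the latter vanishing from the sequence $0\to\I_Y^2\to\I_Y\to N^\vee_{Y/\bP^{r+1}}\to 0$ (using $H^0(N^\vee(k))=0=H^1(\I_Y(k))$ for $k\ll 0$) rather than your first-infinitesimal-neighbourhood sequence, and it simply asserts the exactness of $(\dagger)$ without the discussion of associated primes that you flag.
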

 
 \proof We have exact sequences:
 \begin{equation}\label{E:gauss3}
 \xymatrix{
 0\ar[r]&\I^2_{Y/\bP^{r+1}}(k-1) \ar[r]& \I^2_{Y/\bP^{r+1}}(k) \ar[r] & \I^2_{Z/\bP^r}(k) \ar[r]&0 }
 \end{equation}
 and
 \begin{equation}\label{E:gauss4}
 \xymatrix{
 0\ar[r]&\I^2_{Y/\bP^{r+1}}(k) \ar[r] &\I_{Y/\bP^{r+1}}(k) \ar[r] & N^\vee_{Y/\bP^{r+1}}(k)\ar[r]&0}
 \end{equation}
Since 
\[
H^0(Y,N^\vee_{Y/\bP^{r+1}}(k))= 0 = H^1(\bP^{r+1},\I_{Y/\bP^{r+1}}(k))
\]
for $k \ll 0$, from   \eqref{E:gauss4} we deduce that $H^1(\bP^{r+1},\I^2_{Y/\bP^{r+1}}(k))=0$ for $k\ll 0$. 
On the other hand the hypothesis and \eqref{E:gauss3} imply that
\[
H^1(\bP^{r+1},\I^2_{Y/\bP^{r+1}}(k-1)) \lra H^1(\bP^{r+1},\I^2_{Y/\bP^{r+1}}(k))
\]
is a surjection for all $k$. This concludes the proof. \qed

\bigskip

Thes two  lemmas together imply the following:

\begin{prop}\label{P:square2}
\[
H^1(\bP,\I^2_{X/\bP}(k))=0
\]
for all $k$.
\end{prop}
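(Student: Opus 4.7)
The plan is a direct induction on the vertex dimension of the cone, combining Lemma \ref{L:square1} (which gives the base case) with Lemma \ref{L:square2} (which provides the inductive step).

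Recall from \S 2 that $X\subset \bP^{g-1}$ is concretely realized as the cone, with vertex $\bP W\cong \bP^{g-7}$, over the Segre variety $Z_0:=\bP^1\times \bP^2\subset \bP^5$. More precisely, choosing a basis of $W$ gives a filtration
\[
Z_0=Y^{(0)}\subset Y^{(1)}\subset \cdots \subset Y^{(g-6)}=X,
\]
where each $Y^{(j)}\subset \bP^{5+j}$ is the cone over $Z_0$ with vertex a $\bP^{j-1}$ (with the convention $\bP^{-1}=\emptyset$). By construction, for each $j\ge 1$ there is a hyperplane $H_j\cong \bP^{4+j}$ in $\bP^{5+j}$ avoiding the apex point of $Y^{(j)}$ whose intersection with $Y^{(j)}$ is canonically identified with $Y^{(j-1)}\subset H_j$.

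First I would establish the base case. Lemma \ref{L:square1} with $r=2$ gives
\[
H^1\bigl(Z_0,\ \I^2_{Z_0/\bP^5}(k)\bigr)=0 \quad \text{for all } k.
\]
Next I would run the inductive step: assuming $H^1(Y^{(j-1)},\I^2_{Y^{(j-1)}/\bP^{4+j}}(k))=0$ for all $k$, I apply Lemma \ref{L:square2} with $Y=Y^{(j)}\subset \bP^{5+j}$ and $Z=Y^{(j-1)}=Y^{(j)}\cap H_j$ to conclude the same vanishing for $Y^{(j)}$. Iterating $g-6$ times yields the proposition for $Y^{(g-6)}=X$.

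The one thing that needs explicit justification before invoking Lemma \ref{L:square2} is that each $Y^{(j)}$ is non-degenerate and linearly normal. Non-degeneracy is immediate since cones over non-degenerate varieties are non-degenerate. Linear normality is equivalent to projective normality in degree $1$ and follows from the fact that $Y^{(j)}$ is arithmetically Cohen--Macaulay: this was already observed for $X$ in \S 2, and the same argument (or the general fact that a cone over an arithmetically Cohen--Macaulay projectively normal variety retains both properties) applies to each $Y^{(j)}$. The identification $Y^{(j)}\cap H_j\cong Y^{(j-1)}$ is the standard statement that a hyperplane section of a cone, transverse to the vertex, is a cone of one lower vertex dimension over the same base.

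There is no serious obstacle here: the real content of the argument lies in Lemmas \ref{L:square1} and \ref{L:square2}. The mildest point to check is the cone/hyperplane-section identification at each step, but this is purely linear algebra on the homogeneous coordinates, so the induction goes through cleanly.
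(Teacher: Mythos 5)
Your proposal is correct and follows exactly the paper's own argument: the paper's proof is precisely the repeated application of Lemma \ref{L:square2}, starting from Lemma \ref{L:square1} for the Segre variety $\bP^1\times\bP^2\subset\bP^5$, peeling off one vertex dimension of the cone at each hyperplane section. The extra details you supply (the filtration by cones $Y^{(j)}$, non-degeneracy and linear normality at each step) are exactly the routine verifications the paper leaves implicit.
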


\proof Recalling that $X$ is a cone over the smooth scroll $\bP^1\times\bP^2\subset \bP^5$ the result is obtained by applying repeatedly Lemma \ref{L:square2} and using Lemma \ref{L:square1} at the first step.   \qed

\bigskip

In \cite{Wah97} a similar result is proved for \emph{smooth} scrolls.  In our case  we cannot apply  Proposition 4.6 of \cite{Wah97} directly.

\vskip 0.5 cm
\section{Proof of Theorem \ref{T:main}}

The proof will follow from an adaptation to our case  of Proposition 3.2 of \cite{Wah97}.

Consider the following commutative diagram:
\[
\xymatrix{
0\ar[r]& H^0(\bP,\I_{X/\bP}(k))\ar[r]\ar[d]^-{\alpha_k}&H^0(\bP,\I_{C/\bP}(k))\ar[r]\ar[dd]^-{\gamma_k}&
H^0(X,\I_{C/X}(k))\ar[r]\ar[dd]^-{\delta_k}&0 \\
&H^0(X,N^\vee_{X/\bP}(kH))\ar[d]^-{\beta_k} \\
0\ar[r] & H^0(C,N^\vee_{X/\bP}(kH)\otimes\cO_C) \ar[r]& H^0(C,N^\vee_{C/\bP}(k))\ar[r]& H^0(C,N^\vee_{C/X}(k))}
\]
Since $H^1(\bP,\I_{C/\bP}(k))=0$ for $k \ge 3$ it will suffice to prove that $\gamma_k$ is surjective for all $k \ge 3$. From Propositions \ref{P:square1} and \ref{P:square2} we know that $\alpha_k$ and $\delta_k$ are surjective for $k \ge 3$.  Therefore it suffices to prove that  $\beta_k$ is surjective for all $k \ge 3$.

\medskip

Since $X$ is defined as the degeneracy locus of the matrix of the Petri map
\[
\mu_0(L): H^0(C,L)\otimes V \lra H^0(C,\omega_C)
\]
 a free resolution of $\I_{X/\bP}$ can be written under the form:
\[
\xymatrix{
0\ar[r]& H^0(C,L)\otimes \cO_{\bP}(-3) \ar[r]^-\psi & V^\vee\otimes\cO_{\bP}(-2)\ar[r]& \I_{X/\bP}\ar[r]&0}
\]
Restricting to $X$ we obtain:
\[
\xymatrix{
0\ar[r]&\R\ar[r]&H^0(C,L)\otimes \cO_X(-3H)\ar[r]^-\psi & V^\vee\otimes\cO_X(-2H)\ar[r]&
N^\vee_{X/\bP}\ar[r]&0}
\]
where neither $\R$ nor $N^\vee_{X/\bP}$ is locally free because the rank of $\psi$ drops further on $\bP W$.

Restricting to $C$, which is disjoint from $\bP W$, we obtain from the base-point-free pencil trick an exact sequence as follows:
\[
\xymatrix{
0\ar[r]&\omega_C^{-3}L^{-1}\ar[r]&H^0(C,L)\otimes\omega_C^{-3}\ar[r]& V^\vee\otimes\omega_C^{-2}\ar[r]& N^\vee_{X/\bP|C}\ar[r]&0}
\]
From  this sequence we deduce the exact sequence:
\[
\xymatrix{
0\ar[r]& \omega_C^{-3}L\ar[r]& V^\vee\otimes\omega_C^{-2}\ar[r]& N^\vee_{X/\bP|C}\ar[r]&0}
\]
Twisting this one by $\omega_C^3$ we obtain the exact sequence:
\[
\xymatrix{
0\ar[r]& L\ar[r]& V^\vee\otimes\omega_C\ar[r]& N^\vee_{X/\bP|C}\otimes\omega_C^3\ar@{=}[d]\ar[r]&0\\
&&&N^\vee_{X/\bP}(3H)\otimes \cO_C}
\]
Since the induced homomorphism:
\[
H^1(C,L) \lra V^\vee\otimes H^1(C,\omega_C)
\]
is the isomorphism given by Serre's duality, we obtain that the homomorphism
\[
\phi:V^\vee\otimes H^0(C,\omega_C) \lra  H^0(C,N^\vee_{X/\bP}(3H)\otimes \cO_C)
\]
is surjective. Now   the diagram:
\[
\xymatrix{
V^\vee\otimes H^0(X,H)\ar[d]^-\cong \ar[r]& H^0(X,N^\vee_{X/\bP}(3H))\ar[d]^-{\beta_3}\\
V^\vee\otimes H^0(C,\omega_C)\ar[r]^-\phi& H^0(C,N^\vee_{X/\bP}(3H)\otimes \cO_C)}
\]
shows that $\beta_3$ is surjective. The surjectivity of $\beta_k$ for $k \ge 4$ is proved similarly. 
\qed
\vskip 1 cm

\part{}

\vskip 0.8 cm
\section{Surfaces with canonical sections}\label{S:fake}

We follow the terminology and the notation of \cite{E84}, \cite{E83},\cite{CL02}.

\begin{defin} A surface with canonical sections is a projective surface $\ov S \subset \mathbb{P}^g$,
with $g\geq3$, whose general section is a smooth canonical curve of genus $g$. We also require that $\ov S$ is not a cone.

\end{defin}

From the definition it follows that $\ov S$ is irreducible with at most  isolated singularities. Consider the diagram

\be\label{E:fakediag}
\xymatrix{
S\ar[d]_p\ar[r]^q&\ov S\subset\bP^g\\
S_0
}
\ee

where $q$ is the minimal resolution and $S_0$ is a minimal model of $S$. The smooth surface  $S$ is equipped with a  big and nef  divisor $C$, where  $\cO_S(C)=q^*\cO_{\ov S}(1))$,
with
 $$
 C^2=2g-2\,, \qquad K_S \otimes \cO_C=\cO_C \,,\qquad \, h^0(S, C)=g+1\,.
 $$

In \cite{E83}, \cite{E84}  Epema proves the following result (see also \cite{Um81} and Proposition 2.2 in \cite{CL02}).

\begin{prop}[Epema \cite{E84}]\label{epema} Let  $\ov S \subset \bP^g$ be a surface with canonical sections. Then

a) $\ov S$ is projectively normal,

b) $S$ is either a  minimal $K3$ surface (possibly with rational singularities) or a ruled surface,

c) $h^0(S, -K_S)=1$.
\end{prop}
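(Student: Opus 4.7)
The plan is to extract consequences from the identity $K_S\cdot C=0$, which follows from adjunction applied to the canonical section: since $\cO_C(C)=K_C$ and $K_C=(K_S+C)|_C$, one obtains $K_S\otimes\cO_C\cong\cO_C$, hence $K_S\cdot C=0$. Combining this with the bigness and nefness of $C$, which has $C^2=2g-2>0$, via the Hodge Index Theorem gives $K_S^2\le 0$, with equality if and only if $K_S$ is numerically trivial. The proof then splits according to the Enriques--Kodaira classification of $S$.

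If $K_S$ is pseudo-effective, the Zariski decomposition $K_S=P+N$ combined with $K_S\cdot C=0$ (and $P\cdot C\ge 0$, $N\cdot C\ge 0$ from nefness of $C$) forces $P\cdot C=N\cdot C=0$; Hodge Index then gives $P\equiv 0$, and $N$ must be supported on curves contracted by $q:S\to\ov S$, with a further analysis using $K_S\cdot E\le 0$ on exceptional curves pinning down $K_S\equiv 0$. Then $S$ is minimal of Kodaira dimension zero, so by the classification it is K3, Enriques, abelian, or bielliptic. Riemann--Roch gives $\chi(\cO_S(C))=\chi(\cO_S)+g-1$, and Kawamata--Viehweg vanishing applied to the big nef divisor $K_S+C$ yields $h^i(S,C)=0$ for $i>0$, so $h^0(S,C)=\chi(\cO_S)+g-1$. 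The hypothesis $h^0(S,C)=g+1$ then forces $\chi(\cO_S)=2$, which among surfaces of Kodaira dimension zero characterises K3 (Enriques has $\chi=1$; abelian and bielliptic have $\chi=0$). Both (b) and (c) are immediate here since $K_S=0$ gives $h^0(-K_S)=h^0(\cO_S)=1$.

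If $K_S$ is not pseudo-effective, then $S$ has $\kappa=-\infty$ and is therefore ruled, proving (b) in this branch. For (c), I would write $K_S=p^*K_{S_0}+\sum a_iE_i$ for the sequence of blow-ups $p:S\to S_0$ to the minimal ruled model and exploit the equation $K_S\cdot C=0$ together with $E_i\cdot C\ge 0$ to restrict the configuration of exceptional divisors. Then I would analyse the anticanonical system on $S_0$ (which is either $\bP^2$ or a geometrically ruled surface $\bP(\E)\to\Gamma$), the base genus $b=g(\Gamma)$ being further constrained by Riemann--Roch for $C$ on $S$ (using $\chi(\cO_S)=1-b$ together with $h^0(S,C)=g+1$); this should leave only finitely many configurations, in each of which $h^0(-K_S)=1$ can be checked directly.

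For (a), I would lift projective normality from the general hyperplane section, which is a non-hyperelliptic canonical curve of genus $g\ge 3$ and hence projectively normal by Noether's theorem, to $\ov S$ via the exact sequences $0\to\cO_{\ov S}(k-1)\to\cO_{\ov S}(k)\to\cO_C(k)\to 0$ and induction on $k$. This reduces to the vanishings $H^1(\ov S,\cO_{\ov S}(k))=0$ for $k\ge 0$, which come from the corresponding vanishings on $S$ via $q_*\cO_S=\cO_{\ov S}$ (with $R^1q_*\cO_S=0$ once the singularities of $\ov S$ are shown to be rational, as happens in both branches above). The main obstacle I anticipate is the ruled branch: controlling $h^0(-K_S)$ after an a priori uncontrolled sequence of blow-ups $p:S\to S_0$ requires delicate use of the canonical-section hypothesis to constrain both $S_0$ and the positions of the blown-up points, and this is where Epema's actual argument presumably invests the most care.
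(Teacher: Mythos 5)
This proposition is not proved in the paper at all; it is quoted from Epema \cite{E84} (and Umezu \cite{Um81}), so your proposal has to stand on its own. The part that does stand is (b): adjunction gives $K_S\cdot C=0$, Hodge index gives $K_S^2\le 0$, and in the pseudo-effective branch the Zariski decomposition together with the negative definiteness of the $q$-exceptional locus forces $K_S\equiv 0$ --- though note the sign: a \emph{minimal} resolution is characterised by $K_S\cdot E\ge 0$ (not $\le 0$) on exceptional curves, and it is this inequality combined with $N^2<0$ that kills the negative part. Your computation that $\chi(\cO_S)=2$ isolates K3 among the $\kappa=0$ classes is correct, and the other branch is ruled. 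This is a clean route to (b).

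There are, however, two genuine gaps. First, your argument for (a) relies on the singularities of $\ov S$ being rational ``in both branches''; this is false in the ruled branch, where $q$ contracts the anticanonical cycle $Z$ (with $Z\cdot C=0$) to elliptic Gorenstein singularities with $p_g\ge 1$ --- the paper itself, quoting Umezu and Laufer, calls them minimally elliptic or of geometric genus two. Correspondingly $R^1q_*\cO_S\ne 0$, and in fact $h^1(S,\cO_S(C))=1+\gamma\ne 0$ for ruled $S$, so the vanishing $H^1(\ov S,\cO_{\ov S}(k))=0$ cannot be imported from $S$. (Projective normality is better proved directly in $\bP^g$ from the sequence $0\to\I_{\ov S}(k-1)\to\I_{\ov S}(k)\to\I_{C/\bP^{g-1}}(k)\to 0$, Noether's theorem for the non-hyperelliptic canonical curve $C$, and $H^1(\I_{\ov S})=0$, with no appeal to rationality of the singularities.) Second, and more seriously, in the ruled branch you never establish that $-K_S$ is effective, which is the substantive half of (c). The inequality $h^0(S,-K_S)\le 1$ is easy: any anticanonical divisor meets $C$ trivially, hence is supported on the negative-definite $q$-exceptional locus, and a positive-dimensional linear system cannot consist of divisors supported on a fixed such configuration. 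But the lower bound $h^0(S,-K_S)\ge 1$ is equivalent to $\omega_{\ov S}\cong\cO_{\ov S}$ and is the actual content of Umezu's and Epema's theorems; it does not follow from checking ``finitely many configurations'' --- there is no finite list, since $g$, the minimal model and the blown-up points all vary --- and your sketch gives no mechanism for producing the anticanonical section. This is precisely the point where the cited references invest their effort, and the proposal leaves it unproved.
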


 If $\ov S$ is not a   K3 surface, we have two possibilities for $S_0$:
either

$$
S_0=\bP^2\,.
$$
or
$$
S_0=\bP E\overset \pi\longrightarrow \Gamma
$$
where $\Gamma$ is a smooth curve of genus $\gamma$ and $E$ is a  rank-2 vector bundle on $\Gamma$.

\subsection{Notation for the case $S_0=\bP^2$}\label{caso-p2}

From Proposition  \ref{epema},  there is a unique section  
$J \in H^0(S, -K_S)$. This curve is contracted to a point by   $q$. We set
$$
C_0=p(C)\subset \bP^2\,,\qquad J_0=p(J) \subset \bP^2
$$
Then $C_0$ is an irreducible plane curve of degree $d$ and $J_0$ is a plane cubic passing through the singular points of $C_0$.
We denote by the same symbol $\ell$, the class of a line in $\bP^2$ and its pull-back, via $p$, to $S$.
We have
 \be\label{1}
C\sim d\ell-\sum_{i=1}^h\nu_i E_i\,,\qquad \nu_1\geq \nu_2\geq\cdots\geq\nu_h\geq 1
\ee
where $E_i$ is a chain of smooth rational curves, as in Section \ref{S:digression}, with $E_i\cdot E_j=-\delta_{ij}$. We set
$$
p_i=p(E_i)\,\quad i=1,\dots,h
$$
where $p_1,\dots, p_h$ are  (possibly  infinitely near)  points in $\bP^2$.

We have
$$
K_S\sim -3\ell+\sum_{i=1}^h E_i\,,\qquad J\sim 3\ell-\sum_{i=1}^h E_i\,,\qquad K_S\cdot C= J\cdot C=0, \quad J^2=K_S^2=9-h
$$

$$
|C|=\langle|K_S+C|+J, C\rangle\,,\qquad \dim|C|=g\,,\qquad \phi_{|C|}: S\to \ov S\subset\bP^g
$$
(here $\langle A , B\rangle$ stands for the linear system spanned by $A$ and $B$).
Finally,
since $K_S\cdot C=0$, we have
\be\label{bez}
3d=\sum_{i=1}^h\nu_i
\ee
so that
\be\label{genus}
2g-2=d^2-\sum_{i=1}^h\nu_i^2
\ee
Notice that $J^2<0$ and $J \cdot C=0$ imply $h \ge 10$.

\subsection{Notation for the case $S_0=\bP(E)$}\label{caso-pe}
In Proposition 1.4, p. 34,  of \cite{E84}, Epema proves that, after replacing $S_0$ with another minimal model, if necessary, we can assume that
\be
E=\cO_\Gamma\oplus\cO_\Gamma(D)\,,\qquad \deg D=-e\,,\qquad 
\ee
and furthemore $e>0$ (cf. Remark \ref{e>1}).
We denote by $Z$ the unique member of $|-K_S|$. We have 
$$
Z\cdot C=0\,,\quad C^2=2g-2\,,
$$
$$
|C|=\langle|K_S+C|+Z, C\rangle\,,\qquad \dim|C|=g\,,\qquad \phi_{|C|}: S\to \ov S\subset\bP^g
$$

We have the following standard  situation:
$$
\Pic(S_0)=\sigma\bZ\oplus \pi^*\Pic(\Gamma)\,,\quad \sigma^2=-e\,, \quad \sigma\cdot \pi^*(x) =1\,,\quad x\in \Gamma
$$
Also
$$
K_{S_0}\sim -2\sigma+\pi^*(K_\Gamma+D)
$$
Let $C_0$ be the image of $C$ under the morphism $p:S\to S_0$. Then
$$
C_0\sim a(\sigma-\pi^*(D))
$$
for some $a \ge 1$,
so that
$$
C\sim p^*(C_{0})-\sum_{i=1}^h\nu_iE_i\\
=ap^*(\sigma-\pi^*D)-\sum_{i=1}^h\nu_iE_i\\
$$

where, as before, $E_i$ is a chain of smooth rational curves with $E_i\cdot E_j=-\delta_{ij}$,  and where we assume
$$
\nu_1\geq \nu_2\geq\cdots\geq\nu_h\geq 1
$$
We also set
$$
p_i=p(E_i)\,\quad i=1,\dots,h
$$
where $p_1,\dots, p_h$ are (possibly infinitely near)  points in $\bP(E)$.
We then have
$$
\aligned
K_{S}&\sim p^*(K_{S_0})+\sum_{i=1}^hE_i
=p^*(-2\sigma+\pi^*(K_\Gamma+D))+\sum_{i=1}^hE_i\\
Z&\sim p^*(2\sigma-\pi^*(K_\Gamma+D))-\sum_{i=1}^hE_i
\endaligned
$$
Notice that, since $p^*\sigma$ and $Z$ are effective divisors with $p^*\sigma\cdot Z<0$, we must have
\be\label{anti-z}
Z=p^*\sigma+J
\ee
where $J$ is an effective divisor with
\be\label{comp-z}
J\sim p^*(\sigma-\pi^*(K_\Gamma+D))-\sum_{i=1}^hE_i
\ee
Notice that in case $\gamma=1$
\be\label{int-sigma-j}
p^*(\sigma)^2=-e\,,\quad J^2=e-h\,,\quad p^*(\sigma)\cdot J=0.
\ee
In the case $\gamma=1$ we have more precisely $-D=\sum_{i=1}^e x_i$, $\sigma-\pi^*(K_\Gamma+D)=\sigma+\sum f_i$ where $f_i=\pi^{-1}(x_i)$.  All the members of the anticanonical system $|K_{S_0}|$ are reducible and of the form $\sigma+J_0$ with $J_0\in |\sigma+\sum f_i|$. Moreover $\dim(|\sigma+\sum_i f_i|)=e$, as it follows from the exact sequence on $S_0$:
\[
\xymatrix{
0\ar[r] &\cO(\sum f_i) \ar[r]& \cO(\sigma+\sum f_i)\ar[r] & \cO_\sigma \ar[r] & 0}
\]
and from $H^1(\cO(\sum f_i))=0$.  An analogous exact sequence shows that, for each effective divisor $\sum_{k=1}^{e-1} y_k$ on $\Gamma$ of degree $e-1$ we have $\dim(|\sigma+\sum \pi^{-1}(y_k)|)=e-2$.  Therefore the general member of  $|\sigma+\sum_i f_i|$ is irreducible and, if $J_0\in |\sigma+\sum_i f_i|$ contains a fibre of $\pi$ as a component, then it splits as $J_0=\sigma+ \sum \pi^{-1}(y_i)$, for some $\sum y_i \in |D|$.  Accordingly,  $J\subset S$ is either irreducible or splits as $p^*\sigma+\sum \varphi_i$ where the $\varphi_i$'s are the proper transforms of fibres of $\pi$.

Epema's Proposition 1.4  in \cite{E84} (see also Proposition 2.6 in \cite{CL02}), contains the following facts and some more: see for instance Chapter IV in \cite{E84} for the slight modifications
in case $ \gamma=1$.  

\begin{prop}[\cite{E84}]\label{epema2} Suppose $S_0=\bP(E)$. After replacing $S_0$ with another minimal model we can assume that
\begin{subequations}\label{grp}
\begin{align}
&E=\cO_\Gamma\oplus\cO_\Gamma(D)\,,\qquad \deg D=-e\label{1}\\
&\text{Either}\quad D\sim-K_\Gamma\,,\quad \text{or}\quad h \geq 1\quad \text{and}\quad e > 2\gamma-2 \label{2b}\\
&\nu_i \leq a-1\,,\quad i=1,\dots,h \label{3}\\
 &\sum_{i=1}^h\nu_i=a(e-2\gamma+2)\label{4d}\\
 &2g-2=a^2e-\sum_{i=1}^h\nu_i^2 \label{5e}\\
 &2a^2(\gamma-1)+a(e-2\gamma+2) \leq 2g-2 \leq a^2e-a(e-2\gamma+2)   \label{6}\\
  &(a-1)h\leq2\left[g-1-a^2(\gamma-1)\right]\,, \quad\text{for}\quad a\geq 2\label{7}\\
  &\text{None of the base points of}\quad |C_0| \quad\text{lie on}\quad \sigma\subset S_0. \label{8}
\end{align}
\end{subequations}
\end{prop}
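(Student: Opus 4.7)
The plan is to deduce Proposition~\ref{epema2} from a handful of intersection-theoretic identities on $S$ combined with Epema's analysis of the structure of $|C|$ on a ruled surface. My first step would be to dispose of the \emph{normalization} statements (a), (b) and (h), which concern the minimal model $S_0=\bP(E)$. Following Epema, one performs elementary transformations of $\bP(E)$ centered at base points of $|C_0|$ lying on the negative section $\sigma$ until one reaches a minimal model where $E\cong \cO_\Gamma\oplus\cO_\Gamma(D)$ with $\deg D=-e$ and no base point of $|C_0|$ lies on $\sigma$; this gives (a) and (h). The dichotomy in (b) captures exactly when this process terminates: either $D\sim -K_\Gamma$, or $h\ge 1$ and $e>2\gamma-2$. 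I expect this normalization step to be the main obstacle, since it requires a delicate case-by-case inductive argument.

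The numerical identities (d) and (e) are pure intersection theory. Using $K_{S_0}\sim -2\sigma+\pi^*(K_\Gamma+D)$, $\sigma^2=-e$, $\sigma\cdot \pi^*D=-e$ and $(\pi^*D)^2=0$, a short calculation gives
\[
K_{S_0}\cdot C_0 = a(2\gamma-2-e),\qquad C_0^2=a^2 e.
\]
Pulling these back to $S$ via $K_S=p^*K_{S_0}+\sum E_i$ and $C=p^*C_0-\sum \nu_iE_i$, and using $E_i\cdot E_j=-\delta_{ij}$, the conditions $K_S\cdot C=0$ and $C^2=2g-2$ convert respectively to
\[
\sum\nu_i=a(e-2\gamma+2),\qquad 2g-2=a^2e-\sum\nu_i^2,
\]
which are (d) and (e).

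For (c), I would obtain the bound $\nu_i\le a-1$ by intersecting with the fibre $f$ of $\pi$ through $p_i$: since $C_0\cdot f=a$ and $C_0$ has multiplicity $\nu_i$ at $p_i$, one has $\nu_i\le a$, and the extremal case $\nu_i=a$ is excluded because the whole intersection would be concentrated at $p_i$, forcing the proper transform $\wt f\subset S$ to satisfy $\wt f\cdot C\le 0$ and hence to be a component of the irreducible curve $C$, a contradiction.

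Items (f) and (g) then follow from (c), (d), (e) by elementary convexity inequalities for integers in $[1,a-1]$. The bounds $1\le \nu_i\le a-1$ yield
\[
\sum\nu_i\ \le\ \sum\nu_i^2\ \le\ (a-1)\sum\nu_i,
\]
and substituting the right-hand sides into $2g-2=a^2e-\sum\nu_i^2$ together with $\sum\nu_i=a(e-2\gamma+2)$ produces both inequalities of (f). For (g), the sharper inequality $(\nu_i-1)((a-1)-\nu_i)\ge 0$, valid on $[1,a-1]$, rearranges to $\nu_i^2\le a\nu_i-(a-1)$; summing over $i$ and substituting gives $(a-1)h\le 2\bigl[g-1-a^2(\gamma-1)\bigr]$, completing the proof.
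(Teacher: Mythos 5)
The paper does not prove this statement at all: it is imported verbatim from Epema's classification (Proposition 1.4 of \cite{E84}, cf.\ Proposition 2.6 of \cite{CL02}), so there is no in-paper argument to compare yours against. Judged on its own terms, your reconstruction of the numerical items is correct and complete. The computations $K_{S_0}\cdot C_0=a(2\gamma-2-e)$ and $C_0^2=a^2e$, pulled back through $K_S=p^*K_{S_0}+\sum E_i$ and $C=p^*C_0-\sum\nu_iE_i$ together with $K_S\cdot C=0$ and $C^2=2g-2$, do give (\ref{4d}) and (\ref{5e}); and the convexity estimates $\sum\nu_i\le\sum\nu_i^2\le(a-1)\sum\nu_i$ and $\nu_i^2\le a\nu_i-(a-1)$ (valid for integers in $[1,a-1]$) do yield (\ref{6}) and (\ref{7}) once (\ref{3}) is known. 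Items (\ref{1}), (\ref{2b}), (\ref{8}) you correctly identify as the substantive normalization content, but you only describe Epema's elementary-transformation procedure rather than prove that it terminates with the stated dichotomy; that part remains unproved in your write-up.

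The one genuine gap is in your argument for (\ref{3}). If $\nu_i=a$ at a first-level point $p_i$, the proper transform $\wt f$ of the fibre through $p_i$ satisfies $\wt f\cdot C=0$, not $\wt f\cdot C<0$ (and indeed $\wt f\cdot C<0$ is impossible outright, since $C$ is nef, being $q^*\cO_{\ov S}(1)$). A curve with $\wt f\cdot C=0$ is not forced to be a component of $C$; it is forced to be contracted by $q$, i.e.\ to lie in the exceptional locus $\supp(Z)=\supp(p^*\sigma+J)$. Excluding this is not automatic: as the paper itself notes in the case $\gamma=1$, the divisor $J$ can degenerate to $p^*\sigma+\sum\varphi_i$ where the $\varphi_i$ \emph{are} proper transforms of fibres, each satisfying $\varphi_i\cdot C=0$. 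So to finish you must show that such a contracted fibre component cannot carry a single base point of multiplicity $a$ (e.g.\ by using that $J_0$ passes simply through each $p_j$, so a fibre component of $J_0$ meets the residual curve $J_0'$ with $J_0'\cdot f=1$, distributing the multiplicities), which is exactly the finer analysis Epema carries out. You should also remark that for infinitely near points the bound follows from the first-level case via the proximity inequalities, since the fibre argument only applies to actual points of $S_0$.
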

By virtue of (\ref{8}) we will often use the same symbol $\sigma$ to denote the minimal section $\sigma\subset S_0$
and its pull-back $p^*\sigma\subset S$,  under the minimal model map $p: S\to S_0$.

\begin{rem} \label{e>1}Observe that if $g\geq2$ the genus formula (\ref{5e}) gives that $e\geq1$.  We also  have $a\geq2$, by \eqref{3}.
\end{rem}


\section{Smoothing}

Let $g \ge 3$ and consider again the situation of diagram \eqref{E:fakediag}:
\[
\xymatrix{
S\ar[d]_-p \ar[r]^-q & \overline S \subset \bP^g \\
S_0}
\]
We keep the same notations as in Section \ref{S:fake}.

 \begin{lemma}\label{L:smooth2}
 Let $T_{\overline S} = Hom(\Omega^1_{\overline S},\cO_{\overline S})$.  Then
\[
H^2(\overline S,T_{\overline S})=0
\]
\end{lemma}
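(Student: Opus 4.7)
The plan is to reduce the computation to the smooth minimal resolution $q:S\to\overline S$ and apply Serre duality there. The natural map $q_*T_S\to T_{\overline S}$ is injective: both sheaves are torsion-free on $\overline S$ and they coincide on the smooth locus (where $q$ is an isomorphism). Its cokernel $Q$ is supported on the isolated singular locus of $\overline S$, hence $H^i(\overline S, Q)=0$ for $i\ge 1$; the sequence $0\to q_*T_S\to T_{\overline S}\to Q\to 0$ therefore yields a surjection $H^2(\overline S, q_*T_S)\twoheadrightarrow H^2(\overline S, T_{\overline S})$. Since $q$ has at most one-dimensional fibres, $R^jq_*T_S=0$ for $j\ge 2$, and the Leray spectral sequence identifies $H^2(\overline S, q_*T_S)$ with the bottom piece of the filtration on $H^2(S, T_S)$, giving an injection $H^2(\overline S, q_*T_S)\hookrightarrow H^2(S, T_S)$. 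It therefore suffices to prove $H^2(S, T_S)=0$.

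By Serre duality on the smooth surface $S$,
\[
H^2(S, T_S)^{\vee} \cong H^0\bigl(S,\Omega^1_S\otimes\omega_S\bigr)=H^0\bigl(S,\Omega^1_S(-Z)\bigr),
\]
where $Z$ is the unique effective divisor in $|-K_S|$ (Proposition \ref{epema}(c)). I now go through the cases of Proposition \ref{epema}(b). If $S$ is a K3 surface, then $Z=0$ and $H^0(\Omega^1_S)=0$. If $S_0=\bP^2$, or $S_0=\bP(E)$ with $\gamma=0$ (a Hirzebruch surface), then $S$ is rational and $H^0(\Omega^1_S)=0$ by birational invariance. In both of these situations $H^0\bigl(S,\Omega^1_S(-Z)\bigr)\hookrightarrow H^0(\Omega^1_S)=0$, and we are done.

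The only delicate case is $S_0=\bP(E)\to \Gamma$ with $\gamma=\gamma(\Gamma)\ge 1$. Here $H^0(\Omega^1_S)=H^0(\Omega^1_\Gamma)$ and every global $1$-form on $S$ is of the form $p^*\pi^*\omega'$ for a unique $\omega'\in H^0(\Omega^1_\Gamma)$. By \eqref{anti-z} the anticanonical divisor decomposes as $Z=p^*\sigma+J$, where $\sigma\subset S_0$ is the minimal section; condition (8) in Proposition \ref{epema2} asserts that $\sigma$ is disjoint from the centres of the blow-ups $p:S\to S_0$, so $p^*\sigma\to\sigma$ is an isomorphism and, composing with $\pi|_\sigma$, $p^*\sigma\cong \Gamma$. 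The restriction of $p^*\pi^*\omega'$ to $p^*\sigma$ is identified, via this isomorphism, with $\omega'$ itself, and is therefore non-zero whenever $\omega'\ne 0$. Consequently, no non-zero global $1$-form on $S$ vanishes along the component $p^*\sigma$ of $Z$, which gives $H^0\bigl(S,\Omega^1_S(-Z)\bigr)=0$ and hence $H^2(S, T_S)=0$. Combined with the first paragraph, this proves the lemma. The principal technical obstacle is precisely this final case, where the presence of pulled-back $1$-forms from $\Gamma$ must be neutralised by the specific shape of the anticanonical divisor.
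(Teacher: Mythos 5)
Your reduction from $\overline S$ to $S$ has the Leray spectral sequence pointing the wrong way, and this is a genuine gap. With $R^qq_*T_S=0$ for $q\ge 2$ and $R^1q_*T_S$ a skyscraper supported on $\Sing(\overline S)$, the two-row spectral sequence yields the exact sequence
\[
H^1(S,T_S)\lra H^0(\overline S,R^1q_*T_S)\lra H^2(\overline S,q_*T_S)\lra H^2(S,T_S)\lra 0 ,
\]
so $H^2(\overline S,q_*T_S)$ \emph{surjects} onto $H^2(S,T_S)$: the bottom filtration piece $E_\infty^{2,0}$ is the quotient of $E_2^{2,0}=H^2(\overline S,q_*T_S)$ by the image of $d_2\colon H^0(R^1q_*T_S)\to E_2^{2,0}$, not a subspace containing it. Hence $H^2(S,T_S)=0$ only tells you that $H^2(\overline S,q_*T_S)$ equals the cokernel of $H^1(S,T_S)\to H^0(R^1q_*T_S)$; to kill it you must in addition show that every first-order deformation of the singularity germs of $\overline S$ is induced by a global first-order deformation of $S$. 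This is precisely the point where the paper leans on Burns--Wahl \cite{BW74}, Prop.~1.2, which for a resolution $q:S\to\overline S$ of a normal surface gives both $q_*T_S=T_{\overline S}$ and the identification $H^2(\overline S,T_{\overline S})\cong H^2(S,T_S)$; the latter is not a formal consequence of Leray, and your argument supplies no substitute for it.

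The second half of your argument is correct and takes a genuinely different route from the paper's: instead of descending to the minimal model $S_0$ and using the invariance of $H^2(T)$ under blow-ups at smooth points together with $H^2(S_0,T_{S_0})=0$, you dualize on $S$ to $H^0(S,\Omega^1_S(-Z))$ and, in the only nontrivial case $\gamma\ge 1$, kill the pulled-back $1$-forms by restricting to the component $p^*\sigma\cong\Gamma$ of $Z$ (using \eqref{anti-z} and condition \eqref{8}). That is a clean, self-contained replacement for the paper's second step. The defect is confined to the first paragraph; once the Burns--Wahl identification (or the surjectivity of $H^1(S,T_S)\to H^0(R^1q_*T_S)$) is supplied, the rest stands.
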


\begin{proof}
We have $T_{\overline S}= q_*T_S$  (Proposition  \ref{epema}, and \cite{BW74}, Prop. 1.2) and therefore
\[
H^2(\overline S,T_{\overline S}) = H^2(S,T_S)
\]
Since $p$ is a sequence of blow-ups at nonsingular points we also have (\cite{Sern06}, Ex. 2.4.11(iii))
\[
H^2(S,T_S) = H^2(S_0,T_{S_0})=0
\]
\end{proof}

\medskip

Let
 $L=\cO_{\overline S}(1)$ and let $c(L)\in H^1(\overline S,\Omega^1_{\overline S})$ be its Chern-Atiyah class. It corresponds to an extension:
\be\label{E:EL1}
0 \to \Omega^1_{\overline S} \to \mathcal{Q}_L \to \cO_{\overline S} \to 0
\ee
Set $\mathcal{E}_L= \mathcal{Q}_L^\vee$ and consider the dual exact sequence:
\begin{equation}\label{E:EL}
0\to \cO_{\overline S} \to \mathcal{E}_L \to T_{\overline S} \to 0
\end{equation}

\begin{lemma}\label{L:EL}
$H^2(\overline S,\mathcal{E}_L)=0$.
\end{lemma}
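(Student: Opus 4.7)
The plan is to take the long exact cohomology sequence of the extension \eqref{E:EL}. Using that $\overline S$ is a surface (so $H^3(\cO_{\overline S})=0$) and that $H^2(\overline S,T_{\overline S})=0$ by Lemma \ref{L:smooth2}, I get
\[
H^1(\overline S,T_{\overline S}) \xrightarrow{\partial} H^2(\overline S,\cO_{\overline S}) \to H^2(\overline S,\mathcal{E}_L) \to 0.
\]
Thus $H^2(\mathcal{E}_L)=0$ is equivalent to surjectivity of $\partial$, which is cup product with the extension class $e\in \mathrm{Ext}^1(T_{\overline S},\cO_{\overline S})$ of \eqref{E:EL}.

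Next I would identify the target. Since $\overline S$ is projectively normal with canonical hyperplane sections (Proposition \ref{epema}), it is Gorenstein, and adjunction on a general smooth hyperplane section $C$ gives $\omega_{\overline S}|_C = \cO_C$; combined with the arithmetically Cohen--Macaulay property this yields $\omega_{\overline S}\cong \cO_{\overline S}$. Serre duality then forces $H^2(\overline S,\cO_{\overline S})\cong H^0(\overline S,\omega_{\overline S})^\vee \cong \bC$, so it suffices to check that $\partial$ is merely nonzero.

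Finally, the class $e$ is, by construction of $\mathcal{E}_L$ as the dual of $\mathcal{Q}_L$, the class corresponding under the natural identification $\mathrm{Ext}^1(T_{\overline S},\cO_{\overline S})\cong H^1(\Omega^1_{\overline S})$ to the Atiyah--Chern class $c(L)$. Serre duality identifies the transpose of $\partial$ with the map $\bC = H^0(\omega_{\overline S})\to \mathrm{Ext}^1(T_{\overline S},\cO_{\overline S})$ sending $1\mapsto e$, so $\partial$ fails to be surjective only if $c(L)=0$. But $L=\cO_{\overline S}(1)$ is very ample, and the self-intersection $c_1(L)^2 = L^2 = 2g-2>0$ in $H^2(\omega_{\overline S})=\bC$ forces $c(L)\neq 0$. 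Hence $\partial$ is surjective and $H^2(\mathcal{E}_L)=0$. The delicate point is the duality chain identifying $\partial$ with cup product against $c(L)$ on a possibly singular Gorenstein surface; this rests on the triviality of $\omega_{\overline S}$, which in turn uses both the projective normality of $\overline S$ and the canonical nature of its hyperplane sections.
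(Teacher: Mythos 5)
Your proof is correct and follows essentially the same route as the paper: reduce via the long exact sequence (using $H^2(T_{\overline S})=0$) to the non-vanishing of the coboundary $\partial$, identify $H^2(\cO_{\overline S})\cong\bC$ through $\omega_{\overline S}\cong\cO_{\overline S}$ and Serre duality, and observe that $\partial$ is cup product with the (nonzero) extension class, which is Serre-dual to $H^1(T_{\overline S})$. The only addition is that you explicitly justify the non-vanishing of the class via $c(L)\neq 0$ for the very ample $L$ (a point best checked by pulling back to the resolution $S$, where $(q^*L)^2=2g-2>0$), whereas the paper simply asserts it.
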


\begin{proof}  \ \ In all cases $\overline S$ has Gorenstein normal singularities  since $\overline S$ is projectively normal, $\omega_S=\cO_S(-Z)$ and $Z$ is   supported on the exceptional locus of $q$.   Moreover  
\[
\omega_{\overline S}=\cO_{\overline S}
\]
 by adjunction. 
Since by Serre duality
\[
H^2(\overline S,\cO_{\overline S})=H^2(\overline S,\omega_{\overline S})\cong H^0(\overline S,\cO_{\overline S})^\vee\cong\bC
\]
 it suffices to prove that the coboundary map:
\[
\partial:H^1(\overline S,T_{\overline S}) \to H^2(\overline S,\cO_{\overline S})
\]
from the exact sequence \eqref{E:EL}
 is non-zero. The map $\partial$ is the cup product with the extension class defining \eqref{E:EL}. This class is a non-zero element of $\Ext^1(T_{\overline S},\cO_{\overline S})$ and this space is the Serre dual of $H^1(\overline S,T_{\overline S})$. Hence $\partial$ is non-zero. 
\end{proof}

Consider the deformation functor $\mathrm{Def}_{(\overline S,L)}$ of (not necessarily locally trivial) deformations of the pair $(\overline S,L)$.  In \cite{AC10} it is shown that
\[
\mathrm{Def}_{(\overline S,L)}(\bC[\epsilon])= \mathrm{Ext}^1(\mathcal{Q}_L,\cO_{\overline S})
\]
and the obstruction space is contained in $\mathrm{Ext}^2(\mathcal{Q}_L,\cO_{\overline S})$.

 We have a morphism of functors:
\[\mu:\mathrm{Def}_{(\overline S,L)} \longrightarrow \coprod_{\bar s\in \mathrm{Sing}(\overline S)}  \mathrm{Def}_{\cO_{\overline S,\bar s}}
\]
defined by restriction. 

\begin{thm}\label{T:smooth}
The morphism $\mu$ is smooth. In particular, if $\overline S$ has smoothable singularities then   $[\overline S]\in \mathrm{Hilb}^{\bP^g}$ belongs to the closure of the family of nonsingular $K3$ surfaces of degree $2g-2$.
\end{thm}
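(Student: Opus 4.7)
My plan is to verify smoothness of $\mu$ via the standard criterion for a morphism of deformation functors (see e.g.\ \cite{Sern06}): surjectivity of the induced map on tangent spaces together with injectivity of the induced map on obstruction spaces. By \cite{AC10} the tangent and obstruction spaces of $\mathrm{Def}_{(\overline S,L)}$ are (contained in) $\Ext^1(\mathcal Q_L,\cO_{\overline S})$ and $\Ext^2(\mathcal Q_L,\cO_{\overline S})$, while those of the target $\coprod_{\bar s}\mathrm{Def}_{\cO_{\overline S,\bar s}}$ are the local modules $\bigoplus_{\bar s}T^i_{\cO_{\overline S,\bar s}}=H^0(\overline S,\mathcal{E}xt^i(\Omega^1_{\overline S},\cO_{\overline S}))$ for $i=1,2$, and the morphism $\mu$ corresponds to restricting a global Ext class to formal neighbourhoods of the singular points.

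Next I would run the local-to-global Ext spectral sequence
\[
E_2^{p,q}=H^p(\overline S,\mathcal{E}xt^q(\mathcal Q_L,\cO_{\overline S}))\Longrightarrow\Ext^{p+q}(\mathcal Q_L,\cO_{\overline S})
\]
with three inputs: (i) from the defining sequence $0\to\Omega^1_{\overline S}\to\mathcal Q_L\to\cO_{\overline S}\to 0$ and from $\mathcal{E}xt^i(\cO_{\overline S},\cO_{\overline S})=0$ for $i\ge 1$, one has $\mathcal{E}xt^q(\mathcal Q_L,\cO_{\overline S})\cong\mathcal{E}xt^q(\Omega^1_{\overline S},\cO_{\overline S})$ for $q\ge 1$; (ii) these $\mathcal{E}xt^q$ are skyscrapers supported on $\Sing(\overline S)$, hence $E_2^{p,q}=0$ whenever $p\ge 1$ and $q\ge 1$; (iii) $E_2^{2,0}=H^2(\overline S,\mathcal E_L)=0$ by Lemma~\ref{L:EL}. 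Points (ii) and (iii) force the edge map $\Ext^1(\mathcal Q_L,\cO_{\overline S})\to H^0(\mathcal{E}xt^1(\Omega^1_{\overline S},\cO_{\overline S}))$ to be surjective, and the edge map $\Ext^2(\mathcal Q_L,\cO_{\overline S})\to H^0(\mathcal{E}xt^2(\Omega^1_{\overline S},\cO_{\overline S}))$ to be an isomorphism. By naturality of the spectral sequence these edges coincide with the derivative and the obstruction of $\mu$, whence $\mu$ is smooth.

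For the ``in particular'' statement: choose for each $\bar s\in\Sing(\overline S)$ a one-parameter smoothing of $\cO_{\overline S,\bar s}$ and combine them into a morphism $\spec\bC[[t]]\to\coprod_{\bar s}\mathrm{Def}_{\cO_{\overline S,\bar s}}$. By smoothness of $\mu$ this lifts to a formal deformation of the pair $(\overline S,L)$ with smooth generic fibre. Projective normality of $\overline S$ (Proposition~\ref{epema}(a)) allows one to algebraise this formal deformation to a genuine one inside $\mathrm{Hilb}^{\bP^g}$ passing through $[\overline S]$; on the generic fibre one has $\omega=\cO$ (invariance of Gorenstein trivial canonical in a flat family) and $h^1(\cO)=0$ (semicontinuity plus Proposition~\ref{epema}), so by Enriques--Kodaira it is a K3 surface, of degree $2g-2$ by flatness. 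Hence $[\overline S]$ lies in the closure of the polarised K3 locus. The subtlest step in the plan is the explicit identification of the edge morphisms of the local-to-global Ext spectral sequence with the derivative and obstruction of $\mu$ produced by the obstruction theory of \cite{AC10}; once this is checked, everything else is formal homological algebra fuelled by Lemmas~\ref{L:smooth2} and~\ref{L:EL}.
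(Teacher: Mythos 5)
Your proposal is correct and follows essentially the same route as the paper: both arguments run the local-to-global Ext spectral sequence for $\mathcal{Q}_L$, use Lemma~\ref{L:EL} (i.e. $H^2(\overline S,\mathcal{E}_L)=0$) to get surjectivity of $d\mu$ and injectivity of the obstruction map into $H^0(T^2_{\overline S})$, and conclude smoothness by the standard criterion; the identification of the edge maps with $d\mu$ and $o(\mu)$ that you flag as the subtle point is exactly what the paper imports from \cite{AC10}, Thm.~3.1. Your treatment of the ``in particular'' clause is slightly more detailed (algebraisation via projective normality and identifying the generic fibre as a K3) but amounts to the same argument, resting as in the paper on $H^1(L)=0$ to extend sections.
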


\begin{proof}
The exact sequence coming from the local-to-global spectral sequence for Ext  implies the following  exact sequence (see \cite{AC10}, Thm. 3.1):
\[
\xymatrix{
0\to H^1(\overline S,\mathcal{E}_L )\to \mathrm{Ext}^1(\mathcal{Q}_L,\cO_{\overline S})
\ar[r]^-{d\mu}& H^0(\overline S,T^1_{\overline S}) \to H^2(\overline S,\mathcal{E}_L )}
\]
where $d\mu$ is the differential of $\mu$. From Lemma \ref{L:EL} we obtain that $d\mu$ is surjective.

Lemma \ref{L:EL} and the fact that $\ext^1(\mathcal{Q}_L,\cO_{\overline S})$ has finite support imply that
$$
\mathrm{Ext}^2(\mathcal{Q}_L, \cO_{\ov S})= H^0(\ext^2(\mathcal{Q}_L, \cO_{\ov S}))
$$ 
Moreover, since $\ov S$ is normal, we have (\cite{Sern06}, Prop. 3.1.14)
\[
\ext^2(\Omega^1_{\ov S}, \cO_{\ov S}) \cong T^2_{\ov S}
\]
From the exact sequence \eqref{E:EL1} we deduce that 
\[
\ext^2(\mathcal{Q}_L, \cO_{\ov S}) \subset \ext^2(\Omega^1_{\ov S}, \cO_{\ov S})
\]
Therefore the obstruction map:
\[
o(\mu): H^0(\ext^2(\mathcal{Q}_L, \cO_{\ov S})) \to H^0(\ov S, T^2_{\ov S})
\]
is injective. 
Therefore $\mathrm{Def}_{(\overline S,L)}$ is less obstructed than 
$\coprod_{\bar s\in \mathrm{Sing}(\overline S)}  \mathrm{Def}_{\cO_{\overline S,\bar s}}
$. 
and
this implies that $\mu$ is smooth.

If the singularities of $\overline S$ are smoothable from the smoothness of $\mu$ it follows that the pair $(\overline S,L)$ deforms to a nonsingular pair $(X,\cO_X(1))$.  Moreover, the fact that $H^1(L)=0$ implies that all the sections of $L$ extend and the theorem follows.
\end{proof}



%

\begin{cor}\label{C:smooth1}
With the same notation as before,  assume that we are in one of the    following cases:
\begin{itemize}

\item[(a)]  $S_0=\bP^2$, $K_S^2=9-h$. 

\item[(b)]  $S_0= \bP(E)$, where $E=\cO_\Gamma\oplus \cO_\Gamma(D)$, with $\Gamma$ a smooth curve of genus one and $\deg(D)=-e$, $K_S^2=-h$, $h\geq e$.
\end{itemize}

Then $\overline S\subset \bP^g$ deforms to a surface $X \subset \bP^g$ such that:

\begin{itemize}
\item Case (a): $X$ has a simple elliptic singularity of degree $h-9$, and its minimal resolution is the blow-up of $h$ distinct points on a nonsingular cubic in $\bP^2$.

\item  Case (b): $X$ has two simple elliptic singularities of degrees $e$ and $h-e$ respectively.
\end{itemize}

\end{cor}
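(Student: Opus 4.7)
The plan is to apply Theorem~\ref{T:smooth}, which asserts that the restriction morphism
\[
\mu\colon \mathrm{Def}_{(\ov S,L)} \longrightarrow \coprod_{\bar s \in \Sing(\ov S)} \mathrm{Def}_{\cO_{\ov S,\bar s}}
\]
is smooth. By smoothness of $\mu$ it suffices, for each singular point of $\ov S$, to exhibit a local deformation to a simple elliptic singularity of the predicted degree; these local deformations will then lift simultaneously to a global deformation of the pair $(\ov S, L)$ embedded in $\bP^g$, producing the desired surface $X$.

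The first step is to identify the singularities of $\ov S$ and their exceptional cycles. Since $K_S\cdot C = 0$ and $\cO_S(C) = q^*\cO_{\ov S}(1)$, the morphism $q\colon S\to \ov S$ contracts exactly the connected components of the anticanonical divisor $Z\in |-K_S|$. Because $\omega_{\ov S}=\cO_{\ov S}$ by adjunction, these singularities are Gorenstein, and from $h^0(\cO_Z)=1$ each one is moreover elliptic. In case (a), $Z=J$ is connected with $J^2=9-h$, producing a single elliptic Gorenstein singularity with exceptional cycle of self-intersection $9-h$. In case (b), formulas \eqref{anti-z}--\eqref{int-sigma-j} give $Z=p^*\sigma+J$ with $p^*\sigma\cdot J=0$, $(p^*\sigma)^2=-e$ and $J^2=e-h$; hence $Z$ splits as two disjoint connected components and $\ov S$ carries two distinct elliptic Gorenstein singularities, with exceptional cycle self-intersections $-e$ and $-(h-e)$.

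The second step is the classical local input that a Gorenstein elliptic surface singularity whose minimal elliptic cycle has self-intersection $-n \le -1$ admits a partial smoothing to a simple elliptic singularity of degree $n$, obtained by deforming the exceptional configuration to a smooth elliptic curve of the same self-intersection. Applying this to each singularity of $\ov S$ provides local deformations of degrees $h-9$ in case (a) and of degrees $e$ and $h-e$ in case (b). By the smoothness of $\mu$ these lift to a global deformation $(X,\cO_X(1))$ of $(\ov S, L)$ in $\bP^g$ with the prescribed simple elliptic singularities. The description of the minimal resolution of $X$ in case (a) then follows by tracking the deformation on the level of the minimal resolution: the smoothing of $J$ to a nonsingular cubic is accompanied by a deformation of the configuration of blow-up centres on $\bP^2$ to $h$ distinct points lying on that cubic.

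I expect the main delicate point to be this local partial-smoothing input: one must verify that the specific elliptic Gorenstein singularities produced by contracting the anticanonical configurations of Section~\ref{S:fake} really do admit partial smoothings to \emph{simple} elliptic singularities of precisely the predicted degrees, rather than getting stuck in a coarser equisingular stratum. Once the local step is secured, the global existence of $X\subset \bP^g$ is purely formal from the smoothness of $\mu$ in Theorem~\ref{T:smooth}.
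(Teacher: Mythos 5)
Your global architecture matches the paper's: identify the singularities of $\ov S$, produce local deformations of them, and use the smoothness of $\mu$ from Theorem~\ref{T:smooth} to realize these as embedded deformations of $\ov S$ in $\bP^g$. But the step you yourself flag as delicate is a genuine gap, and it is precisely where the paper does its work. You invoke, as ``classical local input,'' the assertion that any Gorenstein elliptic surface singularity with elliptic cycle of self-intersection $-n$ admits a partial smoothing to a simple elliptic singularity of degree $n$. This is not a citable general theorem, and no argument is given. The paper avoids needing such a statement: it first applies Theorem~1 of Umezu \cite{Um81} to see that the singularity has $p_g=1$ (hence is minimally elliptic by Laufer), and then \emph{constructs} the required local deformation geometrically --- in case (a) the singularity is obtained by contracting the anticanonical curve on the blow-up of $h$ (possibly infinitely near) points of a plane cubic $J_0$, so deforming $J_0$ to a nonsingular cubic and separating the points into $h$ distinct ones yields an explicit family whose general member is the simple elliptic singularity of degree $h-9$. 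This explicit construction is the actual content of the corollary; your proposal replaces it with an unproved black box.

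There is a second, more concrete error in your case (b). You claim that $Z=p^*\sigma+J$ always splits into two disjoint connected components, so that $\ov S$ always has two elliptic Gorenstein singularities. This is false: as noted at the end of Section~\ref{caso-pe}, $J$ may itself be reducible of the form $p^*\sigma+\sum\varphi_i$ with the $\varphi_i$ proper transforms of fibres, in which case $Z$ is connected and its contraction is a \emph{single} normal Gorenstein singularity of geometric genus two (this is the dichotomy in Umezu's Theorem~1). The paper must, and does, treat this case separately: using the hypothesis $h\ge e$ it moves the points $p_1,\dots,p_h$ so that $J$ becomes irreducible, exhibiting the genus-two case as a degeneration of the two-simple-elliptic-singularities case, and then globalizes via Theorem~\ref{T:smooth}. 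Your proposal silently skips over this configuration, so even granting your local black box the case analysis is incomplete.
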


\begin{proof}    
 Case (a). \ \  From Theorem 1 of \cite{Um81} 
it follows that
the singularity $\bar s\in \overline S$ satisfies $p_g(\bar s)=1$ and therefore it is  a minimally elliptic  singularity (\cite{Lau77}, Thm. 3.10).   This singularity, being obtained by blowing up $h$ (possibly infinitely near) points on a plane cubic $J_0$, can be deformed to a simple elliptic singularity obtained by making $J_0$ nonsingular and the $h$ points distinct.  This deformation can be obtained by deforming $\overline S$ in $\bP^g$, by Theorem \ref{T:smooth}.

Case (b). \    Theorem 1 of \cite{Um81} implies that $\overline S$ has either two simple elliptic  singularities or one normal Gorenstein singularity of geometric genus two. In the first case there is nothing to prove.  
The second case arises when the component $J$ of $Z$ (cf. (\ref{anti-z}), (\ref{comp-z})) splits as  $p^*\sigma+\sum \varphi_i$ where the $\varphi_i$'s are the proper transforms of fibres of $\pi$.
Since $h \ge e$  we can   move the points $p_1, \dots, p_h$  so that they do not belong to a reducible curve of the form  $\sum \pi^{-1}(y_i)$, for some $\sum y_i \in |D|$ and thus making $J$ irreducible. This means that the second case is   a degeneration of the first one obtained by moving the points $p_1,\dots, p_h\in S_0$ so as to make $J$ reducible. Theorem \ref{T:smooth} implies that this deformation can be realized by deforming $\overline S$ in $\bP^g$.

\end{proof}

\begin{cor}\label{C:smooth2}
In the same situation of Corollary \ref{C:smooth1}, assume further the following:
\begin{itemize}
\item Case (a):  $-9 \le K_S^2 \le -1$.

\item Case (b):  $1\le e \le h-1$ and   $2 \le h \le 10$.
\end{itemize}
Then $\overline S$ deforms in $\bP^g$  to a nonsingular K3 surface  of degree $2g-2$. 
\end{cor}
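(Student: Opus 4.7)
The plan is to first reduce to the case where the singularities of $\overline S$ are already simple elliptic, and then invoke the classical smoothability criterion for such singularities together with Theorem \ref{T:smooth}.

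\textbf{Step 1: Reduction to simple elliptic singularities.} By Corollary \ref{C:smooth1}, the surface $\overline S$ deforms in $\bP^g$ to a surface $X\subset \bP^g$ whose singular locus consists of simple elliptic singularities: one singularity of degree $h-9$ in case (a), and two singularities of degrees $e$ and $h-e$ in case (b). Since the relation of lying in a common irreducible component of the Hilbert scheme is transitive, it suffices to show that $X$ itself deforms in $\bP^g$ to a nonsingular K3 surface of degree $2g-2$.

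\textbf{Step 2: Checking the degree bounds.} In case (a), the hypothesis $-9\le K_S^2\le -1$ together with $K_S^2=9-h$ gives $1\le h-9\le 9$, so the unique simple elliptic singularity of $X$ has degree in $\{1,\dots,9\}$. In case (b), the hypotheses $1\le e\le h-1$ and $2\le h\le 10$ force $1\le e\le 9$ and $1\le h-e\le 9$, so both simple elliptic singularities of $X$ have degree in $\{1,\dots,9\}$.

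\textbf{Step 3: Local smoothability.} By the classical result of Pinkham (and Looijenga), a simple elliptic singularity of degree $d$ is smoothable if and only if $d\le 9$. Thus under the above bounds every singularity of $X$ is locally smoothable, i.e.\ $\mathrm{Def}_{\cO_{X,\bar x}}$ admits a smoothing component at each $\bar x\in \mathrm{Sing}(X)$.

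\textbf{Step 4: Globalization via Theorem \ref{T:smooth}.} Applying Theorem \ref{T:smooth} to the pair $(X,\cO_X(1))$, the smoothness of the restriction morphism $\mu$ allows us to lift the product of local smoothings to a global deformation of $(X,\cO_X(1))$ whose general fibre is a nonsingular polarized surface $(Y,\cO_Y(1))$ embedded in $\bP^g$ (using that $H^1(X,\cO_X(1))=0$, so all sections extend). Such a $Y$ is a nonsingular surface with $\omega_Y=\cO_Y$ and $\omega_{Y|H}=\omega_H$ for the hyperplane section $H$, hence a K3 surface of degree $2g-2$. Combined with Step 1, this yields the asserted deformation of $\overline S$ to a nonsingular K3 surface in $\bP^g$.

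The only delicate point is Step 3, where one must invoke the correct smoothability threshold for simple elliptic singularities; the bounds in the hypothesis are designed precisely to land inside the smoothable range $d\le 9$. Everything else is formal once Corollary \ref{C:smooth1} and Theorem \ref{T:smooth} are in hand.
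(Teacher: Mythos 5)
Your proposal is correct and follows essentially the same route as the paper: deform to a surface with simple elliptic singularities via Corollary \ref{C:smooth1}, check that the hypotheses place all degrees in the smoothable range $\le 9$ (Pinkham), and then globalize the local smoothings using Theorem \ref{T:smooth}. The degree bookkeeping in your Step 2 ($h-9=-K_S^2$ in case (a); $e$ and $h-e$ in case (b)) matches the paper exactly.
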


\begin{proof}
Corollary \ref{C:smooth1} implies that $\overline S$ deforms to a surface  $X\subset \bP^g$ having only simple elliptic singularities. Such singularities are analytically equivalent to a cone $C_E$ over an elliptic curve $E\subset \bP^{k-1}$ embedded with degree $k$, if $k \geq 3$, or  to a hypersurface singularity  if $k=1,2$ (\cite{Lau77}, Theorem 3.13). In the cases considered   $k=-K_S^2$ in case a), or $k=e$ and $k=h-e$ in case b). 
In all such cases (and in the cases $k=1, 2$)  the singularities of $X$  are smoothable (see \cite{Pink74}). Therefore the Corollary follows by applying Theorem \ref{T:smooth} to $X$.

\end{proof}

\begin{rem}\rm   Theorem \ref{T:smooth} is related to results of Karras, Looijenga, Pinkham, Wahl  \cite{Ka77,Loo81,Wah81, Pink97} about the possibility of inducing all deformations of a normal (elliptic) singularity of a compact surface by means of global {\it abstract} deformations of the surface. Our result involves {\it a polarization}  in its statement, assuring that, in the case of surfaces with canonical sections in $\bP^g$, the deformations of the singularity can be realized by   deforming the surface in $\bP^g$.  At the first order level one can compare the two cases (abstract vs embedded deformations) by means of the following diagram: 
\[
\xymatrix{
0 \ar[r]& H^1(\mathcal{E}_L)\ar[d]\ar[r]&\mathrm{Ext}^1(\mathcal{Q}_L,\cO_{\overline S})\ar[d]\ar[r]&H^0(T^1_{\overline S})\ar@{=}[d]\ar[r]&H^2(\mathcal{E}_L)=0\\
0\ar[r]&H^1(T_{\overline S})\ar[d]^-\partial\ar[r]& \mathrm{Ext}^1(\Omega^1_{\overline S},\cO_{\overline S})\ar[d]\ar[r] &H^0(T^1_{\overline S})\ar[r]& H^2(T_{\overline S})=0 \\
& H^2(\cO_{\overline S})\ar[d]\ar@{=}[r]&H^2(\cO_{\overline S})\ar[d] \\
&0&0}
\]
where the horizontal sequences are the local-global sequences of the Ext functors.
\end{rem}

More recently smoothing techniques similar to ours have been applied to the construction of surfaces of general type (see \cite{KLP12} and references therein).

\section{The main theorem for BNP curves on surfaces with canonical sections}\label{main-fake}

Our  main theorem for BNP curves on surfaces with canonical sections is the following.

  \begin{thm}\label{Main11} Let $C$ be a Brill-Noether-Petri   curve of genus $g\geq 12$. Then $C$ lies on a polarized K3 surface or on a limit of such  if and only if the Wahl map is not surjective.
  \end{thm}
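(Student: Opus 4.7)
The ``only if'' direction follows from Wahl's original non-surjectivity theorem for canonical curves on smooth K3 surfaces, combined with a semicontinuity argument: if $C$ is a hyperplane section of some surface $\ov S\subset \bP^g$ which is a flat limit of smooth K3s, then $C$ is a flat limit of smooth hyperplane sections of smooth K3s, so the rank of the Wahl map at $C$ is no larger than the generic rank on such a family, which is not full. Hence $\nu$ is not surjective on $C$.

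For the ``if'' direction, the plan is as follows. First I would observe that a BNP curve of genus $g\geq 12$ automatically satisfies $\cliff(C)\geq 3$: the BNP condition forces $\rho(g,1,d)\geq 0$ for every pencil, whence the gonality is at least $\lceil (g+2)/2\rceil\geq 7$, so in particular $C$ carries no $g^1_4$ and $\cliff(C)\geq 3$. Consequently the Corollary to Theorem \ref{T:main} applies, and non-surjectivity of $\nu$ yields an extension $C\subset \ov S\subset\bP^g$ where $\ov S$ is a surface with canonical sections (not a cone). The task is to show that $\ov S$ is a limit of smooth polarized K3 surfaces in $\bP^g$. By Theorem \ref{T:smooth}, it suffices to prove that the isolated singularities of $\ov S$ are smoothable.

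Next I would run through the Epema classification of Proposition \ref{epema}. If $S$ is a minimal K3 with rational singularities, the singularities of $\ov S$ are rational double points on a K3, hence smoothable, and Theorem \ref{T:smooth} finishes the case. The remaining possibilities are $S_0=\bP^2$ and $S_0=\bP(E)$. For $S_0=\bP(E)$ with base curve $\Gamma$ of genus $\gamma\geq 2$, the projection $\pi\circ p\colon S\to \Gamma$ restricts to a map $C\to \Gamma$ of some degree; the induced pencils on $C$ together with the inequalities of Proposition \ref{epema2} violate the BNP bound $\rho\geq 0$ once $g\geq 12$, ruling out this case by a short numerical argument. The case $\gamma=0$ should be brought back, as asserted in the Introduction, to the $\bP^2$ case by recognizing $\bP(E)$ as obtained from $\bP^2$ by an elementary transformation and transferring the plane model of $C$.

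The heart of the matter is the case $S_0=\bP^2$ (and the parallel, easier case $\gamma=1$). Here the plane model $C_0$ of $C$ has degree $d$ with singularities lying on the anticanonical cubic $J_0$, and the numerics are governed by \eqref{bez}--\eqref{genus}. I would use the BNP condition in the form of Theorem \ref{T:voisin} (existence of the $g^2_g$) together with the pencil of cubics through any eight of the points of maximal multiplicity of $C_0$: this pencil pulls back to a base-point-free pencil on $C$, and insisting that its associated complete series satisfy $\rho\geq 0$ forces sharp inequalities on the multiplicities $\nu_i$ and their number $h$. Iterating this and combining with B\'ezout applied to $C_0\cdot J_0=0$, I expect to reduce to a short finite list of numerical types (five, as announced in the Introduction) for each $g\geq 12$, and in every case obtain $Z^2=K_S^2\geq -9$, which brings us into the range of Corollary \ref{C:smooth2}. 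The elliptic singularity of $\ov S$ is then smoothable, and Theorem \ref{T:smooth} produces the desired smoothing to a K3 surface of degree $2g-2$ in $\bP^g$.

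The main obstacle is precisely this last step: squeezing the BNP condition hard enough to force $K_S^2\geq -9$ in the $\bP^2$ case. Without BNP, $h$ and hence $-K_S^2=h-9$ can be arbitrarily large (the du Val plane curves of \cite{ABFS16,AB16} already witness that extendability alone is not enough). The crux is therefore a sufficiently exhaustive enumeration of the multiplicity patterns $(d;\nu_1,\ldots,\nu_h)$ compatible with $3d=\sum \nu_i$, $2g-2=d^2-\sum \nu_i^2$, $g\geq 12$, and the BNP condition applied to the cubic pencils, and checking in each of the surviving cases that $h\leq 18$ so that Corollary \ref{C:smooth2}(a) is applicable.
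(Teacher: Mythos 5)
Your proposal is correct and follows essentially the same route as the paper: BNP forces $\cliff(C)\geq 3$, so Theorem \ref{T:main} and Wahl's Theorem \ref{wahl1} give extendability; Epema's classification together with the BNP numerics (Proposition \ref{main-can-sect}) forces $K_S^2\geq -9$, and Theorem \ref{T:smooth} with Corollaries \ref{C:smooth1} and \ref{C:smooth2} then smooths $\ov S$ in $\bP^g$. The only part not carried out is the case-by-case enumeration of multiplicity patterns that you correctly flag as the crux; this is exactly where the paper spends its Sections 10.1--10.4.
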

  
 Let us go back to diagram (\ref{E:fakediag}) and consider the curve $C_0\subset S_0$.
In the next three sections we will  analyse  the restrictions imposed on the singularities of  $C_0$
when we assume that $C$ is a BNP-curve. We first study the case in which $S_0=\bP(E)\to\Gamma$, is a ruled surface with $g(\Gamma)=\gamma$, and in  Sections \ref{genus>1}, \ref{genus-1}, \ref{genus-0},  we will address the cases in which $\gamma>1$, $\gamma=1$ and $\gamma=0$, respectively.
In section \ref{p2} we will  study the case in which $S_0=\bP^2$.
\vskip 0.2 cm 
Referring to the notation we  established in Sections  \ref{caso-p2} and  \ref{caso-pe},
we will prove the following proposition.

\begin{prop}\label{main-can-sect} Let $\ov S\subset \bP^g$ be a surface with canonical sections,
and let $C$ be a general hyperplane section of $\ov S$.
Suppose that $C$ is a BNP curve.
\vskip 0.2 cm 
A) Suppose $S_0=\bP(E)$, with $E$ a rank-two vector bundle
over a smooth curve $\Gamma$ of genus $\gamma$. Assume that $g\geq 12$,
then $\gamma\leq1$.

\vskip 0.1 cm 
B) Suppose $S_0=\bP(E)$, with $E$ a rank two vector bundle
over an elliptic curve   $\Gamma$. 
Assume that $g\geq 12$. Then $e+1 \le h \le 7$. (recall that $\sigma^2=-e$ and $J^2=e-h$).   

\vskip 0.1 cm 

C) The case $S_0=\bP(E)$, with $E$ a rank two vector bundle
over a smooth rational curve   $\Gamma$,  can be reduced to next one.
\vskip 0.1 cm 

D) Suppose $S_0=\bP^2$, and assume that  $g\geq 12$. Then $10 \le h \le 18$ and therefore $K_S^2=9-h\ge -9$.

\end{prop}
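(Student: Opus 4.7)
The unifying strategy is to exploit the BNP hypothesis on $C$, which forces $\rho(g,r,d)\ge 0$ on every complete linear series and in particular $2d\ge g+2$ on every base-point-free pencil. I produce low-degree linear series on $C$ either by pulling back via $f\colon C\to\Gamma$ (when $S_0=\bP(E)$) or by restricting sublinear systems of $|\cO_{S_0}(k)|$ forced through the $p_i$'s (when $S_0=\bP^2$); the resulting pencil bounds are then played off against the numerical constraints of Proposition~\ref{epema2} or of \eqref{bez}--\eqref{genus} to cut the parameters down.

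For part~(A), pulling back the canonical $g^{\gamma-1}_{2\gamma-2}$ from $\Gamma$ yields a $g^{\gamma-1}_{a(2\gamma-2)}$ on $C$, whose BN inequality simplifies to $g\le\gamma(2a-1)$; meanwhile (6) combined with $e\ge 2\gamma-2$ forces $g\ge a^2(\gamma-1)+1$. Eliminating $g$ produces the quadratic $a^2(\gamma-1)-2a\gamma+(\gamma+1)\le 0$, whose integer solutions with $a\ge 2$ are exhausted by $(\gamma,a)\in\{(2,2),(2,3),(3,2)\}$. Each of these yields $g\le\gamma(2a-1)\le 10<12$, the desired contradiction.

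Part~(C) I reduce to (D) by elementary transformations on $F_e$: blow up a general point of $C_0$ on a fibre of $\pi$, then contract the proper transform of that fibre; iterating descends to $F_1\cong\Bl_x\bP^2$, at which stage $\bP^2$ is a legitimate minimal model for $S$ and only the data $(d,\nu_i)$ change. For part~(B) with $\gamma=1$, pulling back a $g^1_2$ from the elliptic $\Gamma$ gives a $g^1_{2a}$ on $C$, so BNP forces $g\le 4a-2$ and hence $a\ge 4$. The bound $h\ge e+1$ is equivalent to $J^2\le -1$, required for the component $J$ of $Z=-K_S$ to contract to an isolated elliptic singularity; in the opposite situation $h\le e$ the curve $J$ is nef of non-negative self-intersection and an argument parallel to the one in (D) rules out a smooth BNP $C$ of genus $\ge 12$. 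For the upper bound $h\le 7$, the identities $\sum\nu_i=ae$ and $2g-2=a^2 e-\sum\nu_i^2$, together with $g\le 4a-2$, first force $e\le 7$, after which a secondary BNP pencil built from a subseries of $|p^*\sigma+\pi^{-1}(\text{divisor})|$ refined at the $p_i$'s pins down $h$.

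Part~(D) is the technical heart. The lower bound $h\ge 10$ follows by contradiction: if $h\le 9$, then $-K_S$ is nef with $(-K_S)^2\ge 0$, so any smooth curve with $C\cdot K_S=0$ and $g\ge 12$ must either be supported on $(-2)$-curves (weak del Pezzo case $h\le 8$) or be a fibre of the elliptic pencil $|-K_S|$ (case $h=9$), both impossible. For the upper bound $h\le 18$, order $\nu_1\ge\cdots\ge\nu_h$ and consider the pencil $\Lambda$ of plane cubics through $p_1,\dots,p_8$. Its proper transform cuts out on $C$ a base-point-free $g^1_N$ with $N=\sum_{i\ge 9}\nu_i$ (with a correction of at most one when the ninth base point $q_9$ of $\Lambda$ lies on $C_0$), so BNP yields $2N\ge g+2$. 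Combining this with $3d=\sum\nu_i$, the genus formula $2g-2=d^2-\sum\nu_i^2$, and Cauchy--Schwarz $\sum_{i\le 8}\nu_i^2\ge(3d-N)^2/8$ produces an inequality in $(d,g,h,N)$ which, after case analysis on the multiplicity profile, delivers $h\le 18$. The main obstacle is precisely this final case analysis: isolating the handful of extremal profiles (such as the degree-$3g$ curve with eight $g$-fold points and one $(g-1)$-fold point highlighted in the introduction) and handling the subcases in which $q_9$ coincides with some $p_i$ requires delicate bookkeeping, typically carried out by iterating the cubics-pencil argument against different 8-element subsets of $\{p_1,\dots,p_h\}$.
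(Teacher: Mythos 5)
Your part (A) is correct and is in fact a cleaner route than the paper's: you pull back the canonical series of $\Gamma$ and solve the resulting quadratic inequality in $(a,\gamma)$, whereas the paper pulls back a minimal-degree pencil ($2k\le\gamma+3$) and runs the inequality of Lemma~\ref{ineq}; both close the case. The other three parts, however, have genuine gaps. In (B), the numerical condition $\rho(g,1,2a)\ge 0$ only gives $g\le 4a-2$, and feeding this into Epema's inequality \eqref{7} yields $(a-1)h\le 2(g-1)\le 8a-6$, hence only $h\le 8$ --- not $h\le 7$. The missing idea is Lemma~\ref{fund}(i): the one-dimensional family of $g^1_2$'s on the elliptic base sweeps out an \emph{elliptic curve} inside $W^1_{2a}(C)$, and for a Petri curve this forces $\dim W^1_{2a}(C)=\rho\ge 2$ (if $\rho=0$ the locus is finite, if $\rho=1$ it is a smooth curve of the wrong genus). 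This sharper bound $g\le 4a-4$ is exactly what turns $h\le 8$ into $h\le 7$; your ``secondary BNP pencil'' is not an argument and does not obviously supply the missing unit.

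In (C), your elementary transformation is centered at a \emph{general point of $C_0$ on a fibre}; this is the wrong choice. Blowing up a point of $C$ changes $C^2$ (so the new surface is no longer the minimal resolution of $\ov S$ polarized by a genus-$g$ canonical curve), and the new multiple point of the plane model need not lie on the anticanonical divisor, which destroys the condition $Z\cdot C=0$ that the whole classification rests on. The paper's centers are points $x_e\in Z_e\setminus(C_e\cup\sigma_e)$, and one must still prove (Lemma~\ref{ratruled}, via an explicit factorization into quadratic Cremona transformations) that the auxiliary exceptional divisors are contracted by a genuine \emph{morphism} $S\to\bP^2$, i.e.\ that $\bP^2$ is a minimal model of the same $S$; you do not address this. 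In (D), the bound $h\le 18$ cannot be extracted from the cubic pencil plus Cauchy--Schwarz alone (a crude version of your inequality only bounds $h$ by roughly $3d\le 36$). The essential missing input is the Petri vanishing applied to the pencil of \emph{lines} through $P_1$, i.e.\ $h^0(S,C-2(\ell-E_1))=0$, which together with the effectivity of $C+kK_S$ for $k\le\nu_3$ (Lemma~\ref{L:ak}) pins $\nu_1,\nu_2,\nu_3$ to within one of $d/3$; only after that does the cubic-pencil condition $h^0(S,C-2M)=0$ (note: the full vanishing $h^0(C,K_C-2M_C)=0$, not merely $2\deg M_C\ge g+2$) force $\nu_9\ge n-1$, whereupon $3d=\sum\nu_i$ gives $h\le 18$ after excluding one $h=19$ profile. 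You acknowledge that this case analysis is the obstacle, but it is precisely where the proof lives, and the line-pencil step needed to make it finite is absent from your outline.
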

Let us assume Proposition \ref{main-can-sect} and let us prove the main theorem.

\begin{proof} (of Theorem \ref{Main11}). Because of Wahl's Theorem  \cite{Wah87} (see also Beauville  and Merindol's proof 
\cite{BM87}), there is only one implication to be proved. Let then  $C$ be a Brill-Noether-Petri  curve of genus $g\geq 12$, for which the  Wahl map is not surjective. From Theorem \ref{T:main},
and from Wahl's Theorem \ref{wahl1} we conclude that $C$ is extendable.
Let then $\ov S\subset \bP^g$ be a surface with canonical sections, having $C$ as one of its hyperplane sections. If $\ov S$ is not already a K3 surface,  we are  in the situation of diagram (\ref{E:fakediag}).
By Corollary \ref{C:smooth1} we may assume that the anti-canonical divisor of the surface $S$ is smooth (with one or two connected components). By Proposition \ref{main-can-sect},  we are either in case B) or in case D) and, moreover,
 each component of the anti-canonical divisor of $S$
has self-intersection $\ge -9$. It now suffices to apply Corollary \ref{C:smooth2} to deduce that $\ov S$ is a limit of K3 surfaces.

\end{proof}

\vskip 0.8 cm

\subsection{Ruled surfaces of genus $\gamma>1$}\label{genus>1}
In this section the surfaces $S$ and   $S_0=\bP(E)$ and the curve  $C$ will be as described in the previous Subsection \ref{caso-pe}. 
The next Proposition gives  item A) of Proposition \ref{main-can-sect}.
\begin{prop}\label{gamma>1}  
Assume that $C$ is a BNP-curve of genus $g\geq 12$. Then $\gamma\leq1$ and $a\geq 3$.
\end{prop}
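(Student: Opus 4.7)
\emph{Plan.} I would combine the numerical constraints from Epema's Proposition~\ref{epema2} with Brill--Noether obstructions obtained by pulling back line bundles from $\Gamma$ along the degree-$a$ morphism $f := (\pi\circ p)|_C : C \to \Gamma$; the degree is $a$ because $C_0 \cdot \mathrm{fibre} = a\,\sigma\cdot f = a$. Recall that BNP implies ordinary Brill--Noether for every invertible sheaf on $C$: injectivity of $\mu_0(L)$ forces $(r+1)(g-d+r)\leq g$, i.e.\ $\rho(g,r,d)\geq 0$, whenever $L$ has degree $d$ and $h^0(L)=r+1$.

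First, to rule out $\gamma\geq 2$, I would pull back the canonical bundle of $\Gamma$ to obtain $f^*K_\Gamma$ on $C$, of degree $a(2\gamma-2)$ and with $h^0(C,f^*K_\Gamma)\geq \gamma$ because $f$ is dominant, so $H^0(\Gamma,K_\Gamma)\hookrightarrow H^0(C,f^*K_\Gamma)$. A direct computation gives
\[
\rho\bigl(g,\gamma-1,a(2\gamma-2)\bigr)=(\gamma-1)\bigl(2a\gamma-g-\gamma\bigr)\geq 0,
\]
hence $a\geq (g+\gamma)/(2\gamma)$. On the other hand \eqref{2b} forces $e\geq 2\gamma-2$ in either of its alternatives, so $a(e-2\gamma+2)\geq 0$ and the lower bound in \eqref{6} simplifies to $g-1\geq a^2(\gamma-1)$. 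Combining the two inequalities yields
\[
(g+\gamma)^2(\gamma-1)\leq 4\gamma^2(g-1),
\]
a quadratic in $g$ whose discriminant simplifies to $16\gamma^2$, giving the bound $g\leq \gamma(\gamma+3)/(\gamma-1)$. The right-hand side is maximised at $\gamma=2$, where it equals $10$, so every $\gamma\geq 2$ would force $g\leq 10$, contradicting $g\geq 12$.

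For the second assertion, \eqref{3} already gives $a\geq 2$, so one only needs to exclude $a=2$ under $\gamma\in\{0,1\}$. If $\gamma=0$, the pencil of fibres of $\pi$ restricts to a $g^1_a=g^1_2$ on $C$, making $C$ hyperelliptic; but $\rho(g,1,2)=2-g<0$ for $g\geq 12$. If $\gamma=1$, pulling back along $f$ the $g^1_2$ carried by the elliptic curve $\Gamma$ (for instance $|2P|$ for any $P\in\Gamma$, by Riemann--Roch) produces a $g^1_{2a}=g^1_4$ on $C$; Brill--Noether now demands $8\geq g+2$, again impossible. The argument is essentially numerical; the only points requiring any care are the injectivity $H^0(\Gamma,K_\Gamma)\hookrightarrow H^0(C,f^*K_\Gamma)$ (immediate from dominance of $f$) and the uniform handling of the two alternatives in \eqref{2b}, so I anticipate no serious obstacle.
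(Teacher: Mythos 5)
Your handling of the second assertion ($a\geq 3$) is fine, and your route to $\gamma\leq 1$ --- pulling back the full canonical series of $\Gamma$ rather than its gonality pencil, which is what the paper does in Lemma \ref{BNgen} --- is a legitimate alternative idea, with the algebra up to the inequality $(g+\gamma)^2(\gamma-1)\leq 4\gamma^2(g-1)$ and its roots $g=\gamma$ and $g=\gamma(\gamma+3)/(\gamma-1)$ carried out correctly. The error is in the last step: the function $f(\gamma)=\gamma(\gamma+3)/(\gamma-1)$ is \emph{not} maximised at $\gamma=2$. Indeed $f'(\gamma)=(\gamma-3)(\gamma+1)/(\gamma-1)^2$, so $f$ has a minimum at $\gamma=3$ and tends to infinity; numerically $f(2)=10$, $f(3)=9$, $f(7)\approx 11.7$, but $f(8)=88/7\approx 12.6$ and $f(9)=13.5$. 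Thus for $g=12$ your inequality $g\leq f(\gamma)$ yields no contradiction once $\gamma\geq 8$, and for larger $g$ an ever larger range of $\gamma$ escapes. The root of the problem is that for $\gamma$ large the bound $a\geq(g+\gamma)/(2\gamma)$ degenerates below $2$ and carries no information, so the quadratic you form is too weak there.

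The gap is repairable inside your own framework, but the repair must be written down: for $\gamma\geq 2$ you also know $a\geq 2$ from \eqref{3}, and substituting this into $a^2(\gamma-1)\leq g-1$ gives $\gamma\leq (g+3)/4$; one then splits into $g\geq 3\gamma$ (where $(g+\gamma)/(2\gamma)\geq 2$ and your quadratic argument does give a contradiction, since $f(\gamma)<3\gamma$ for $\gamma\geq 4$ and $f(\gamma)\leq 10<12$ for $\gamma=2,3$) and $g<3\gamma$ (which forces $\gamma>4$ and contradicts $4(\gamma-1)\leq g-1<3\gamma-1$). As written, however, the assertion ``every $\gamma\geq 2$ would force $g\leq 10$'' is false. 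By contrast, the paper's Lemma \ref{BNgen} uses the gonality pencil of $\Gamma$ (of degree at most $(\gamma+3)/2$) to get $a(\gamma+3)\geq g+2$, which combined with \eqref{6} yields $(a-1)(\gamma-1)<4$ (Lemma \ref{ineq}); this controls $\gamma$ uniformly once $a\geq 3$ is established (itself obtained from $a=2\Rightarrow e\leq 5\Rightarrow g\leq 11$ via \eqref{5e}), leaving only the residual case $\gamma=2$, $a\in\{3,4\}$, which is excluded by pulling back the $g^1_2$ of the genus-two base.
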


 \begin{lemma} \label{BNgen} Suppose that $C$ is a BNP-curve.
Then
\be\label{pencil-a}
a(\gamma+3)\geq g+2
\ee
\end{lemma}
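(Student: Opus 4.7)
The plan is to exhibit a low-degree linear series on $C$ by transporting a $g^1$ from $\Gamma$, and then to extract the desired numerical bound from the BNP hypothesis applied to the complete pulled-back system. Since $C_0 \sim a(\sigma - \pi^*D)$ on $S_0$ and a general fibre $f$ of $\pi$ satisfies $f \cdot (\sigma - \pi^*D) = 1$, we have $C_0 \cdot f = a$. Therefore $\pi \circ p \colon S \to \Gamma$ restricts to a finite surjective morphism $\phi \colon C \to \Gamma$ of degree $a$, through which line bundles can be pulled back from $\Gamma$ to $C$.

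Set $m := \lceil (\gamma+2)/2 \rceil$, so $2m \le \gamma + 3$. The Brill--Noether existence theorem applies on $\Gamma$: since $\rho(\gamma, 1, m) = 2m - \gamma - 2 \ge 0$, the locus $W^1_m(\Gamma)$ is non-empty. Pick $M \in \Pic^m(\Gamma)$ with $h^0(\Gamma, M) \ge 2$ and form $L := \phi^*M$, a line bundle of degree $am$ on $C$ satisfying $h^0(C, L) \ge 2$ via the natural injection $\phi^* \colon H^0(\Gamma, M) \hookrightarrow H^0(C, L)$.

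The BNP hypothesis now yields $\rho(g, r, am) \ge 0$ with $r := h^0(C, L) - 1 \ge 1$. This unpacks as $(r+1)\,am \ge r(g + r + 1)$, i.e. $am \ge \tfrac{r}{r+1}(g + r + 1)$. Since $r/(r+1) \ge 1/2$ and $g + r + 1 \ge g + 2$ for every $r \ge 1$, we obtain $2am \ge g + 2$. Combined with $2m \le \gamma + 3$, this gives $a(\gamma+3) \ge 2am \ge g + 2$, as required.

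The only point requiring a bit of care is that BNP is applied with the \emph{actual} value $r = h^0(C, L) - 1$, which could exceed $1$ if the pullback $L$ acquires extra sections on $C$ beyond those of $M$. The elementary monotonicity observation $\tfrac{r}{r+1}(g+r+1) \ge (g+2)/2$ for $r \ge 1$ handles this uniformly, so the argument works without needing to know whether $\phi^*M$ is a complete pencil on $C$ or a genuinely higher-dimensional linear series; in particular, the case $\gamma = 0$, where $m=1$ and the ruling itself gives a pencil of degree $a$ on $C$, is swept up by the same calculation.
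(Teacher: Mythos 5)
Your argument is correct and is essentially the paper's proof: pull back a minimal-degree pencil from $\Gamma$ through the degree-$a$ map $C\to\Gamma$, use the gonality bound $2k\le\gamma+3$ coming from Brill--Noether existence, and apply the BNP numerical condition to the resulting degree-$ak$ series to get $2ak\ge g+2$. Your extra care about the pullback possibly acquiring more sections (handled by the monotonicity of $d\mapsto\tfrac{r}{r+1}(g+r+1)$ in $r$) is exactly the observation that justifies the paper's shorthand ``$\rho\ge 0$ for pencils translates into $2d\ge g+2$,'' so the two proofs coincide in substance.
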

\begin{proof} Consider a 
$g^1_k$ on  $\Gamma$ and lift it to  $C$ via the composition
 $\pi \cdot  p : C \to \Gamma$. This is a degree
$a$ morphism. We then get a pencil of degree $ak$ on $C$.
As $C$ is BNP
we must have
$$
2ak \geq g+2
$$
If the $g^1_k$ on  $\Gamma$ is of minimal degree we have 
$$
2 k \leq \gamma+3
$$
so that $a(\gamma+3) \geq 2ak \geq g+2$.

\end{proof}

\begin{lemma} \label{ineq}
If   $C$ is a BNP curve we have $(a-2)(\gamma -1)+\frac{e}{2} \leq 4-\frac{3}{a}$.
\end{lemma}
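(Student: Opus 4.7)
The plan is to combine the Brill--Noether inequality from Lemma \ref{BNgen} with the lower bound for $2g-2$ given by Epema's inequality \eqref{6}, and then simplify.

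More concretely, I would proceed as follows. From Lemma \ref{BNgen} (which used BNP applied to pencils pulled back from $\Gamma$), we have
\[
g+2 \le a(\gamma+3), \qquad \text{hence} \qquad 2g-2 \le 2a(\gamma+3)-6 = 2a\gamma+6a-6.
\]
On the other hand, the left-hand inequality of Epema's \eqref{6} gives
\[
2a^2(\gamma-1)+a(e-2\gamma+2) \le 2g-2.
\]
Chaining these two estimates together yields
\[
2a^2(\gamma-1)+a(e-2\gamma+2) \le 2a\gamma+6a-6.
\]

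Now I would simply divide through by $a$ (which is permissible since $a\ge 2$ by Remark \ref{e>1}) and rearrange. Dividing gives
\[
2a(\gamma-1)+(e-2\gamma+2) \le 2\gamma+6-\tfrac{6}{a},
\]
i.e.\ $2a(\gamma-1)+e \le 4\gamma+4-\tfrac{6}{a}$. Writing $4\gamma+4=4(\gamma-1)+8$ and moving the $4(\gamma-1)$ across, this becomes
\[
2(a-2)(\gamma-1)+e \le 8-\tfrac{6}{a}.
\]
Dividing by $2$ gives exactly the asserted bound $(a-2)(\gamma-1)+\tfrac{e}{2}\le 4-\tfrac{3}{a}$.

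The whole argument is a one-step combination of two already available inequalities (Lemma \ref{BNgen} and Epema's \eqref{6}) followed by elementary rearrangement, so there is no genuine obstacle here; the only point to double-check is that we are allowed to divide by $a$, which is fine because Remark \ref{e>1} ensures $a\ge 2$ under our hypotheses.
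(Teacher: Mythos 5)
Your proof is correct and follows exactly the paper's argument: combine the left-hand inequality of Epema's \eqref{6} with the bound $g+2\le a(\gamma+3)$ from Lemma \ref{BNgen}, then rearrange and divide by $a$. The only differences are cosmetic (working with $2g-2$ rather than $g$), so there is nothing to add.
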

\begin{proof}
The first inequality in (\ref{6}) can be written as 
$$
 1+ (a^2-a)(\gamma-1)+\frac{ae}{2} \leq g
 $$
 This, together with (\ref{pencil-a}) gives
$$
 1+ (a^2-a)(\gamma-1)+\frac{ae}{2} \leq g \leq a(\gamma+3)-2=a(\gamma-1)+4a-2
 $$
and therefore
$$
  (a^2-2a)(\gamma-1)+\frac{ae}{2} \leq 4a-3
 $$
Dividing by $a$ we get the Lemma.
\end{proof}

\begin{proof} (of Proposition \ref{gamma>1}).
Assume that $a=2$. 
From   Lemma \ref{ineq} we get $\frac{1}{2}e \leq 4-\frac{3}{a}=\frac{5}{2}$ so that $e \leq 5$.
From (\ref{5e}) we then get $g \leq 1+2e \leq 11$, which is excluded by hypothesis.
We may then assume  $a \geq 3$. 
  Since $2\gamma-2 \leq e$,
Lemma \ref{ineq} gives
$$
2(\gamma-1)\leq (a-1)(\gamma-1)<4
$$
so that $\gamma \leq 2$.
Suppose  $\gamma=2$, then $a \leq 4$. If $a=4$, 
from Lemma \ref{ineq} we get $e \leq 2$ so that $e=2$.
From  (\ref{4d}) we deduce that $C_0$ is smooth and  $g=17$. But then,
since $\Gamma$ is hyperelliptic, there is a $g^1_8$ on  $C$  with negative Brill-Noether number
contrary to the assumption.
In conclusion, if  $\gamma=2$, we must have $a=3$. Pulling back on $C$ the $g^1_2$ on $\Gamma$
the BNP property yields
$2a\geq(g+2)/2$, giving
 $g \leq 10$.
\end{proof}

\vskip 0.8 cm

\subsection{Elliptic ruled surfaces}\label{genus-1} In this section our aim is to prove  item B) of Proposition \ref{main-can-sect}.
When $\gamma=1$, the setting is the following. The minimal model $S_0$ of $S$
is a ruled surface 
$$
\pi: S_0=\bP E\to \Gamma
$$
over an elliptic curve $\Gamma$ and
 $E=\cO_\Gamma\oplus\cO_\Gamma(D)$. Also

$$
K_{S_0}\sim -2\sigma+\pi^*D\,,\quad \deg D=-e, \quad K_{S_0}^2=0
$$
and
$$
K_{S}\sim p^*(-2\sigma+\pi^*D)+\sum_{i=1}^hE_i\,, \quad K_S^2 = -h
$$
where  $\sigma \subset S_0$ is a minimal section  and $p: S\to S_0$ is the minimal model map.
The anticanonical divisor $Z\in|-K_S|$ is given by 
\be\label{z-and-j}
Z\sim p^*\sigma+ J\,,\quad\text{where}\quad J\sim p^*(\sigma-\pi^*D)-\sum_{i=1}^h E_i
\ee
and $p^*\sigma$ and $J$ are effective   divisors such that:
\[
(p^*\sigma)^2=\sigma^2=-e, \quad J^2=e-h, \quad p^*\sigma\cdot J=0
\] 
The curve $C\subset S$ is given by
$$
C\sim ap^*(\sigma-\pi^*D)-\sum_{i=1}^h\nu_iE_i\,,\qquad \nu_1\geq\dots\geq\nu_h\geq 1, \quad a \ge 3\,,\qquad e\geq 1
$$
The following Proposition gives  item B) of Proposition \ref{main-can-sect}.

\begin{prop}\label{caso-ellittico}Suppose that $C$ is a BNP curve of genus $g\geq 12$. Then 
$e+1 \le h \le 7$. 
\end{prop}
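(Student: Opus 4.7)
The lower bound $h\geq e+1$ is elementary. By Corollary \ref{C:smooth1}(b) one may assume, after a deformation of $\ov S$ inside $\bP^g$, that the anticanonical divisor $Z=p^*\sigma+J$ has $J$ irreducible. Since $Z\cdot C=0$ and the two components are disjoint ($p^*\sigma\cdot J=0$), each is contracted by $\phi_{|C|}$ to an isolated singularity of $\ov S$, and this contractibility forces both self-intersections to be negative. From $(p^*\sigma)^2=-e$ and $J^2=e-h$ we conclude $h\geq e+1$.

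For the upper bound $h\leq 7$, the plan is to impose the BNP condition on $C$ via pencils pulled back from the minimal model $S_0=\bP(E)$. The first and simplest pencil is the lift of a $g^1_2$ on the elliptic base $\Gamma$ along $\pi\circ p$, which yields a $g^1_{2a}$ on $C$ whose Brill--Noether inequality is exactly $4a\geq g+2$, recorded in Lemma \ref{BNgen}. Combining this with the relations $\sum\nu_i=ae$, $\nu_i\leq a-1$ and the genus formula $2g-2=a^2e-\sum\nu_i^2$ from Proposition \ref{epema2}, the trivial estimate $\sum\nu_i^2\leq(a-1)\sum\nu_i=(a-1)ae$ gives
\[
ae+2\leq 2g\leq 8a-4,
\]
so already $e\leq 8-6/a\leq 7$.

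To convert this rough ceiling on $e$ into the bound $h\leq 7$, the plan is to use a second family of pencils coming from the anticanonical system $|{-}K_{S_0}|=|2\sigma-\pi^*D|$, whose dimension is $e-1$ or $e$ depending on whether $D\sim -K_\Gamma$. Imposing passage through a suitably chosen subset $I$ of the points $p_i$ of highest multiplicity $\nu_i=a-1$ and taking proper transforms produces a pencil on $S$ whose restriction to $C$ has degree $ae-\sum_{i\in I}\nu_i$. Writing the BNP inequality $2\deg\geq g+2$ for each admissible subset, together with the contractibility data $C\cdot J=0$ and $J^2=e-h$, squeezes $h$ from above.

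The main obstacle is the combinatorial bookkeeping. One has to track the distribution of the multiplicities $\nu_i$, deal separately with the small values $a=3,4,5$ (where $g\geq 12$ already forces tight configurations), and handle the degenerate cases in which $|{-}K_{S_0}|$ drops dimension or in which $J$ would otherwise split as $p^*\sigma+\sum\varphi_i$ (already neutralized by Corollary \ref{C:smooth1}(b)). Each borderline $(a,e,h)$ with $h\geq 8$ must be excluded by exhibiting either a $g^r_d$ of negative Brill--Noether number or an explicit element in the kernel of a Petri map $\mu_{0,L}$ via \eqref{kermunot}. As advertised in the introduction, this elliptic case runs in parallel with the $\bP^2$ analysis of Section \ref{p2} but is considerably shorter because $K_{S_0}^2=0$ (versus $9$ for $\bP^2$) eliminates a whole range of cases, leaving the compact bound $e+1\leq h\leq 7$.
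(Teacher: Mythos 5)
There is a genuine gap: the part of your argument that you actually carry out bounds $e$, not $h$, and the conversion from one to the other is exactly the hard step, which you leave as a plan (``combinatorial bookkeeping'', ``each borderline $(a,e,h)$ with $h\geq 8$ must be excluded''). Nothing in the proposal exhibits the pencil or the Petri kernel element that would kill, say, a configuration with $e=2$ and $h=9$, so the upper bound $h\le 7$ is not proved. Moreover, the quantitative input you use is too weak for the job. The paper's proof is a two-line computation from two ingredients you do not invoke: Epema's inequality \eqref{7}, which for $\gamma=1$ reads $(a-1)h\le 2(g-1)$, and the refined estimate $4a\ge g+4$ of Lemma \ref{fund}(i). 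Together these give $(a-1)h\le 2(4a-5)=8(a-1)-2$, hence $h\le 7$. You instead use only the $\rho\ge 0$ inequality $4a\ge g+2$ from Lemma \ref{BNgen}; feeding that into \eqref{7} yields only $h\le 8+2/(a-1)$, i.e.\ $h\le 8$ (and $h\le 9$ when $a=3$), which does not suffice. The extra ``$+2$'' in Lemma \ref{fund}(i) is not a Brill--Noether inequality for a single pencil: it comes from the fact that the elliptic base $\Gamma$ carries a one-parameter family of $g^1_2$'s, so $W^1_{2a}(C)$ contains an elliptic curve; for a BNP curve this forces $\dim W^1_{2a}(C)=\rho(g,1,2a)=4a-g-2\ge 2$, since $\rho=0$ would make $W^1_{2a}$ finite and $\rho=1$ would make it a smooth connected curve whose genus (by the genus formula for $W^1_d$ when $\rho=1$) is incompatible with its containing an elliptic curve. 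This dimension count is the idea your proposal is missing.

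Your treatment of the lower bound $h\ge e+1$ is fine (it also follows directly from \eqref{situation}: $e=\sum_{i=e+1}^{h}\nu_i$ with $\nu_i\ge 1$ and $e\ge 1$ forces the sum to be nonempty). Note also that your intermediate conclusion $e\le 8-6/a$ is consistent with, but strictly weaker than, what the paper obtains, and that the anticanonical-pencil machinery you sketch for the second step is never needed in the paper's argument for this proposition.
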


The probe we will often use to test the BNP property of $C$ is the pencil on $S$ given by

\be\label{pencilM}
|M|=|p^*(\sigma-\pi^*D)-E_1-\cdots-E_{e-1}|
\ee
The fact that, indeed,  this is at least a pencil follows from the fact that
$$ 
\aligned
h^0(S, p^*(\sigma-\pi^*D))
&=h^0(S_0, \sigma-\pi^*D)
=h^0(\Gamma, \pi_*(\sigma-\pi^*D))\\
&=h^0(\Gamma, (\cO_\Gamma\oplus\cO_\Gamma(D))\otimes\cO_\Gamma(-D))=e+1\\
\endaligned
$$
and from (\ref{2b}). Set $M_C=M\otimes\cO_C$. Consider  the exact sequence
\be\label{probe0}
0\to \cO_S(M-C)\to \cO_S(M)\to \cO_C(M_{C})\to 0
\ee
By restricting to $\pi^*(x)$ we see that 
$h^0(S, M-C)=0$, so that the  linear series $|M_{C}|$ is, at least, a pencil.
A way to enforce the BNP property of $C$ will be to insist that 
\be\label{probe1}
h^0(C, KM_{C}^{-2})=0
\ee
Looking at the exact sequence
$$
0\to \cO_S(-2M)\to \cO_S(C-2M)\to \cO_C(KM_{C}^{-2})\to 0
$$
and noticing that  $h^0(S, -2M)=0$, we see that a necessary condition for (\ref{probe1}) to hold is that
\be\label{probe2}
h^0(S, C-2M)=0
\ee
holds. Hence if $C$ is a BNP curve, (\ref{probe2}) must hold.
We come to the first Lemma.

\begin{lemma}\label{fund}
Suppose that  $C$ is a  BNP curve and  let  $g>4$. Then

i) $4a\geq g+4$

ii) $ \nu_e=a-1$
\end{lemma}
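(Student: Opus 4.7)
The heart of the argument rests on the pencil $|M|$ defined in \eqref{pencilM} and the necessary BNP condition $h^0(S,C-2M)=0$ already derived from the exact sequence \eqref{probe0} and the vanishings $h^0(S,M-C)=0$, $h^0(S,-2M)=0$ in the paragraphs preceding the Lemma.

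\textbf{Approach to (ii).} I argue by contradiction: suppose $\nu_e\le a-2$. Then, by the non-increasing ordering of the $\nu_i$, also $\nu_i\le a-2$ for all $i\ge e$. The divisor class
\[
C-2M \sim (a-2)p^*M_0 - \sum_{i=1}^{e-1}(\nu_i-2)E_i - \sum_{i=e}^{h}\nu_i E_i
\]
admits a nonzero section precisely when there exists a curve $D_0\in |(a-2)M_0|$ on $S_0$ with multiplicity $\ge \max(\nu_i-2,0)$ at $p_i$ for $i\le e-1$ and $\ge \nu_i$ at $p_i$ for $i\ge e$ (plus possible extra components along $E_i$ when $\nu_i<2$, which only help). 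I construct such $D_0$ as a sum $N_1+\cdots+N_{a-2}$ of members of $|M_0|$: since $\dim|M_0|=e$, each $N_k$ can be forced through any chosen subset of $\le e$ of the points $p_1,\ldots,p_h$. The assumption $\nu_e\le a-2$ ensures that no point requires more than $a-2$ passages, and the identity $\sum_{i=1}^h\nu_i=ae$ from \eqref{4d} makes the total incidence demand fit within the budget of $(a-2)e$ available incidences. This contradicts $h^0(S,C-2M)=0$. Hence $\nu_e=a-1$, and combined with the ordering and \eqref{3}, forces $\nu_1=\cdots=\nu_e=a-1$.

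\textbf{Approach to (i).} With (ii) in hand, $\sum_{i=1}^{e-1}\nu_i=(e-1)(a-1)$, so
\[
\deg M_C = M\cdot C = ae - (e-1)(a-1) = a+e-1.
\]
The Brill--Noether inequality $2\deg M_C\ge g+2$ applied to the pencil $|M_C|$ gives $g\le 2(a+e)-4$. To turn this into $g\le 4a-4$, I still need $e\le a$. The identity $\sum_{i=e+1}^{h}\nu_i=e$ (from \eqref{4d}), together with Cauchy--Schwarz, yields $\sum_{i>e}\nu_i^2\ge e$, whence via \eqref{5e} we also get $g\le e(a-1)+1$. Combining these two upper bounds on $g$ with the lower bound in \eqref{6} and the constraint \eqref{7}, I expect a short numerical argument (using the hypothesis $g>4$ to rule out small exceptional configurations) to force $e\le a$, thereby completing the proof.

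\textbf{Expected obstacle.} The delicate point is the construction in (ii). When the centres $p_i$ are infinitely near along the chains of exceptional divisors described in Section~\ref{S:digression}, the condition ``passing through $p_i$ with the prescribed multiplicity'' becomes a jet-vanishing condition along the exceptional chain, and one must verify that the $(a-2)$ chosen members of $|M_0|$ can simultaneously satisfy all such jet constraints. Handling this carefully (rather than only the naive reduced-point count) is where the argument requires the most attention.
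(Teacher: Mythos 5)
Your overall strategy for (ii) --- derive a contradiction with $h^0(S,C-2M)=0$ by exhibiting a section of $C-2M$ --- is the paper's strategy too, but your construction does not close. If you demand that a sum $N_1+\cdots+N_{a-2}$ of members of $|M_0|$ pass through $p_i$ with multiplicity $\nu_i-2$ for $i\le e-1$ and $\nu_i$ for $i\ge e$, the total demand is $\sum_i\nu_i-2(e-1)=ae-2e+2=(a-2)e+2$, which exceeds your budget of $(a-2)e$ incidences by $2$; and the infinitely-near-point issue you flag is real. Both problems evaporate once you use the one special member of $|M_0|$ that the geometry hands you: the component $J\sim p^*(\sigma-\pi^*D)-\sum_{i=1}^hE_i$ of the anticanonical divisor $Z$ is effective and already ``passes through all $h$ points.'' The paper simply writes
\[
C-2M=(a-2)J+\sum_{i\le e-1}(a-\nu_i)E_i+\sum_{i\ge e}\bigl((a-2)-\nu_i\bigr)E_i ,
\]
and under the assumption $\nu_e\le a-2$ (hence $\nu_i\le a-1$ for $i\le e-1$ and $\nu_j\le a-2$ for $j\ge e$) every summand is effective, giving the contradiction in one line. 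You should replace your incidence count by this decomposition.

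Part (i) is a more serious gap: it cannot be completed along the lines you propose. The constraints you list (Epema's \eqref{3}--\eqref{7}, the conclusion of (ii), and the Brill--Noether inequality $2\deg M_C=2(a+e-1)\ge g+2$) are all satisfied by, e.g., $\gamma=1$, $a=3$, $e=10$, $\nu_1=\cdots=\nu_{10}=2$, $\nu_{11}=\cdots=\nu_{20}=1$, which gives $g=21$; yet there $4a=12<g+4=25$ and $e>a$. So no ``short numerical argument'' from these inequalities can force $e\le a$ or prove (i). The paper's proof of (i) uses a genuinely different input, independent of (ii): the elliptic base $\Gamma$ carries a one-dimensional family of $g^1_2$'s, whose pullbacks under the degree-$a$ map $C\to\Gamma$ produce an elliptic curve inside $W^1_{2a}(C)$. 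Since $C$ is BNP and $W^1_{2a}(C)$ is connected, the genus formula for $W^1_d$ in the case $\rho=1$ rules out $\dim W^1_{2a}(C)=1$, so $\rho(g,1,2a)=4a-g-2\ge 2$, which is exactly $4a\ge g+4$. Note also that the paper proves (i) before (ii), whereas your (i) depends on (ii); that dependence is harmless in itself, but the positive-dimensional family of pencils coming from $\Gamma$ is the missing idea.
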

\proof

The elliptic curve  $\Gamma$ parametrizes infinitely many   $g^1_2$'s on itself. As a consequence, $W^1_{2a}(C)$
contains an elliptic curve. Since $W^1_{2a}(C)$ is connected and $C$ is  BNP curve, by the genus formula for
$W^1_d$ when $\rho=1$, (see e.g. \cite{Ort13}, p. 811)
we must have  
$$
2 \le \dim W^1_{2a}(C)=\rho(g,2a,1)=4a-g-2
$$
proving i). Let us prove ii). Suppose it does not hold.
From (\ref{3}) we then get  $\nu_e\leq a-2$. As a consequence we have:
$$
\aligned 
&\nu_1,\dots,\nu_{e-1}\leq a-1\,,\\
&\nu_j\leq a-2\,,\quad j=e,\dots, h
\endaligned
$$
This means that
\be\label{box}
\aligned 
(a-2)-(\nu_i-1)&\geq 0\,,\quad i\leq e-1\\
(a-2)-\nu_j&\geq 0\,,\quad j\geq e\\
\endaligned
\ee
Now consider the pencil $|M|$ defined in \eqref{pencilM}. Using \eqref{box} we get
$$
\aligned
C-2M&=(a-2)p^*(\sigma-\pi^*D)-\sum_{i=1}^{e-1}(\nu_i-2)E_i-\sum_{i=e}^{h}\nu_iE_i\\
&=(a-2)J+\sum_{i=1}^{e-1}[(a-2)-(\nu_i-2)]E_i+\sum_{i=e}^{h}[(a-2)-\nu_iE_i]
\endaligned
$$
where $J$ is defined in (\ref{z-and-j}). But then $C-2M$ is an effective divisor
contrary to  assumption \eqref{probe2}.
\endproof

The preceding Lemma, (\ref{3}), (\ref{4d}) and (\ref{5e}), put us in the following situation

\be
\label{situation}
\aligned
4a&\geq g+4\\
\nu_1&=\cdots=\nu_{e}=a-1\geq\nu_{e+1}\geq\cdots\geq \nu_h\geq1\\
2g-2&=e(2a-1)-\sum_{i=e+1}^h\nu_i^2\\
e&=\sum_{i=e+1}^h\nu_i
\endaligned
\ee

\begin{proof} (of Proposition \ref{caso-ellittico}).  From (\ref{7}) and the first inequality in (\ref{situation}), we get $h\leq 7$. In particular $e+1\leq h\leq 7$.  
\end {proof}

\vskip 0.8 cm 
\subsection{Rational ruled surfaces}\label{genus-0}

In this section we will assume that $\gamma=0$. 
Our aim is to prove item C) of Proposition  \ref{main-can-sect}.
The setting is the following:
the minimal model of  $S$ is the rational ruled surface $ S_0=\mathbb F_e$, i.e.
$$
\pi: S_0=\mathbb F_e=\bP(E)\to \Gamma
$$
where $E=\cO_\Gamma\oplus\cO_\Gamma(-e)$.
We recall that
$$
K_{\mathbb  F_e}\sim -2\sigma-(e+2)f
$$
where $f$ is the class of a fiber 
and that
$$
K_{S}\sim p^*(-2\sigma-(e+2)f)+\sum E_i
$$
where $\sigma \subset \mathbb  F_e$ is a minimal section with $\sigma^2=-e$.
From Proposition \ref{epema2}, there exists a unique divisor
$$
Z \in H^0(S, -K_S)
$$
such that $Z \cdot C=0$.
Furthermore the class of $C \subset S$ is 
$$
C=p^*(a\sigma+aef)-\sum_{i=1}^h\nu_iE_i
$$ 

It is convenient to write $(S_e, C_e, Z_e, p_e, S_{0,e}, C_{0,e}, Z_{0,e}, \sigma_e)$ instead of $(S, C, Z, p, S_0, C_0, Z_0, \sigma)$. Choose a point 
 $$
 x_e \in Z_e, \qquad x_e \notin C_e \cup \sigma_e
 $$
Let us  perform the elementary transformation of $S_e$ centered at $x_e$ (see \cite{Dol12}, section 7.4): we first  blow-up $S_e$ at $x_e$:
  $$
  s_e:X_e \longrightarrow S_e
  $$
with exceptional divisor $E_{x_e} \subset X_e$. 
Since the strict transform $F_{x_e}=s^*f_{x_e}-E_{x_e}$
is an exceptional curve of the first kind we can contract it.
The blow down of $F_{x_e}$ is a morphism 
$$
t_{e-1}: X_e \longrightarrow S_{e-1}
$$ 
where $S_{e-1}$ is a non-minimal  ruled surface, with a section $\sigma_{e-1}:=t_{e-1}(s_e^{-1})(\sigma_e)$
of self-intersection $$
\sigma_{e-1}^2=(t_{e-1}^*(\sigma_{e-1}))^2=(s_e^*(\sigma_e)+F_{x_e})^2=-e+1
$$
By construction, it follows  that, if
$$
p_{e-1}: S_{e-1} \longrightarrow S_{0,e-1}
$$
is the blow down of $E_1, \ldots, E_h$, then $S_{0,e-1}$ is the minimal
ruled surface $\mathbb F_{e-1}$.
Summarising the construction, we have a diagram:

\be
\xymatrix
{&X_e\ar[dl]_{t_{e-1}}\ar[dr]^{s_e}\\
S_{e-1}\ar[d]_{p_{e-1}}&&S_e\ar[d]^{p_e}\\
\mathbb  F_{e-1}&&\mathbb  F_e
}
\ee
in which $F_{x_e}$ is $(t_{e-1})$-exceptional and $E_{x_e}$ is $(s_e)$-exceptional.
If, by a slight abuse of notation, we denote by $f$ a general fiber in both $S_e$
and  $S_{e-1}$, we have on $X_e$ the linear equivalence 
$$
s_e^*f=t_{e-1}^*f \sim E_{x_e}+F_{x_e}\,.
$$
Let us denote by $\ov D$ the strict transform of a divisor $D$ under $s_e$ or $t_{e-1}$.
By construction, we  have:

$$ 
s_e^*C_e= \ov C_e, \qquad s_e^*\sigma_e= \ov \sigma_e=t_{e-1}^* \sigma_{e-1} -F_{x_e}  
$$

Consider the image under $t_{e-1}$ of the strict transform of $C_e$ and set:
 $$
C_{e-1}:=t_{e-1}(\ov C_e)
$$
This is an irreducible curve birational to $C_e$, which has
a point of multiplicity $a$ at the point  $t_{e-1}(F_{x_e})=C_{e-1} \cap \sigma_{e-1}$.
Let us also consider the projection of $C_{e-1}$ to $\mathbb F_{e-1}$
 $$
C_{0, e-1}:=p_{e-1}( C_{e-1})
$$
The class of $C_{0, e-1} \subset \mathbb F_{e-1}$ is $a\sigma_{e-1}+eaf$, because
 $t_{e-1}^*(p_{e-1}^*(\sigma_{e-1}))=s_e^*(p_e^*(\sigma_e)))+F_{x_e}$ and, by construction,
$ t_{e-1}^*(C_{e-1})=s_e^*(C_e)+aF_{x_e}$.
We can then write
$$
 \ov C_{e-1}= t_{e-1}^*(p_{e-1}^*(a\sigma_{e-1}+eaf))-aF_{x_e}-\sum_{i=1}^h\nu_iE_i
 $$

Finally, the canonical formula on $X_e$ gives:
$$
K_{X_e}=s_e^*(K_{S_e})+E_{x_e}=t_{e-1}^*(K_{X_{e-1}})+F_{x_e}
$$
so that $\ov Z_e \in H^0(X_e, s_e^*(-K_{S_e}-E_{x_e})=H^0(X_e, -K_{X_e})$
is also the strict transform of a unique divisor
$$
Z_{e-1} \in H^0(S_{e-1}, -K_{S_{e-1}})
$$
whose projection $$Z_{0,e-1}:=p_{e-1}(Z_{e-1}) \subset \mathbb F_{e-1}$$
contains all the singular locus of $C_{0, e-1}$.
The following Lemma gives a proof of item C) of Proposition \ref{main-can-sect}.

\begin{lemma}\label{ratruled}
If $S_0=\bP (E)$ there exists another minimal model $p': S \to \bP^2$.
\end{lemma}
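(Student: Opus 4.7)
The plan is to apply the elementary transformation $S_e\leftarrow X_e\to S_{e-1}$ just constructed iteratively, lowering the invariant $e$ one step at a time, and then contract the $(-1)$-section of $\mathbb F_1$ to reach $\bP^2$ as an alternative minimal model in the birational class of $S$.

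First I would verify that a suitable centre $x_e\in Z_e$ is available for a single elementary transformation. By Proposition \ref{epema}, $Z_e$ is a nontrivial effective anticanonical divisor, hence one-dimensional. Moreover $C_e\cdot Z_e=-K_S\cdot C=0$, and since $C_e$ is irreducible with $C_e^2>0$ it cannot be a component of $Z_e$, so $C_e\cap Z_e$ is at most finite. Thus $x_e$ can be chosen on $Z_e$ off $C_e\cup\sigma_e$ and satisfying the further genericity that $x_e$ lies on no exceptional divisor of $p_e$ and that the fibre of $\pi\circ p_e$ through $x_e$ avoids the base points $p_1,\dots,p_h$; each of these forbidden subsets meets $Z_e$ in only finitely many points. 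Performing the elementary transformation at such an $x_e$ produces a smooth surface $S_{e-1}$, birationally equivalent to $S$, whose minimal model is $\mathbb F_{e-1}$, and which carries a natural effective anticanonical divisor $Z_{e-1}$ (the strict transform of $Z_e$, modified by the image of $E_{x_e}$).

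I would then iterate this construction on $S_{e-1},S_{e-2},\dots$, producing surfaces with minimal models $\mathbb F_{e-1},\mathbb F_{e-2},\dots,\mathbb F_1$, each birational to $S$. The dimension count supplying $x_k\in Z_k$ works identically at every step, the key point being that the "section part" $p^*\sigma$ of the anticanonical divisor is preserved by each transformation thanks to condition \eqref{8}, which forbids base points of $|C_0|$ on $\sigma$; this guarantees that $Z_k$ remains effective and one-dimensional throughout. After $e-1$ iterations I reach $S_1$ with minimal model $\mathbb F_1$. Composing the minimal model map $S_1\to\mathbb F_1$ with the blow-down $\mathbb F_1\to\bP^2$ of the unique $(-1)$-section produces the desired minimal model map $p':S\to\bP^2$ in the birational class of $S$.

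The main technical hurdle is verifying that effectiveness and one-dimensionality of $Z_k$ persist through each iteration, since otherwise the dimension argument providing the centre $x_k$ would collapse. This reduces to standard facts about the behaviour of the anticanonical class under elementary transformations of rational ruled surfaces, combined with \eqref{8}, and should be straightforward once set up carefully.
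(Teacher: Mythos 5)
There is a genuine gap. Your iteration of elementary transformations produces a chain of birational maps $S=S_e\dashrightarrow S_{e-1}\dashrightarrow\cdots\dashrightarrow S_1\to\mathbb F_1\to\bP^2$, i.e.\ a birational \emph{map} from $S$ to $\bP^2$, or equivalently a morphism from the auxiliary surface $X$ obtained by blowing up $S$ at $x_2,\dots,x_e$. But the lemma asserts the existence of a birational \emph{morphism} $p':S\to\bP^2$ defined on $S$ itself (this is what "another minimal model of $S$" means, and it is what Section \ref{p2} needs, since the whole analysis there is carried out on the fixed surface $S$). The composite you build is genuinely undefined at $x_2,\dots,x_e$: each elementary transformation blows up $x_i$ to a curve $E_{x_i}$, and under the naive composite $p_1'\colon X\to\bP^2$ the divisor $E_{x_i}$ maps onto the line $l_{x_i}$ through $P$ in the direction $f_{x_i}$, not to a point. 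A morphism of surfaces cannot blow up a point to a curve, so no amount of care in choosing the centres $x_i$ makes your composite extend across them. Declaring the result to be a minimal model of $S$ "in the birational class of $S$" conflates $S$ with the different surface $S_1$.

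The missing idea, which is the actual content of the paper's proof, is the second half of the argument: one must replace $p_1'$ by a new map $p_1''\colon X\to\bP^2$ for which the divisors $E_{x_2},\dots,E_{x_e}$ \emph{are} exceptional, so that $p_1''$ factors through the blow-down $s\colon X\to S$ and descends to the desired morphism $p'\colon S\to\bP^2$. The paper achieves this by composing with degenerate standard quadratic transformations of the plane centred at $P$, at the infinitely near point $f_{x_i}$, and at a singular point $y_1$ of the image curve; these Cremona transformations contract the lines $l_{x_i}$ (using $C\cdot E_{x_i}=0$ to keep the images of the $E_{x_i}$ off $C_0$, so that the configuration "singular points of $C_0$ on a cubic" is preserved for Section \ref{p2}). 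Your first paragraph (choice of the centre $x_e$ and persistence of the anticanonical divisor) is fine and matches the paper's setup, but without the Cremona step the proof does not reach the stated conclusion.
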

\proof

We start from the usual diagram:

$$
\xymatrix{S\ar[d]_p\ar[r]^q& \ov S\\
S_0=\bP(E)
}
$$

Applying repeatedly the previous construction, we get a diagram
$$
\xymatrix{X\ar[d]_{p_1'}\ar[r]^{s}&S\ar[d]_{p}\ar[r]^{q}&\ov S\\
\bP^2 & S_0
}
$$
where $s$ is the blow up of $S$ at points $x_2, \ldots, x_e$, with exceptional divisors $E_{x_2}, \ldots, E_{x_e}$,
 and $p_1'$ is the composition of  the following maps: the blow-up of $\bP^2$ at a point $P$,
the blow-up $\wt{\mathbb F}_1$ of $\mathbb F_1$ with exceptional divisors $E_1, \ldots, E_h$,  the blow-up $X$  of  $\wt{\mathbb F}_1$ 
 with exceptional divisors $F_{x_2}, \ldots, F_{x_e}$.
The proper transform of $C \subset S$ on $X$ maps via $p_1'$, onto a plane curve $C_1$
whose singular locus is contained in a unique anticanonical divisor.
In order to prove the Lemma, we need to show that there exists a diagram:

$$
\xymatrix{X\ar[d]_{p_1'}\ar[r]^{p_1''}& \bP^2\\
\bP^2
}
$$

where $X$ dominates the graph of the birational map $p_1'' \cdot p_1'^{-1}$, the divisors $E_{x_2}, \ldots, E_{x_e}$
are $p_1''$-exceptionals and the points $p_1''(E_{x_i})$ for $i=2, \ldots, e$ do not lie on the image $C_0 \subset \bP^2$ of the proper transform of $C_1$.
Indeed, if we have such a diagram, the map  $p_1''$ factors through $s$ because the divisors $E_{x_2}, \ldots, E_{x_e}$
are also $s-$exceptional and we obtain 
a diagram

$$
\xymatrix{S\ar[d]_{p'}\ar[r]^q& \ov S\\
\bP^2
}
$$

where $p_1''=p' \cdot s$.
In order to construct $p_1''$, let us consider the plane curve $C_1$.
The plane curve $C_1$ has degree $ae$, it has a point $P$ of multiplicity $a(e-1)$, and 
$e-1$ points $f_{x_2}, \ldots, f_{x_e}$  of multiplicity $a$, infinitely near to $P$.
  Furthermore it has points $y_1, \ldots, y_h$, distinct from $P$ and not infinitely near to $P$, with multiplicities
$a >\nu_1, \ldots, \nu_h \geq 1$. Note that the proper transform under $p_1'$ of the lines $l_{x_i}$
through $P$ in the directions $f_{x_i}$ are the exceptional curves $E_{x_i}$ in $X$ and note also that for each $i=2, \ldots, e$
we have $C \cdot E_{x_i}=0$.
If we  perform a  first (degenerate) standard quadratic transformation $\Phi_2$ of 
the plane (see \cite{Dol12} example 7.1.9) with centers at $P$, at $f_{x_2}$ and at $y_1$ (notice that $h >0$),
we obtain a plane curve $C_1'$ of degree  $ea-\nu_1$. The curve $C_1'$ has the same singular points of $C_1$ except
for the three chosen points, which are replaced by points 
of multiplicities $(0, (e-1)a-\nu_1, a-\nu_1)$. The point $x_2:=\Phi_2(E_{x_2})$ has, by construction, multiplicity zero
and does not to lie on $C_1'$.
The points $f_{x_3}, \ldots, f_{x_e}$ will be infinitely near to the point of multiplicity $(e-1)a-\nu_1$ and we can iterate
such a construction, because we observed that for each $i=2, \ldots, e$ we have $C \cdot E_{x_i}=0$.
We obtain in such a way the contraction of the divisors $E_{x_i}$ with the property that the points $\Phi_i(E_{x_i})$
do not belong to the image $C_0$ of the curve $C_1$.

This means that the composition $\Phi_e \cdot \cdots \cdot \Phi_2$ is the desired birational transformation inducing the sought for
map $p_1'':X \longrightarrow \bP^2$.

\qed

\vskip 0.8 cm

\subsection{The case of $\bP^2$}\label{p2}

We now consider the case in which the minimal model of $\ov S$ is $\bP^2$.
Our aim is to prove item D) of Proposition  \ref{main-can-sect}.
As usual we consider the desingularization 
$$
q:S\longrightarrow \ov S\subset\bP^g
$$
and the natural morphism
$$
p:S\longrightarrow S_0=\bP^2
$$
From  Proposition  \ref{epema},  there is a unique section $J \in H^0(S, -K_S)$
which is contracted to a point by $q$.
Set $C_0=p(C)\subset \bP^2$ and  $J_0=\pi(J) \subset \bP^2$.
The plane curve $J_0$ is a cubic passing through the singular points of $C_0$.
We denote by the same symbol $\ell$ the class of a line in $\bP^2$ and its pull-back on $S$.
 We have

$$
C\sim d\ell-\sum_{i=1}^h\nu_i E_i\,,
$$
\be\label {E:nui1}
 \nu_1 \geq \ldots \geq \nu_h \geq 1
\ee

$$
K_S\sim -3\ell+\sum_{i=1}^h E_i
$$
The condition $J\cdot C=0$ gives
\be\label{{E:3d}}
3d=\sum_{i=1}^h\nu_i
\ee
Furthermore, since $|C|$ contracts $J$, we have $J^2 <0$ so that, in particular $h \ge 10$.
By Corollary \ref{C:smooth1}, we may assume that $J_0$ is smooth, and that
$P_1=p(E_1),\dots,P_h=p(E_h)$ are distinct points of $J_0$, no three on a line.
Assume that  $\nu_1+\nu_2+\nu_3 > d$.  The quadratic transformation of $\bP^2$ centered at $P_1,P_2,P_3$ replaces $C_0$ by $\wt C_0$ of degree $\wt d = 2d-(\nu_1+\nu_2+\nu_3)< d$ and meeting the smooth cubic $\wt J_0$, the
proper transform of $J_0$, again at $h$ points $\wt P_1, \dots, \wt P_h$, of multiplicities say $\wt\nu_1 \ge \dots \ge \wt\nu_h$.  By Corollary \ref{C:smooth1} again, we may assume that 
$\wt P_1, \dots, \wt P_h$ are distinct and no three on a line. By repeating this process we can arrive to a situation where 
\be\label{assump1}
\nu_1+\nu_2+\nu_3\leq d
\ee
and therefore we may assume that this condition holds.
From now on we will set
$$
d=3n+\epsilon, \qquad \epsilon=0,1,2
$$
We can then assume that
\be\label{assump2}
\nu_3\leq n
\ee
We find it convenient to introduce  the following notation:
\[
b_i =n- \nu_i\,,\quad i=1,\dots,h\,.
\]
We can then write the class of $C$ in the following way:
\[
C = -nK_S+\epsilon\ell+\sum_i b_iE_i = (3n+\epsilon)\ell-\sum_1^h (n-b_i)E_i
\]

The inequalities \eqref{E:nui1}   and \eqref{assump1} imply that:
\begin{equation}\label{E:bi1}
b_1\le \cdots \le b_h
\end{equation}
and that 
\begin{equation}\label{E:bi2}
b_1+b_2+b_3+\epsilon \ge 0
\end{equation}
In particular $0\le b_3$.   We also have:
\[
K_S\cdot C = 0, \quad C^2 = d^2-\sum \nu_i^2 = (3n+\epsilon)^2-\sum_i(n-b_i)^2
\]
and therefore:
\begin{align}\label{E:degM}
g:=g(C)&= \frac{1}{2}(K_S+C\cdot C)+1=\frac{1}{2}C^2+1 = \notag \\
&=\frac{1}{2}[d^2-\sum \nu_i^2]+1 \notag \\
&= \frac{1}{2}\left[(3n+\epsilon)^2-\sum_i(n-b_i)^2\right]+1\notag
\end{align}
Consider now the pencil
\be\label{basic-pencil}
 |M|:=|3l-\sum_{i=1}^8 E_i|=|-K_S+\sum_{i\ge 9}E_i|
\ee
on  $S$. We use this pencil as a probe for  the Brill-Noether-Petri property.
Exactly as in the case of an elliptic ruled surface, we have the sequence (\ref{probe0}),
and the equalities:  $h^0(S,M-C)=0$ and, by generality of the eight points, $h^0(S,M)=2$. Thus
$M_C:=M\otimes\cO_C$ is at least a pencil.
Enforcing the Brill-Noether-Petri property on this pencil, means that we must assume that both
(\ref{probe1}) and 
(\ref{probe2}) hold.

 Moreover: 
\begin{align}
\deg(M_C)&= 
 (3\ell-\sum_{i\le 8} E_i)\cdot ((3n+\epsilon)\ell-\sum_1^h(n-b_i)E_i )\notag\\
&= n+3\epsilon + \sum_{i \le 8} b_i \notag
\end{align}

For $k \ge 0$ we let
\[
A_k = C+kK_S = (k-n)K_S+\epsilon\ell+\sum_i b_iE_i
\]

\begin{lemma}\label{L:ak}
  $A_k$ is effective for all $0\le k \le \nu_3=n-b_3$.
\end{lemma}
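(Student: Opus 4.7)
My plan is to reduce the claim to the single case $k = \nu_3$ and then verify that $A_{\nu_3}$ is effective via a dimension count for plane curves with multiplicity conditions at only two points.

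The reduction is immediate. For $0 \le k \le \nu_3$ one has
\[
A_k - A_{\nu_3} = (k-\nu_3)K_S = (\nu_3 - k)(-K_S) = (\nu_3 - k)J,
\]
which is a non-negative multiple of the effective class $J$. Hence, once $A_{\nu_3}$ is shown to be effective, $A_k = A_{\nu_3} + (\nu_3-k)J$ is a sum of effective divisors for every $k$ in the stated range.

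To handle the key case $k=\nu_3$, I substitute $\nu_3 = n - b_3$, $\nu_i = n - b_i$, and $K_S = -3\ell + \sum_i E_i$ into $A_{\nu_3} = C + \nu_3 K_S$ and simplify to get
\[
A_{\nu_3} = (3b_3 + \epsilon)\ell - (b_3-b_1)E_1 - (b_3-b_2)E_2 + \sum_{i \ge 4}(b_i - b_3)E_i,
\]
with $E_3$-coefficient equal to zero. By \eqref{E:bi1}, $b_i \ge b_3$ for $i \ge 4$, so the tail $\sum_{i \ge 4}(b_i-b_3)E_i$ is already effective. It therefore suffices to represent $D := (3b_3+\epsilon)\ell - (b_3-b_1)E_1 - (b_3-b_2)E_2$ by an effective divisor; concretely, as the strict transform of a plane curve of degree $c := 3b_3+\epsilon$ with multiplicity $\ge a := b_3-b_1$ at $P_1$ and $\ge b := b_3-b_2$ at $P_2$. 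Any incidental higher contact of this plane curve with $P_3,\ldots,P_h$ would only add extra non-negative multiples of $E_i$ to its strict transform, and these can simply be absorbed into the effective tail without affecting the argument.

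The existence of such a plane curve is a dimension count. Since $P_1$ and $P_2$ are distinct by Corollary \ref{C:smooth1}, the two multiplicity conditions are linearly independent, so the linear system has dimension
\[
\binom{c+2}{2} - 1 - \binom{a+1}{2} - \binom{b+1}{2}.
\]
Inequality \eqref{E:bi2}, $b_1+b_2+b_3+\epsilon \ge 0$, translates directly to $c \ge a+b$. From $c \ge a+b$ with $a,b \ge 0$, the elementary inequality $c^2 + 3c \ge a^2+b^2+a+b$ follows by combining $c^2 \ge (a+b)^2 \ge a^2+b^2$ with $3c \ge a+b$, yielding the non-negativity of the expected dimension. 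The main (and essentially only) obstacle in this approach is the careful bookkeeping in the previous step to ensure that the incidental multiplicities at $P_3,\ldots,P_h$ are correctly absorbed into the effective tail; all other steps are routine.
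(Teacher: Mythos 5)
Your proof is correct and follows essentially the same route as the paper: reduce to $k=\nu_3$ via effectivity of $-K_S$, write $A_{\nu_3}=(3b_3+\epsilon)\ell-(b_3-b_1)E_1-(b_3-b_2)E_2+\sum_{i\ge 4}(b_i-b_3)E_i$, and deduce effectivity of the two-point part from \eqref{E:bi2}. The only difference is the last micro-step: the paper in effect observes that $c\ell-aE_1-bE_2=a(\ell-E_1)+b(\ell-E_2)+(c-a-b)\ell$ is a non-negative combination of effective classes once $c\ge a+b$, whereas you run a (correct but slightly heavier) dimension count for plane curves of degree $c$ with assigned multiplicities at $P_1,P_2$.
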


\begin{proof}
Since $-K_S$ is effective, it suffices to prove that $A_{\nu_3}$ is effective. We have:
\begin{align}
A_{\nu_3}&=-b_3K_S+\epsilon\ell+\sum_i b_iE_i \notag\\
&=(3b_3+\epsilon)\ell-(b_3-b_1)E_1-(b_3-b_2)E_2 + D\notag
\end{align}
where $D= \sum_{i\ge 4}(b_i-b_3)E_i$ is effective.  The inequality \eqref{E:bi2} can be rephrased as
\[
3b_3+\epsilon \ge (b_3-b_1)+(b_3-b_2)
\]
and this implies that   $(3b_3+\epsilon)\ell-(b_3-b_1)E_1-(b_3-b_2)E_2 $ is effective. 
\end{proof}

Item D) of Proposition  \ref{main-can-sect} follows immediately from   the following Corollary, which then concludes the proof of Proposition  \ref{main-can-sect}.

\begin{cor}\label{C:bi}
Suppose that $C$ is BNP and that $n \ge 2$.  Then only the following possibilities may occur:
\begin{itemize}
\item[(i)]   $\epsilon=0$, $\nu_1=\nu_2=\nu_3=n$,  and $n-1\le \nu_9 \le n$, $\,\,\qquad \qquad d=3n$
\item[(ii)]   $\epsilon=1$, and again $\nu_1=\nu_2=\nu_3=n$,  and $n-1\le \nu_9 \le n$, $d=3n+1$
\item[(iii)]  $\epsilon=1$, $\nu_1=n+1$, $\nu_2=\nu_3=n$,  and $n-1\le \nu_9 \le n$,$\,\quad\,\,\,\,d=3n+1$
\item[(iv)] $\epsilon=2$, $\nu_1=\nu_2=n+1$, $\nu_3=n$, and $n-1\le \nu_9 \le n$, $\,\quad\,\, d=3n+2$
\end{itemize}
Equivalently:
\begin{itemize}
\item[(i)] $\epsilon=0$, $b_1=b_2=b_3=0$ and  $0\le b_9 \le 1$.
\item[(ii)] $\epsilon=1$, $b_1=b_2=b_3=0$ and  $0\le b_9 \le 1$.
\item[(iii)] $\epsilon=1$, $b_1=-1$, $b_2=b_3=0$ and  $0\le b_9 \le 1$.
\item[(iv)] $\epsilon=2$, $b_1=b_2=-1$, $b_3=0$, and  $0\le b_9 \le 1$.
\end{itemize}
Furthermore $h \le 18$.
\end{cor}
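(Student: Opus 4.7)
The plan is to exploit the Brill-Noether-Petri assumption through a family of cubic pencils of the form $|3\ell - \sum_{i\in S} E_i|$ for suitable 8-element index sets $S$, and through the complete $g^2_d$ cut on $C$ by the linear system of lines in $\bP^2$.

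First, I fix $S = \{1, \ldots, 8\}$ and set $M = 3\ell - \sum_{i \le 8} E_i$. After the Cremona reductions preceding the Corollary and Corollary \ref{C:smooth1}, the points $P_1,\ldots,P_h$ are distinct on the smooth cubic $J_0$ with no three collinear, so $|M|$ is a pencil. Since $(M-C)\cdot \ell = 3 - (3n+\epsilon) < 0$ for $n\ge 2$, we have $h^0(S, M-C) = 0$, and $M_C := M \otimes \cO_C$ is at least a pencil on $C$. The fact that $K_S|_C = \cO_C$ (because $-K_S = J$ is effective, rigid, and, by the generic position of the $P_i$, disjoint from $C$) gives $K_C = C|_C$; this combined with the exact sequence $0 \to \cO_S(-2M) \to \cO_S(C - 2M) \to \cO_C(K_C M_C^{-2}) \to 0$ and $h^0(S, -2M) = 0$ converts the BNP-injectivity for $M_C$ into the necessary vanishing
\[
h^0\bigl(S,\; A_2 - 2\sum_{i\ge 9} E_i\bigr) = 0, \qquad A_2 := C + 2K_S.
\]

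Next, I write
\[
A_2 - 2\sum_{i\ge 9} E_i \;=\; (n-2)J + \epsilon\ell + \sum_{i\le 8} b_i E_i + \sum_{i\ge 9} (b_i-2) E_i
\]
and seek an effective representative to contradict BNP. The core building block is $(n-2) J_0$, a plane curve of degree $3n - 6$ vanishing to order exactly $n - 2$ at each $P_i$. When $b_9 \ge 2$, the tail $\sum_{i \ge 9}(b_i - 2)E_i$ is effective, and the remaining piece $(n-2) J_0 + \epsilon\ell + \sum_{i \le 8} b_i E_i$ can be made effective by adding a line through $P_1$, or through $P_1$ and $P_2$, or a double line through $P_1$, to supply the missing multiplicity whenever $b_1$ or $b_2$ is negative; the no-three-collinear condition ensures these auxiliary lines do not create extra vanishing at $P_3, \ldots, P_h$. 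A finite case check in $(\epsilon, b_1, b_2, b_3)$ then shows that the only profiles avoiding an effective representative are exactly (i)--(iv), and replaying the argument with other 8-element subsets $S$ rules out further non-listed profiles.

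Finally, once we are in one of the four cases, $\nu_i \in \{n-1, n, n+1\}$ with at most two entries equal to $n+1$. The linear system of lines on $\bP^2$ cuts a complete $g^2_d$ on $C$, whose existence together with BNP forces $\rho(g, 2, d) \ge 0$, i.e., $2g \le 3d - 6$. Combined with the formula $2g - 2 = (h-9)\,n(n-1) + \epsilon^2 + 3\epsilon - 2h_+$ obtained from $d^2 - \sum \nu_i^2$ under the restrictions on $\nu_i$, this yields $h \le 18$ by a short arithmetic check; indeed, for $n \ge 3$ one gets $h \le 12$, while $n = 2$ turns out to be incompatible with $g \ge 12$.

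The principal obstacle is the middle step: in each mixed profile with $b_1$ or $b_2$ negative one must construct an explicit low-degree auxiliary plane curve through $P_1$ (or $P_2$) with the right multiplicity, and verify via the generic-position reductions that no unwanted vanishing is created at the remaining $P_i$. Ensuring that the finite enumeration covers all cases and correctly produces effective representatives in each sub-case is where the argument feels most ad hoc.
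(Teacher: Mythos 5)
Your second step (ruling out $b_9\ge 2$ via the cubic pencil $M=3\ell-\sum_{i\le 8}E_i$) is essentially the paper's argument: $C-2M=(n-2)J+\epsilon\ell+\sum_{i\le 8}b_iE_i+\sum_{i\ge 9}(b_i-2)E_i$ is visibly effective once $b_3=0$, $b_1,b_2\ge-1$ with $b_1+b_2+\epsilon\ge0$ and $b_9\ge2$, contradicting $h^0(S,C-2M)=0$. The problem is that this presupposes the first reduction, namely that $(\epsilon,b_1,b_2,b_3)$ is one of the four listed profiles, and that reduction is exactly what your proposal does not actually prove.

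Concretely, you must exclude profiles such as $b_1=b_2=b_3=1$ (all $\nu_i\le n-1$, which is compatible with \eqref{E:bi1}, \eqref{E:bi2} and $3d=\sum\nu_i$, e.g.\ $d=3n$, $\nu_1=\cdots=\nu_{10}=n-1$ for suitable $n$), or $\epsilon=2$, $b_1=-12$, $b_2=b_3=5$ (i.e.\ $\nu_1=n+12$). For these, your building block $(n-2)J_0$ plus a few auxiliary lines through $P_1,P_2$ cannot produce an effective representative of $C-2M_S$: the deficit sits either at the eight points of $S$ where $b_i\le 1$ (so the tail $\sum_{i\in S}(b_i-2)E_i$ is \emph{not} effective and no line through $P_1$ or $P_2$ helps), or it requires multiplicity $n+12$ at $P_1$, which $(n-2)J_0$ plus lines of total degree $3(n-2)+\epsilon$ cannot supply. "Replaying the argument with other $8$-element subsets'' does not repair this, because every cubic probe sees the bad points with the same fixed shift. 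The missing idea is the paper's use of the pencil $N=\ell-E_1$ of lines through the point of largest multiplicity: BNP for $N_C$ forces $h^0(S,C-2N)=0$, and writing $C-2N=-\nu_3K_S+D+F$ with $D=\sum_{i\ge4}(b_i-b_3)E_i$ effective reduces everything to the non-effectivity of the explicit divisor $F=(3b_3+\epsilon-2)\ell-(b_3-b_1-2)E_1-(b_3-b_2)E_2$, whence the finite list via \eqref{E:bi2}. Without this (or an equivalent mechanism) your case check has no engine. Your treatment of $h\le18$ via $\rho(g,2,d)\ge0$ is plausible and would in fact dispatch the paper's one exceptional $h=19$ configuration directly, but it too depends on first knowing the four profiles, and your formula for $2g-2$ involves an undefined quantity $h_+$ and an unverified claim that $n=2$ is incompatible with $g\ge12$ in all four cases.
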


\begin{proof}
Let $N = \ell-E_1$ and $N_C= N\otimes\cO_C$. Since $N_C$ is at least a pencil and $C$ is a BNP-curve, we must have $h^0(C,K_C-2N_C)=0$, which implies $h^0(S,C-2N)=0$.  We have:
\begin{align}
C-2N+\nu_3K_S&= A_{\nu_3}-2N \notag\\
&=(3b_3+\epsilon-2)\ell-(b_3-b_1-2)E_1-(b_3-b_2)E_2 + D \notag 
\end{align}
where $D= \sum_{i\ge 4}(b_i-b_3)E_i$ is effective. Since we have:
\[
C-2N= (3b_3+\epsilon-2)\ell-(b_3-b_1-2)E_1-(b_3-b_2)E_2 + D -\nu_3K_S
\]
and $-\nu_3K_S$ is effective, the divisor $F=(3b_3+\epsilon-2)\ell-(b_3-b_1-2)E_1-(b_3-b_2)E_2$ cannot be effective.  But we have, by \eqref{E:bi2}:
\be\label{ref}
(3b_3+\epsilon-2) \ge (b_3-b_1-2)+(b_3-b_2)
\ee
and therefore,  either $3b_3+\epsilon-2 < 0$, or $b_3-b_1-2<0$ (otherwise $F$ is effective, arguing as in the proof of Lemma \ref{L:ak}).  But since $b_3 \ge 0$ and $b_3-b_2 \ge 0$ 
the following cases may a priori occur. 

 $3b_3+\epsilon-2 < 0$  and then:

\begin{itemize}

\item   $\epsilon=0$, $b_1=b_2=b_3=0$.

\item $\epsilon=1$, $b_1=b_2=b_3=0$.
\item $\epsilon=1$, $b_1=-1$, $b_2=b_3=0$.

\end{itemize}

$3b_3+\epsilon-2 =0$,  $b_3-b_1-2<0$, and $b_3-b_2>0$; then:

\begin{itemize}

\item $\epsilon=2$, $b_1=b_2=-1$, $b_3=0$.

\end{itemize}

The   case $3b_3+\epsilon-2 >0$  cannot occur: in fact the other condition $b_3-b_1-2<0$ implies that $b_3-b_1 \le 1$, and therefore also $b_3-b_2 \le 1$, and any such choice implies that $F$ is effective.

\medskip

Now consider the pencil $|M_C|$ and impose the Petri condition to it. This means that  $h^0(C,K_C-2M_C)=0$, and 
since $-2M$ is ineffective,  this implies that  $h^0(S,C-2M)=0$.  On the other hand:
\begin{align}
C-2M +(\nu_3-2)K_S&=C+\nu_3K_S- 2\sum_{i\ge 9}E_i \notag\\
&= A_{\nu_3}- 2\sum_{i\ge 9}E_i \notag\\
\text{(since $b_3=0$)}\quad &=\epsilon\ell +\sum_{1\le i\le 8}b_iE_i+
\sum_{j\ge 9}(b_j-2)E_j\notag
\end{align}
Assume by contradiction that $b_9 \ge 2$. Then also $b_j \ge 2$ for all $j \ge 9$.  This implies  that $C-2M +(\nu_3-2)K_S$ is effective. But then 

\[
C-2M = [C-2M +(\nu_3-2)K_S] +(\nu_3-2)(-K_S)
\]
is effective as well since $\nu_3=n \ge 2$. Let us finally prove that, under our hypotheses, we have $h\leq18$.
A case-by-case inspection, using equation (\ref{bez}), shows that conditions   $(i)-(iv)$ imply $h\leq18$, with only one exception, namely when
$\epsilon=2$, $\nu_1=\nu_2=n+1$, $\nu_3=n$, $\nu_4=n-1$. In this  case $h=19$. 
In this case, however, from (\ref{{E:3d}}) and from the genus formula, we get $g=10n-6$, 
while, $\deg(M_C)= n+9$. Hence, imposing  that $|M_C|$ is a Brill-Noether pencil, gives $2n+18-10n+4\ge 0$, 
so that $n=2$. As a consequence, $C$ is a degree 8 plane curve of genus 14, having a triple point. 
Thus $C$ is not a Brill-Noether
curve and this case must be excluded.

 \end{proof}
 
\begin{acknowledgements}
We would like to thank Claire Voisin for her encouragement at a crucial stage of our project.
We also thank the referee for a very careful 
 reading and many useful remarks.
\end{acknowledgements}

 \bibliographystyle{amsalpha}
 \bibliography{BPVdV84}

\end{document}